\documentclass[a4paper,11pt]{article}

\topmargin 0in
\oddsidemargin 0.2in
\evensidemargin 0.2in
\textwidth 6.3in
\textheight 8.27in

\usepackage{amssymb, amsmath,amsthm}
\usepackage{amsbsy}
\usepackage{mathtools}
\usepackage{cases}

\usepackage[english]{babel}
\usepackage{accents}
\usepackage{dsfont}
\usepackage{bm}
\usepackage{booktabs}
\usepackage{varwidth}
\usepackage{hyperref}
\hypersetup{
	colorlinks,
	linkcolor={red!50!black},
	citecolor={blue!50!black},
	urlcolor={blue!80!black}
}
\usepackage{etoolbox}
\apptocmd{\sloppy}{\hbadness 10000\relax}{}{}

\usepackage{enumerate}
\usepackage[shortlabels]{enumitem}

\usepackage{diagbox,multirow,arydshln}
\usepackage{array}
\usepackage{hhline}
\usepackage{makecell}
\usepackage{float}
\usepackage{framed}
\usepackage{varwidth}
\usepackage[export]{adjustbox}
\usepackage{colortbl}
\usepackage{sidecap}
\usepackage[section]{placeins}
\usepackage{framed}
\usepackage{caption}
\usepackage{placeins}

\usepackage{imakeidx}

\usepackage{algorithm}
\usepackage{algpseudocode}

\usepackage{graphics}
\usepackage{epsfig}
\usepackage{graphicx}
\usepackage{epstopdf}
\usepackage{color}
\usepackage{subfigure}

\usepackage[dvipsnames,x11names]{xcolor}
\usepackage{xparse}
\usepackage {tikz}
\usetikzlibrary{matrix,chains,positioning,decorations.pathreplacing,arrows}
\usetikzlibrary{cd}
\usetikzlibrary{decorations.pathmorphing,backgrounds,fit}
\usetikzlibrary{decorations.markings}
\usepackage{tikz-network}
\usetikzlibrary{calc}

\colorlet{red2}{red!65!black}
\definecolor{palecerulean}{RGB}{137, 193, 206}
\definecolor{alizarin}{rgb}{0.82, 0.1, 0.26}
\definecolor{amber}{rgb}{1.0, 0.49, 0.0}
\definecolor{harvardcrimson}{rgb}{0.79, 0.0, 0.09}
\definecolor{lava}{rgb}{0.81, 0.06, 0.13}
\definecolor{lightgreen}{rgb}{0.56, 0.93, 0.56}
\definecolor{azure}{rgb}{0.0, 0.5, 1.0}
\definecolor{darkpastelgreen}{rgb}{0.01, 0.75, 0.24}
\definecolor{dartmouthgreen}{rgb}{0.05, 0.5, 0.06}
\definecolor{amethyst}{rgb}{0.6, 0.4, 0.8}
\definecolor{arrow}{RGB}{69, 114, 195}
\definecolor{last-arrow}{RGB}{190,120, 68}
\newcommand{\correct}[1]{{\color{black}{#1}}}


%
\usepackage[auth-sc,affil-sl]{authblk}

\usepackage{todonotes}

\usepackage[style=numeric,backend=bibtex,giveninits=true,maxbibnames=99,sortcites=true]{biblatex} 
\addbibresource{iNETT.bib}

\usepackage{ifthen}
\usepackage{float}
\floatstyle{ruled}
\newfloat{algorithm}{htb}{alg}
\floatname{algorithm}{Algorithm}


\newtheorem{theorem}{Theorem}[section]
\newtheorem{definition}[theorem]{Definition}
\newtheorem{proposition}[theorem]{Proposition}

\newtheorem{remark}[theorem]{Remark}

\numberwithin{equation}{section}


\newcommand{\R}{\mathbb{R}}

\newcommand{\A}{\mathcal{A}}
\newcommand{\Breg}{\mathcal{B}^{\mathcal{R}}}
\newcommand{\F}{\mathcal{F}}

\newcommand{\Reg}{\mathcal{R}}

\newcommand{\argmin}{\operatorname{argmin}}

\newcommand{\dom}{\operatorname{dom}}

\newcommand{\bb}{{\boldsymbol{b}}}

\newcommand{\bu}{{\boldsymbol{u}}}
\newcommand{\bv}{{\boldsymbol{v}}}
\newcommand{\bw}{{\boldsymbol{\omega}}}
\newcommand{\bx}{{\boldsymbol{x}}}
\newcommand{\by}{{\boldsymbol{y}}}
\newcommand{\bz}{{\boldsymbol{z}}}
\newcommand{\bxi}{{\boldsymbol{\xi}}}

\title{Uniformly convex neural networks and non-stationary iterated network Tikhonov (iNETT) method}

\author[]{Davide Bianchi}
\author[]{Guanghao Lai}
\author[]{Wenbin Li \thanks{\bf Corresponding Author: Wenbin Li}}

\affil[]{School of Science, Harbin Institute of Technology, Shenzhen, Shenzhen 518055, China. \textnormal{Email:} {\bf  bianchi@hit.edu.cn,  21s058002@stu.hit.edu.cn, liwenbin@hit.edu.cn}}

\date{}

\begin{document}
\maketitle
\begin{abstract}
We propose a non-stationary iterated network Tikhonov (iNETT) method for the solution of ill-posed inverse problems. The iNETT employs deep neural networks to build a data-driven regularizer, and it avoids the difficult task of estimating the optimal regularization parameter. To achieve the theoretical convergence of iNETT, we introduce uniformly convex neural networks to build the data-driven regularizer. Rigorous theories and detailed algorithms are proposed for the construction of convex and uniformly convex neural networks. In particular, given a general neural network architecture, we prescribe sufficient conditions to achieve a trained neural network which is component-wise convex or uniformly convex; moreover, we provide concrete examples of  realizing convexity and uniform convexity in the modern U-net architecture. With the tools of convex and uniformly convex neural networks, the iNETT algorithm is developed and a rigorous convergence analysis is provided. Lastly, we  show applications of the iNETT algorithm in 2D computerized tomography, where numerical examples illustrate the efficacy of the proposed algorithm.
\end{abstract}

\vspace{0.1cm}
\noindent\small{Keywords: iterated network Tikhonov; \correct{uniformly} convex neural networks;  data-driven regularizer; U-net;  regularization of inverse problem.}

\vspace{0.1cm}
\noindent\small\correct{MSC2020: 47A52; 65F22; 68T07.}
\section{Introduction}\label{sec:introduction}
Consider the discretized form of an ill-posed linear problem,
\begin{equation}\label{model_eq1}
F \bx  = \by,
\end{equation}	
where $X$ and $Y$ denote finite dimensional normed spaces, i.e. $X= \left(\R^N,\|\cdot\|_X\right)$, $Y= \left(\R^M,\|\cdot\|_Y\right)$, and $F : X \to  Y$ is the  discretization of an ill-posed linear operator $\F$. The inverse problem aims to recover $\bx$ from the observed data $\by^\delta$ contaminated by unknown error with bounded norm,
\[
\by^\delta = \by+\boldsymbol{\eta}, \quad \mathrm{where}\ \|\boldsymbol{\eta}\|_Y\leq \delta\,.
\]
As $\by^\delta$ is not necessarily in the range of $F$, i.e. $\by^\delta \notin \operatorname{Rg}(F)$, we consider a variational approach to solve the inverse problem,
\begin{equation}\label{model_eq2}
\bx^\delta\coloneqq \underset{\bx \in X}{\argmin} \|F\bx - \by^\delta  \|^2_Y\,.
\end{equation}

For the ill-posed inverse problem, regularization techniques should be introduced when solving equation (\ref{model_eq2}). The regularization aims to provide prior knowledge and improve stability of the solution. For example, a typical choice of regularization in imaging is the $\ell^p$-norm of $\bx$, with $p\geq 1$, which can be weighted by the Laplacian operator with Dirichlet or Neumann boundary condition \cite{andrews1977digital, engl1996regularization}. Recently, deep learning approaches are introduced to develop data-driven regularization terms in the solutions of inverse problems. In \cite{li2020nett}, the authors propose a network Tikhonov (NETT) approach, which combines deep neural networks with a Tikhonov regularization strategy. The general form of NETT can be summarized as follows,
\begin{equation}\tag*{NETT}\label{NETT}
 x^\delta_\alpha \coloneqq \underset{x \in\dom(\F)\cap \dom\left(\Phi_\Theta\right)}{\argmin} \A(\F x,y^\delta) + \alpha\psi\left(\Phi_{\Theta} (x)\right),
\end{equation}
where $\A(\F x,y^\delta)\geq 0$ is the data-fidelity term which measures misfits between the approximated and measurement data, $\alpha>0$ is a regularization parameter, and $\psi\left(\Phi_\Theta (x)\right)$ is the regularization (or \emph{penalty}) term including a neural network architecture $\Phi_{\Theta}$. In particular, $\psi$ is a nonnegative functional, and the neural network $\Phi_{\Theta}$ is trained to penalize artifacts in the recovered solution. By training $\Phi_{\Theta}$ in an appropriate way, the neural-network based regularization term is able to capture the feature of solution errors due to data noises and the inexact iterative scheme, so that it can provide penalization on the artifacts of solutions in an adaptive manner. This data-driven regularization strategy shows many advantages in solving inverse problems, and related studies can be found in \cite{antholzer2021discretization,antholzer2019nett,obmann2021augmented} and \cite{aggarwal2018modl} as well.

Motivated by \ref{NETT}, we propose an iterated network Tikhonov (iNETT) method which combines the data-driven regularization strategy with an iterated Tikhonov method. In a Tikhonov-like method as \ref{NETT}, the regularization parameter $\alpha$ plays an important role since it controls the trade-off between the data-fidelity term and the regularization term. The value of $\alpha$ relies on the noise level $\delta$, and it will affect the proximity of the recovered solution to the minimizer of the data-fidelity term. Poor choices of $\alpha$ can lead to very poor solutions, and it is well known that an accurate estimate of the optimal $\alpha$ is difficult to achieve and it typically relies on heuristic assumptions (e.g., \cite{hanke1996limitations,hansen1992analysis,reichel2013old}). As a result, a natural strategy is to consider an iterated Tikhonov method with non-stationary values of $\alpha$ in the iteration. The non-stationary iterated Tikhonov method is able to avoid exhaustive tuning of the regularization parameter, and it achieves better convergence rates in many applications. For example, we refer the readers to \cite{engl1987choice, hanke1998nonstationary, donatelli2012nondecreasing, bianchi2015iterated, cai2016regularization, bianchi2017generalized, buccini2017iterated} for the applications of iterated Tikhonov in Hilbert spaces, and \cite{osher2005iterative,bachmayr2009iterative,jin2012nonstationary,jin2014nonstationary} in Banach spaces.

Combining the strategy of neural-network based regularizer with the non-stationary iterated Tikhonov method, the  iNETT method has the following general form,
\begin{equation}\tag*{iNETT}\label{iterNETT}
	\begin{cases}
		\bx^\delta_n:= \underset{\bx \in X}{\argmin} \frac{1}{r}\|F \bx - \by^\delta\|_Y^r + \alpha_n\Breg_{\boldsymbol{\xi}^\delta_{n-1}}(\bx,\bx^\delta_{n-1}),\\
		\boldsymbol{\xi}^\delta_{n}:= \boldsymbol{\xi}^\delta_{n-1} - \frac{1}{\alpha_{n}} F^TJ_r\left(F\bx_n^\delta - \by^\delta\right),\\
		\bx_0 \in X, \, \boldsymbol{\xi}_0 \in \partial \Reg(\bx_0),
	\end{cases} 
\end{equation}
where $\Reg\coloneqq \Phi_{\Theta}^{uc} \colon X\to (\R, |\cdot|)$ is a uniformly convex neural network, $\Breg_{\bxi^\delta_{n-1}}(\cdot,\cdot)$ is the Bregman distance induced by $\Reg$ in the direction $\bxi^\delta_{n-1}\in \partial \Reg(\bx^\delta_{n-1})$, $J_r$ denotes the duality map for $r \in (1,\infty)$, and $\{\alpha_n\}_n$ is a sequence of positive real numbers. The value of $\alpha_n$ controls the amount of regularization, and it plays the role of regularization parameter. By taking a decreasing sequence of $\{\alpha_n\}_n$ and considering the standard discrepancy principle as the stopping rule, the \ref{iterNETT} algorithm can automatically determines the amount of regularization. We will provide the details of \ref{iterNETT} in Section \ref{sec:iNETT}, including a rigorous convergence analysis and many implementation details.

In the formula of \ref{iterNETT}, the neural network $ \Phi_{\Theta}^{uc}$ is employed to build the regularization term, and it is required to be uniformly convex. The property of uniform convexity is demanded in the convergence analysis of the iterated Tikhonov method \cite{jin2014nonstationary}. As a result, another important aspect of the paper is the modeling of convex and uniformly convex neural networks. In Section \ref{sec:notation}, we provide an exact mathematical modeling for the general architecture of neural networks. Our modeling can express the modern convolutional neural networks, where the operations like skip connection and concatenation are included. In Section \ref{sec:ucNN}, we propose rigorous theories for the convex and uniformly convex neural networks. Given a general neural network $\Phi_\Theta : X \to Z$, we prescribe sufficient conditions to obtain a related neural network which is component-wise convex or uniformly convex. The main idea comes from some recent works on convex neural networks, e.g. \cite{amos2017input, sivaprasad2021curious,tan2022data,mukherjee2020learned}, but we largely extend them to build modern architectures which can embrace state-of-the-art neural networks. In Section \ref{sec:example}, we provide particular examples of convex and uniformly convex U-net architectures. The U-net is a convolutional neural network widely used in image processing and related imaging science \cite{ronneberger2015u}. We give rigorous formulas for the U-net architecture, and explain the approaches to obtain convex and uniformly convex U-net architectures according to the general theories proposed in section \ref{sec:ucNN}. In Section \ref{sec:iNETT} and Section \ref{sec:numerical_examples}, we provide implementation details as we employ the convex U-net to build a uniformly convex  regularizer for the \ref{iterNETT} algorithm. The proposed method is successfully applied to computerized tomography in Section \ref{sec:numerical_examples}. The tool of convex and uniformly convex neural networks is actually a by-product when designing the \ref{iterNETT} algorithm, but it seems more interesting than the algorithm itself. The tool of convex neural networks shall have many interesting applications in the future study.

\section{Notation and setting}\label{sec:notation}
We collect here most of the notations and definitions we will use through this work. As main references, the reader can look at \cite{scherzer2009variational,zalinescu2002convex,rockafellar2009variational}. First of all, let us fix $X:= \left(\R^N, \|\cdot \|_X\right)$ and $Y:= \left(\R^M, \|\cdot \|_Y\right)$,  where $N,M \in \mathbb{N}$, and $\|\cdot\|_X$ and $\|\cdot\|_Y$ are some norms on $\R^N$ and $\R^M$, respectively. In the case of standard $\ell^p$ spaces, with $p\geq 1$, then we will indicate the corresponding norm with the usual notation $\|\cdot\|_p$. 

 We will indicate in bold any finite dimensional (column) vector, e.g. $\bx := (x_1,\ldots, x_N)^T \in \R^N$, where $T$ denotes the transpose operation, and we will use the notation $\bx\leq \hat{\bx}$ meaning that $x_i\leq \hat{x}_i$ for every $i=1,\ldots, N$.  With abuse of language, given a real-valued function $\sigma \colon \R^N \to \R$, we will say that $\sigma$ is \emph{monotone nondecreasing} if $\sigma(\bx)\leq \sigma(\hat{\bx})$ for every $\bx\leq \hat{\bx}$. In case of a function with multivariate output, $\sigma \colon \R^N \to \R^D$, we will indicate with $\sigma_d \colon \R^N \to \R$ its components, for $d=1,\ldots, D$.

 For a fixed $\bz \in Z\coloneqq(\R^D,\|\cdot\|_Z)$, we indicate with  $C(\bz,\cdot) : X \to Z\times X$ the ``concatenation'' operator, that is,
\begin{equation}\label{concatenation}
	C(\bz,\bx) \coloneqq (z_1,\ldots, z_D,x_1,\ldots,x_N)^T.
\end{equation}

Fix now a matrix $F : X \to Y$, which is the discretization of an ill-posed linear operator between Banach spaces. We will assume that, given the unperturbed and observed data $\by \in Y$ and  $\by^\delta\in Y$, respectively, then
\begin{equation}\tag{H0}\label{hypothesis0}
\by \in \operatorname{Rg}(F), \quad \mbox{that is}, \quad 	F\bx = \by \quad \mbox{is solvable},
\end{equation}
and
$$\by^\delta=\by+\boldsymbol{\eta}  \quad \mbox{where} \quad \|\boldsymbol{\eta}\|_Y\leq \delta.$$ 

We recall that  a Banach space $Y$ is uniformly smooth if its modulus of smoothness
\begin{equation*}
	\rho(\tau) \coloneqq \sup\left\{\frac{\|\by + \tau\hat{\by}\|+\|\by - \tau\hat{\by}\|}{2}-1  \mid \|\by\|=\|\hat{\by}\|=1\right\}, \quad \tau>0
\end{equation*} 
satisfies $\lim_{\tau\to 0^+}\rho(\tau)/\tau=0$.  Examples of uniformly smooth spaces are all the $\ell^p$-spaces for $p\in(1,\infty)$.

Given an extended real-valued function, $\Reg  \colon \dom(\Reg)\subseteq X \to (-\infty,+\infty]$, then $\Reg$ is \textit{uniformly convex} if there exists a nonnegative map $h\colon [0,+\infty)\to [0,+\infty]$ such that $h(s)=0$ if and only if $s=0$ and 
\begin{equation*}\label{eq:uni_convex}
	\Reg(t\bx + (1-t)\hat{\bx}) + t(1-t)h(\|\bx-\hat{\bx}\|_X) \leq t\Reg(\bx)+ (1-t)\Reg(\hat{\bx}), \qquad \forall t\in[0,1]\mbox{ and } \forall\bx,\hat{\bx} \in \dom(\Reg).
\end{equation*}

Finally, we recall that $\Reg$ is \emph{coercive} if it is bounded below on bounded sets and 
\begin{equation*}
	\liminf_{\|\bx\|_X\to \infty}\frac{\Reg(\bx)}{\|\bx\|_X}=\infty.
\end{equation*}

\subsection{Bregman distance} Given a convex function $$\Reg : \dom\left(\Reg\right)\subseteq X \to (-\infty,+\infty],$$ $\Reg$ is called \emph{proper} if $\dom\left(\Reg\right):= \left\{\bx \in X \, : \, \Reg(\bx)<+\infty\right\}\neq \emptyset$. For every $\hat{\bx} \in \dom\left(\Reg\right)$, a \emph{subgradient} of $\Reg$ at $\hat{\bx}$ is an element $\bxi$ of the dual space $X^*$ such that
\begin{equation*}
	  \Reg(\bx) - \Reg(\hat{\bx}) - \langle \bxi, \bx-\hat{\bx}\rangle \geq 0 \quad \forall \bx \in X,
\end{equation*}
where the bracket is the evaluation of $\bxi$ at $\bx-\hat{\bx}$. Clearly, since $X$ is finite dimensional, then $X$ is reflexive and $\langle \cdot, \cdot \rangle$ is the standard inner product, that is,
\begin{equation*}
	\langle \bxi, \bx-\hat{\bx}\rangle = \sum_{i=1}^N \xi_i(x_i-\hat{x}_i).
\end{equation*}
The collection of all subgradients of $\Reg$ at $\hat{\bx}$ is denoted by $\partial \Reg (\hat{\bx})$. The \emph{subdifferential} of $\Reg$ is the multi-valued map $\partial \Reg : \dom\left(\partial \Reg\right)\subseteq X \to 2^{X^*}$ such that
\begin{align*}
	& \dom\left(\partial \Reg\right):= \left\{\hat{\bx} \in \dom(\Reg) \, : \, \partial \Reg (\hat{\bx})\neq \emptyset \right\},\\
	& \hat{\bx} \mapsto \partial \Reg(\hat{\bx}).
\end{align*}
Let us recall that if $\dom(\Reg)=X$, then $\dom\left(\partial \Reg\right)=X$ (e.g. \cite[Lemma 3.16]{scherzer2009variational}).

Finally, for every $\hat{\bx} \in\dom\left(\partial \Reg\right)$ and $\bxi \in \partial \Reg (\hat{\bx})$, the \emph{Bregman distance} $\Breg_\bxi(\cdot,\hat{\bx}): X \to [0,+\infty)$ induced by $\Reg$ at $\hat{\bx}$ in the direction $\bxi$ is defined by
$$
\Breg_\bxi(\bx,\hat{\bx}):= \Reg(\bx) - \Reg(\hat{\bx}) - \langle \bxi, \bx-\hat{\bx}\rangle.
$$
\begin{remark}\label{rem:breg_coercive}
It is straightforward to check that if $\Reg$ is uniformly convex then $\Breg_\bxi(\cdot,\hat{\bx})$ is uniformly convex too, for any fixed $\bxi$ and $\hat{\bx}$. Moreover, since $X$ is reflexive, then $\Breg_\bxi(\cdot,\hat{\bx})$ is coercive. See for example \cite[Corollary 2.4]{zalinescu1983uniformly}
\end{remark}
We can now introduce the definition of solution of the model problem \eqref{model_eq1}, with respect to the Bregman distance from a reference initial guess.
\setcounter{theorem}{0}
\begin{definition}\label{def:R-minimizing_sol}
	Fix $\bx_0 \in \dom\left(\partial \Reg\right)$, $\bxi_0 \in\partial \Reg (\bx_0)$. 	An element $\bx^\dagger \in  \dom(\Reg)$ is called a $\Breg_{\bxi_0}$-\emph{minimizing} solution of \eqref{model_eq1} if $F\bx^\dagger=\by$ and 
	\begin{equation*}
		\Breg_{\bxi_0}(\bx^\dagger,\bx_0) = \min \left\{  \Breg_{\bxi_0}(\bx,\bx_0) \, : \, \bx \in \dom(\Reg),\, F\bx=\by \right\}.
	\end{equation*}
\end{definition}

 As a last piece of notation, we introduce the \emph{duality map}. For every fixed $r\in (1,\infty)$, the duality map $J_r \colon X \to 2^{X^*}$ is given by
 \begin{equation*}
 	J_r(\bx) \coloneqq \{ \bxi \in X^* \mid \|\bxi\|=\|\bx\|_X^{r-1} \mbox{ and } \langle \bxi, \bx\rangle =\|\bx\|_X^r \}.
 \end{equation*}
 In particular, $J_r$ is the subdifferential of the map $\bx\mapsto \frac{\|\bx\|_X^r}{r}$. 
 
 \begin{remark}\label{rem:J_r}
  If $X$ is an $\ell^p$ space, then $J_r$ is single-valued and for $r=2$ it holds 
 \begin{equation*}
 	J_2(\bx) = \operatorname{sgn}(\bx)\frac{|\bx|^{p-1}}{\|\bx\|_2^{p-2}}.
 \end{equation*} 
 More generally, if a Banach space is uniformly smooth, then $J_r$ is single-valued.
 \end{remark}
 In view of the above remark and for the well-posedness of the \ref{iterNETT} method (see Section \ref{sec:iNETT}), we will assume that
 \begin{equation}\tag{H1}\label{hypothesis1}
 	Y \mbox{ is uniformly smooth}.
 \end{equation}

\subsection{Neural networks}\label{sec:DNN}

A neural network is a chain of compositions of  affine operators and  nonlinear operators. For an introduction to neural networks from an applied mathematical point of view, we refer to \cite{higham2019deep}, whereas we refer to \cite{arridge2019solving} for a focus on deep learning techniques for inverse problems.  We present here the basic architecture upon which we will devise the neural networks to be implemented in \ref{iterNETT}. There are several many choices for the linear and the nonlinear operators, and each of them generate a different neural network. We do not focus now on those choices, which will be made only later (see  Sections \ref{sec:example}, \ref{subsec:CovRegul} and \ref{sec:numerical_examples}), to keep here a more general setting.

First, fix a \emph{set of parameters} $\Theta:=\{\bb^k;A^{k,j_k};W^k\}_{k=1}^{L},$ where $\bb^k$ are vectors commonly called \textit{bias} terms, and $A^{k,j_k}$ and $W^k$ are matrices. Second, fix a collection $\{\sigma^k\}_{k=1}^{L}$ of possibly nonlinear operators.

Finally, define  $\Phi_\Theta: X \to Z$ such that 
\begin{equation}\tag{NN}\label{DNN}
	\begin{cases}
		\Phi_\Theta(\bx)\coloneqq \bz \in Z &\mbox{where } \bz= \bz^{L+1},\\
		\bz^{k+1}= \sigma^{k}\left(\hat{\bb}^{k} +  W^{k}\hat{\bz}^{k}\right) \in Z^{k+1} &\mbox{for }k=1,\ldots,L,\\
		 \hat{\bz}^{k}= C^k(\bz^k)\coloneqq 
		\begin{cases}
			\bz^k \mbox{ or} \\
			C(\bz^{i_k},\bz^k)
		\end{cases} & \mbox{for } i_k\in\{1,\ldots,k\},\\
	\hat{\bb}^{k}	= \bb^{k} + A^{k,j_{k}}\bz^{j_{k}} & \mbox{for } j_{k}\in \{1,\ldots, k\},\\
		\bz^1\coloneqq \bx \in X, 
	\end{cases}
\end{equation} 
where $Z^{L+1}=Z\coloneqq(\R^D,\|\cdot\|_Z)$ and $Z^k\coloneqq (\R^{D_k},\|\cdot\|_{Z_k})$ are  finite dimensional normed vector spaces, and $C(\cdot,\cdot)$ is the concatenation operator \eqref{concatenation}. 
The operators $C(\bz^{i_{k}},\cdot)$ and $A^{k,j_k}$  represent the \emph{skip connections}, that is, some of the data in the previous iterations are used in future iterations, skipping intermediate steps. 
See Figure \ref{fig:DNN} for a visual representation. The integer $L$ is referred to as the \emph{depth} of the neural network, and $\sigma^{k}\left(\hat{\bb}^{k} +  W^{k}\hat{\bz}^{k}\right)$ as the $k$-th \emph{layer}. When $L>2$, then the neural network is commonly called \emph{deep neural network}. 

The set $\Theta$ is made by the disjoint union of two subsets, the set of \emph{free parameters} $\Theta_{free}$ and the set of \emph{frozen parameters} $\Theta_{frozen}$, that is
$$
\Theta= \Theta_{free}\sqcup \Theta_{frozen}.
$$
The set of free parameters $\Theta_{free}$ is typically initialized to a starting set of values and then it is \textit{trained}  by minimizing a loss function over a training sample.  Vice-versa, the set of frozen parameters $\Theta_{frozen}$ is fixed and unaffected by the training process. For example, some of the matrices $W^k$ can be fixed to be the identity matrix $I$ or the bias terms $\bb^k$ to be the zero vector $\boldsymbol{0}$. $\Theta_{frozen}$ can be empty, that is, all the parameters are trainable. About the specific training strategy we will employ, see Subsection \ref{subsec:CovRegul}.

In the case that $A^{k,j_k}$ is fixed to be the zero matrix and $\hat{\bz}^{k}= \bz^{k}$, for every $k$, then we have a \emph{feedforward} neural network. For examples of simple architectures of feedforward neural networks of convolutional type, see \cite{chen2017low,morotti2021green}.  For examples of more involved neural networks described by \eqref{DNN}, see ResNet \cite{he2016deep}, DenseNet \cite{huang2017densely} and U-Net \cite{ronneberger2015u}.

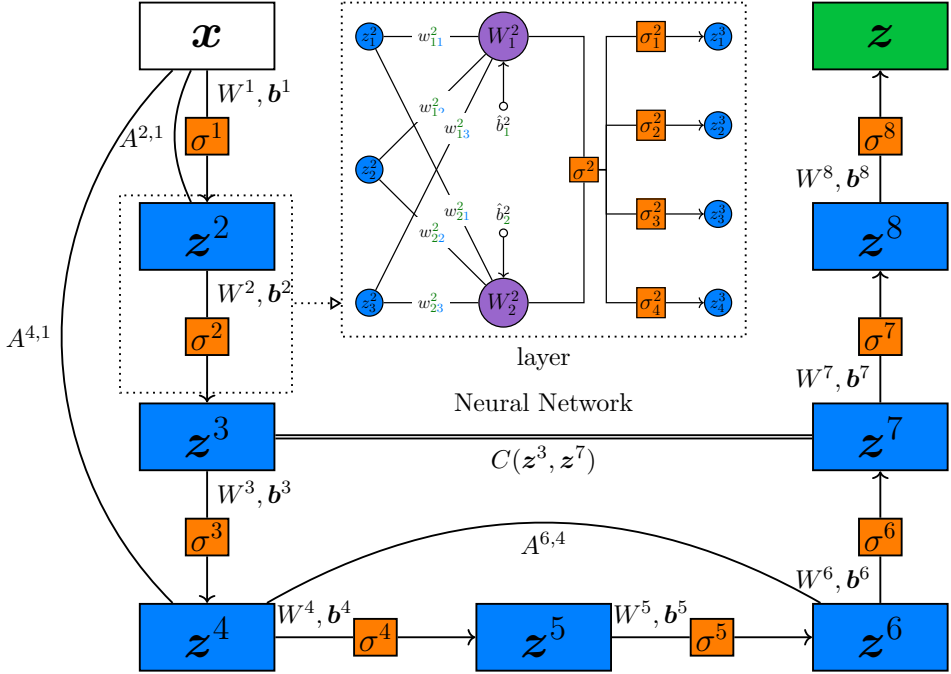
\begin{figure}[!tbh]
		\begin{adjustbox}{max totalsize={.9\textwidth}{.9\textheight},center}%
		\begin{tikzpicture}[auto,node distance =3 cm ,on grid ,
			thick ,
			square1/.style={
				draw,
				inner sep=2pt,
				font=\Large,
				top color = amber, 
				bottom color = amber
			},
			square2/.style={
				draw,
				minimum width=2cm,
				minimum height=1cm,
				font=\huge,
				top color = azure, 
				bottom color = azure
			},
			input/.style ={draw, circle, top color = darkpastelgreen, bottom color =darkpastelgreen, black, text=black,inner sep=0.5pt},
			ghost/.style ={ circle ,top color =white , bottom color = white ,
				draw,white , text=white , minimum width =1 cm},
			bias/.style ={ circle ,top color =white , bottom color = white ,
				draw, white , text=black , inner sep=0.5pt},
			output/.style ={draw, circle, top color = amethyst, bottom color =amethyst, black, text=black,inner sep=0.5pt},
			state/.style={draw,
				circle,
				inner sep=2pt,
				font=\huge,
				top color = azure, 
				bottom color = azure, 		
				draw,
				black,
				text=black}]

			\node[square2, top color=white, bottom color=white] (0) [] {$\bx$};
			\node[square1] (1/2) [below =1.5cm of 0] {$\sigma^1$};
			\node[square2] (1) [below =1.5cm of 1/2] {$\bz^2$};
			\node[square1] (3/2) [below =1.5cm of 1] {$\sigma^2$};
			\node[square2] (2) [below =1.5cm of 3/2] {$\bz^3$};
			\node[square1] (5/2) [below =1.5cm of 2] {$\sigma^3$};
			\node[square2] (3) [below =1.5cm of 5/2] {$\bz^4$};
			\node[square1] (7/2) [right =2.5cm of 3] {$\sigma^4$};
			\node[square2] (4) [right =2.5cm of 7/2] {$\bz^5$};
			\node[square1] (9/2) [right =2.5cm of 4] {$\sigma^5$};
			\node[square2] (5) [right =2.5cm of 9/2] {$\bz^6$};
			\node[square1] (11/2) [above =1.5cm of 5] {$\sigma^6$};
			\node[square2] (6) [above =1.5cm of 11/2] {$\bz^7$};
			\node[square1] (13/2) [above =1.5cm of 6] {$\sigma^7$};
			\node[square2] (7) [above =1.5cm of 13/2] {$\bz^8$};
			\node[square1] (15/2) [above =1.5cm of 7] {$\sigma^8$};
			\node[square2, top color=darkpastelgreen, bottom color=darkpastelgreen] (8) [above =1.5cm of 15/2] {$\bz$};
			
			\path (0) edge [black,-] node[right] {$W^1, \bb^1$} (1/2);
			\path (1/2) edge [black,->] node[right] {} (1);
			\path (1) edge [black,-] node[right] {$W^2, \bb^2$} (3/2);
			\path (3/2) edge [black,->] node[right] {} (2);
			\path (2) edge [black,-] node[right] {$W^3, \bb^3$} (5/2);
			\path (5/2) edge [black,->] node[right] {} (3);
			
			\path (0) edge [black,-,bend right=25] node[left] {$A^{2,1}$} (1);
			\path (0) edge [black,-,bend right=45] node[left] {$A^{4,1}$} (3);

			\path (3) edge [black,-] node[above] {$W^4, \bb^4$} (7/2);
			\path (7/2) edge [black,->] node[below] {} (4);
			\path (4) edge [black,-] node[above] {$W^5,\bb^5$} (9/2);
			\path (9/2) edge [black,->] node[below] {} (5);
			
			\path (5) edge [black,-] node[left] {$W^6, \bb^6$} (11/2);
			\path (11/2) edge [black,->] node[left] {} (6);
			\path (6) edge [black,-] node[left] {$W^7, \bb^7$} (13/2);
			\path (13/2) edge [black,->] node[left] {} (7);
			\path (7) edge [black,-] node[left] {$W^8, \bb^8$} (15/2);
			\path (15/2) edge [black,->] node[left] {} (8);
			
			\path (3) edge [black,-, bend left= 30] node[below] {$A^{6,4}$} (5);
			
			\path (2) edge [black,double,-] node[below] {$C(\bz^3,\bz^7)$} (6);

			\node at (5,-2) [] (layer) {
				\begin{adjustbox}{max totalsize={.35\textwidth}{.35\textheight},center}
					\begin{tikzpicture}[auto,node distance =3 cm ,on grid ,
						thick ,
						square/.style={
							draw,
							inner sep=2pt,
							font=\Large,
							top color = amber, 
							bottom color = amber
						},
						input/.style ={draw, circle, top color = azure, bottom color =azure, black, text=black,inner sep=0.5pt},
						ghost/.style ={ circle ,top color =white , bottom color = white ,
							draw,white , text=white , minimum width =1 cm},
						bias/.style ={circle,top color =white , bottom color = white ,
							draw, white , text=black , inner sep=0.5pt},
						output/.style ={draw, circle, top color = azure, bottom color =azure, black, text=black,inner sep=0.5pt},
						state/.style={draw,
							circle,
							inner sep=2pt,
							font=\Large,
							top color = amethyst, 
							bottom color = amethyst, 		
							draw,
							black,
							text=black}]
						
						\node[state] (E) [] {$W^2_1$};
						\node[ghost] (F) [below =of E] {};
						\node[state] (G) [below =of F] {$W^2_2$};
						
						\node[input] (H) [left =of F] {$z^2_2$};
						\node[input] (I) [above =of H] {$z^2_1$};
						\node[input] (L) [below =of H] {$z^2_3$};
						
						\node[ghost] (A) [right =1.5cm of E] {};
						\node[ghost] (B) [right =1.5cm of G] {};

						\node[ghost] (C) [right =0.3cm of A] {};
						\node[square] (D) [below = of C] {$\sigma^2$};
						
						\node[square] (S1) [right =1.5cm of C] {$\sigma^2_1$};
						\node[square] (S2) [below =2cm of S1] {$\sigma^2_2$};
						\node[square] (S3) [below =2cm of S2] {$\sigma^2_3$};
						\node[square] (S4) [below =2cm of S3] {$\sigma^2_4$};

						\node[output] (Y1) [right =1.5cm of S1] {$z^3_1$};
						\node[output] (Y2) [below =2cm of Y1] {$z^3_2$};
						\node[output] (Y3) [below =2cm of Y2] {$z^3_3$};
						\node[output] (Y4) [below =2cm of Y3] {$z^3_4$};
						
						\node[bias] (Eb) [below  =2cm of E] {$\hat{b}^2_{\textcolor{dartmouthgreen}{1}}$};
						\node[bias] (Gb) [above  =2cm of G] {$\hat{b}^2_{\textcolor{dartmouthgreen}{2}}$};
						
						\path (H) edge [black,-] node[fill=white, anchor=center, pos=0.5,font=\bfseries] {$w^{\textcolor{dartmouthgreen}{2}}_{\textcolor{dartmouthgreen}{1}\textcolor{azure}{2}}$} (E);
						\path (H) edge [black,-] node[fill=white, anchor=center, pos=0.5,font=\bfseries] {$w^{\textcolor{dartmouthgreen}{2}}_{\textcolor{dartmouthgreen}{2}\textcolor{azure}{2}}$} (G);
						
						\path (I) edge [black,-] node[fill=white, anchor=center, pos=0.5,font=\bfseries] {$w^{\textcolor{dartmouthgreen}{2}}_{\textcolor{dartmouthgreen}{1}\textcolor{azure}{1}}$} (E);
						\path (I) edge [black,-] node[fill=white, anchor=center, pos=0.70,font=\bfseries] {$w^{\textcolor{dartmouthgreen}{2}}_{\textcolor{dartmouthgreen}{2}\textcolor{azure}{1}}$} (G);
						
						\path (L) edge [black,-] node[fill=white, anchor=center, pos=0.70,font=\bfseries] {$w^{\textcolor{dartmouthgreen}{2}}_{\textcolor{dartmouthgreen}{1}\textcolor{azure}{3}}$} (E);
						\path (L) edge [black,-] node[fill=white, anchor=center, pos=0.5,font=\bfseries] {$w^{\textcolor{dartmouthgreen}{2}}_{\textcolor{dartmouthgreen}{2}\textcolor{azure}{3}}$} (G);

						\draw[black,-] (E) -| (D);
						\draw[black,-] (G) -| (D);

						\draw[black,-] (D) -- ($(D)+(0.5,0)$) |-  (S1);
						\draw[black,-] (D) -- ($(D)+(0.5,0)$) |-  (S2);
						\draw[black,-] (D) -- ($(D)+(0.5,0)$) |-  (S3);
						\draw[black,-] (D) -- ($(D)+(0.5,0)$) |-  (S4);
						
						\path (S1) edge [black,->] node[] {} (Y1);
						\path (S2) edge [black,->] node[] {} (Y2);
						\path (S3) edge [black,->] node[] {} (Y3);
						\path (S4) edge [black,->] node[] {} (Y4);

						\path (Eb) edge [black,o->] node[] {} (E);
						\path (Gb) edge [black,o->] node[] {} (G);
					\end{tikzpicture}
				\end{adjustbox}
			};
			
			\draw[thick,dotted]     ($(1.north west)+(-0.28,0.10)$) rectangle ($(3/2.south east)+(0.93,-0.55)$);
			
			\draw[thick,dotted]     ($(5,-2)+(-3,2.5)$) rectangle ($(5,-2)+(3,-2.5)$);
			\node[below=0.5pt] (S) at (layer.south) {layer};
			\draw[thick,dotted,-{Triangle[open]}] ($(1)+(1.3,-1)$) -- ($(1)+(+2,-1)$);

			\node[below=0.7cm] (S) at (layer.south) {Neural Network};	
			
		\end{tikzpicture}
	\end{adjustbox}
	\caption{Example of a simple neural network architecture of depth $L=8$, with input $\bx$ and output $\bz$ described by the white and green rectangles, respectively. As it can be visually checked, the operators $A^{2,1}$, $A^{4,1}$ $A^{6,4}$, and $C(\bz^3,\bz^7)$ bring over data of previous iterations skipping some connections. \correct{To differentiate the concatenation $C(\cdot,\cdot)$ from the other matrix-vector operators, we use a double edge to denote it.} Inside the dotted square, it is depicted a blow-up of the second layer: The rows of the matrix $W^2=(w^2_{ij})\in \R^{2\times 3}$ are represented by the violet circles. Note that in that layer the bias term $\hat{\bb}^2$ is given by $\hat{\bb}^2=\bb^2+ A^{2,1}\bx$.}\label{fig:DNN}
\end{figure}

\section{Convex and uniformly convex neural networks}\label{sec:ucNN}

We provide some sufficient conditions to guarantee that a neural network, with the architecture described by \eqref{DNN} in Subsection \ref{sec:DNN}, is component-wise convex or component-wise uniformly convex. In particular, given any neural network of depth $L$, it is possible to generate a component-wise uniformly convex neural network of depth $L+1$ which embeds the original neural network.

Fix a neural network of depth $L$, $\Phi_\Theta: X \to Z$,  and consider the following constraints:
\begin{align}\tag{C1}\label{hypothesis2}
 	&	\sigma_d^k \mbox{ is continuous, convex and monotone nondecreasing} \quad  \forall d=1,\ldots,D_k,\, \forall k=1,\dots,L;\\
 	\tag{C2}\label{hypothesis3}
 	&\begin{cases}
 		W^k \mbox{ has nonnegative entries } \forall k=2,\dots,L,\\
 		A^{k,j_k} \mbox{ has nonnegative entries } \forall j_k\geq 2.
 	\end{cases}
 \end{align}
 In the next proposition we show that $\Phi_{\Theta}$ is component-wise convex. This result is a generalization of the input convex neural networks presented in \cite{amos2017input}, but we largely extend them to build modern architectures which can embrace state-of-the-art neural networks.

\begin{theorem}\label{prop:DNN_convex}
Under the constraints \eqref{hypothesis2} and	\eqref{hypothesis3}, any neural network described by the set of equations \eqref{DNN} is continuous and component-wise convex.  
\end{theorem}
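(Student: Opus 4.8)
The plan is to proceed by induction on the layer index $k$, proving that every component of the intermediate output $\bz^k$, regarded as a function of the input $\bx$, is convex (with the understanding that affine functions are a special, degenerate case of convex ones). Two elementary facts about convexity drive the argument: first, a nonnegative linear combination of convex functions is convex, whereas affine functions are convex regardless of the sign of their coefficients; second, the composition rule stating that if $f\colon\R^m\to\R$ is convex and nondecreasing in each coordinate and $g_1,\dots,g_m$ are convex, then $\bx\mapsto f(g_1(\bx),\dots,g_m(\bx))$ is convex. This composition rule is precisely the point where the monotonicity hypothesis in \eqref{hypothesis2} is used.

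For the base case $k=1$ we have $\bz^1=\bx$, whose components are affine and hence convex. For the inductive step, assume that all components of $\bz^1,\dots,\bz^k$ are convex in $\bx$. First I would observe that the concatenation step is harmless: since $C(\bz^{i_k},\bz^k)$ merely stacks the entries of $\bz^{i_k}$ (with $i_k\le k$) on top of those of $\bz^k$, the components of $\hat\bz^k$ are among the components of $\bz^1,\dots,\bz^k$ and are therefore convex by hypothesis. Next I would examine the argument $\hat\bb^k+W^k\hat\bz^k = \bb^k + A^{k,j_k}\bz^{j_k}+W^k\hat\bz^k$ entrywise: each entry is the constant $b^k_i$, plus a linear combination of components of $\bz^{j_k}$ (with $j_k\le k$) with coefficients from $A^{k,j_k}$, plus a linear combination of components of $\hat\bz^k$ with coefficients from $W^k$. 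Here is where the two exceptional cases in \eqref{hypothesis3} enter: when $j_k=1$ the term $A^{k,1}\bz^1=A^{k,1}\bx$ is affine, and when $k=1$ the term $W^1\hat\bz^1$ is built from $\bx$ alone and is again affine, so no sign restriction is needed in these cases; whereas for $j_k\ge 2$ and $k\ge 2$ the relevant components are genuinely convex, so the nonnegativity of the entries of $A^{k,j_k}$ and $W^k$ guarantees that the linear combinations remain convex. In every case each entry of the argument is convex.

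With the argument shown to be component-wise convex, I would close the induction by invoking the composition rule: since by \eqref{hypothesis2} each $\sigma^k_d$ is convex and monotone nondecreasing, and each entry of $\hat\bb^k+W^k\hat\bz^k$ is convex, the component $z^{k+1}_d=\sigma^k_d(\hat\bb^k+W^k\hat\bz^k)$ is convex; taking $k=L$ yields convexity of every component of $\bz^{L+1}=\Phi_\Theta(\bx)$. Continuity follows along the same induction, since each $\sigma^k_d$ is continuous by \eqref{hypothesis2} and affine maps are continuous, so that every composition is continuous. The one point demanding care—rather than a genuine obstacle—is the bookkeeping that separates the affine contributions (from the input layer $k=1$ and from skip connections carrying $\bz^1=\bx$, i.e.\ $j_k=1$) from the strictly convex ones, since it is exactly this distinction that explains why the nonnegativity constraints in \eqref{hypothesis3} are imposed only for $k\ge 2$ and $j_k\ge 2$.
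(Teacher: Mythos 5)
Your proposal is correct and follows essentially the same route as the paper: an induction over layers (the paper phrases it as induction on the depth, viewing each intermediate $\bz^{k+1}$ as a sub-network with the same input), combined with the observation that nonnegative combinations of convex functions stay convex, that the $j_k=1$ and $k=1$ contributions are affine and hence need no sign restriction, and that composing with a convex, componentwise nondecreasing $\sigma^k_d$ preserves convexity. The bookkeeping you flag about concatenation and the exceptional affine cases is exactly what the paper's proof carries out explicitly.
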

\begin{proof}
$\Phi_{\Theta}$ is trivially continuous because composition of continuous  functions. We want to show that given $\Phi_\Theta=(\Phi_{\Theta,1},\ldots, \Phi_{\Theta,D})^T$, then
\begin{equation*}
	\Phi_{\Theta,d}(t\bx +(1-t)\bw)\leq t\Phi_{\Theta,d}(\bx) + (1-t)\Phi_{\Theta,d}(\bw) \quad \mbox{for every }d=1,\ldots,D \mbox{ and }\bx,\bw\in X.
\end{equation*}
We proceed by induction. Fix $L=1$ and $\bz^1\coloneqq  t\bx +(1-t)\bw \in X$: Then, by \eqref{DNN},  it holds that
\begin{equation*}
	\Phi_{\Theta,d}(\bz^1)=\sigma^1_d\left(\hat{\bb}^{1} +  W^{1}\hat{\bz}^1\right)= \sigma^1_d\left(\bb^{1} + A^{1,1}\bz^1 +  W^{1}C^1(\bz^1)\right). 
\end{equation*}  
Let us remember that $C^1$ is such that
$$
\begin{cases}
	C^1 : X \to X,\\
	C^1(\bx)=\bx,
\end{cases}\quad \mbox{or} \quad \begin{cases}
C^1 : X \to X\times X,\\
C^1(\bx)=C(\bx,\bx).
\end{cases}
$$
In the latter case, 
\begin{align*}
	C^1(\bz^0)&= (t\bx +(1-t)\bw, t\bx +(1-t)\bw)^T\\
	&= t(\bx,\bx)^T+ (1-t)(\bw,\bw)^T\\
	&=tC(\bx,\bx)+(1-t)C(\bw,\bw)\\
	&=tC^1(\bx) + (1-t)C^1(\bw).
\end{align*}
Then, by the linearity of $A^{1,1}$ and $W^1$, and independently of the choice of $C^1$, it is clear that
\begin{align*}
	\bb^{1} + A^{1,1}\bz^1 +  W^{1}C^1(\bz^1)&= t(\bb^{1} + A^{1,1}\bx +  W^1C^1(\bx)) + (1-t)(\bb^{1} + A^{1,1}\bw + W^1C^1(\bw))\\
	&= t\bu + (1-t)\bv, 
\end{align*}
where
$$
\bu\coloneqq\bb^{1} + A^{1,1}\bx +  W^1C^1(\bx), \qquad \bv\coloneqq\bb^{1} + A^{1,1}\bw + W^1C^1(\bw).
$$
Using now the convexity of $\sigma^1_d$, we obtain 
\begin{align*}
	\Phi_{\Theta,d}(t\bx +(1-t)\bw) &=\sigma^1_d\left(\bb^{1} + A^{1,1}\bz^1 +  W^{1}C^1(\bz^1)\right)\\ &= \sigma^1_d\left( t\bu + (1-t)\bv\right)\\
	&\leq t\sigma^1_d(\bu) + (1-t)\sigma^0_d(\bv)\\
	&= t\Phi_{\Theta,d}(\bx) + (1-t)\Phi_{\Theta,d}(\bw),
\end{align*}
namely, $\Phi_\Theta$ is component-wise convex. Assume now that every neural network $\Phi_{\Theta}$ described by \eqref{DNN} is component-wise convex when the depth $L$ is $L=1,\ldots, n-1$, for any fixed $n\geq2$. We need to prove that $\Phi_{\Theta}$ is component-wise convex when $L=n$. Fix again $\bz^1\coloneqq t\bx +(1-t)\bw$. Observe that, at each step $k=1,\ldots,L-1$, every $\bz^{k+1}$ in \eqref{DNN} is a neural network $\Phi_{\Theta}^{k} : X \to Z^{k+1}$ of depth $L_{k}\leq L-1$ and with the same initial input $\bz^1$, that is $\bz^{k+1}=\Phi_{\Theta}^{k}(\bz^1)$. Then, by the induction hypothesis,
\begin{equation}\label{eq:induction}
\Phi_{\Theta}^{k}(\bz^1)=\Phi_{\Theta}^{k}(t\bx +(1-t)\bw)\leq t\Phi_{\Theta}^{k}(\bx) +(1-t)\Phi_{\Theta}^{k}(\bw) \qquad \forall\, k=1,\ldots,L-1.
\end{equation}
So,
\begin{align*}
	\Phi_{\Theta,d}(\bz^1)=\bz^{L+1}&= \sigma^{L}_d\left(\hat{\bb}^{L} +  W^{L}\hat{\bz}^{L}\right)\\
	&= \sigma^{L}_d\left(\bb^{L} + A^{L,j_{L}}\bz^{j_{L}} +  W^{L}C^{L}(\bz^{L})\right),\\
	&= \sigma^{L}_d\left(\bb^{L} + A^{L,j_{L}}\Phi_{\Theta}^{j_{L}-1}(\bz^1) +  W^{L}C^{L}(\Phi_{\Theta}^{L-1}(\bz^1))\right).
\end{align*} 
We need to take care of the concatenation operator. Again, there are two cases: $$
\begin{cases}
	C^{L}\colon  Z^{L} \to Z^{L},\\
C^{L}(\Phi_{\Theta}^{L-1}(\bz^1)) = \Phi_{\Theta}^{L-1}(\bz^1),	
\end{cases}\quad \mbox{or} \quad 
\begin{cases}
C^{L}\colon  Z^{L} \to Z^{i_{L}}\times Z^{L},\\
C^{L}(\Phi_{\Theta}^{L-1}(\bz^1))=C(\Phi_{\Theta}^{i_{L}-1}(\bz^1),\Phi_{\Theta}^{L-1}(\bz^1)),
\end{cases}
$$
where $i_{L} \in \{1,\ldots,L\}$, and with the convention that $\Phi_{\Theta}^0(\bz^1)=\bz^1$. Clearly, 
$$
C(\bu_1,\bu_2)\leq C(\bv_1,\bv_2) \quad \mbox{for every}\quad \bu_1\leq \bv_1,\;  \bu_2\leq \bv_2.
$$
Therefore, if we are in the latter case, by  \eqref{eq:induction}
\begin{align*}
	C(\Phi_{\Theta}^{i_{L-1}}(\bz^1),\Phi_{\Theta}^{L-1}(\bz^1)) &= C(\Phi_{\Theta}^{i_{L-1}}(t\bx +(1-t)\bw),\Phi_{\Theta}^{L-1}(t\bx +(1-t)\bw))\nonumber\\
	&\leq C(t\Phi_{\Theta}^{i_{L-1}}(\bx) + (1-t)\Phi_{\Theta}^{i_{L-1}}(\bw),t\Phi_{\Theta}^{L-1}(\bx) + (1-t)\Phi_{\Theta}^{L-1}(\bw))\nonumber\\
	&=tC(\Phi_{\Theta}^{i_{L-1}}(\bx),\Phi_{\Theta}^{L-1}(\bx)) + (1-t)C(\Phi_{\Theta}^{i_{L-1}}(\bw),\Phi_{\Theta}^{L-1}(\bw))\\
	&= tC^{L}(\Phi_{\Theta}^{L-1}(\bx)) +(1-t)C^{L}(\Phi_{\Theta}^{L-1}(\bw)).
\end{align*}
As a consequence, independently from the case we are in,
\begin{equation*}
C^{L}(\Phi_{\Theta}^{L-1}(\bz^1)) \leq tC^{L}(\Phi_{\Theta}^{L-1}(\bx)) +(1-t)C^{L}(\Phi_{\Theta}^{L-1}(\bw)),	
\end{equation*} 
and then, by \eqref{hypothesis3},
\begin{equation}\label{prop1:eq1}
	W^{L}C^{L}(\Phi_{\Theta}^{L-1}(\bz^1))\leq tW^{L}C^{L}(\Phi_{\Theta}^{L-1}(\bx)) + (1-t)W^{L}C^{L}(\Phi_{\Theta}^{L-1}(\bw)).
\end{equation}
Now, instead, if $j_{L}=1$, then $\Phi_{\Theta}^{j_{L}-1}(\bz^1)=\bz_1$ and by linearity
\begin{equation}\label{prop1:eq2}
 A^{L,j_{L}}\Phi_{\Theta}^{j_{L}-1}(\bz^1)=tA^{L,j_{L}}\bx + (1-t)A^{L,j_{L}}\bw.
\end{equation}
If $j_{L}\geq 2$, by \eqref{eq:induction} and \eqref{hypothesis3}, 
\begin{equation}\label{prop1:eq3}
 A^{L,j_{L}}\Phi_{\Theta}^{j_{L}-1}(\bz^1)\leq tA^{L,j_{L}}\Phi_{\Theta}^{j_{L}-1}(\bx) + (1-t)A^{L,j_{L}}\Phi_{\Theta}^{j_{L}-1}(\bw).
\end{equation}
Let us write
\begin{align*}
&\bu\coloneqq t(\bb^{L} + A^{L,j_{L}}\Phi_{\Theta}^{j_{L}-1}(\bx) +  W^{L}C^{L}(\Phi_{\Theta}^{L-1}(\bx))), \\
&\bv\coloneqq (1-t)(\bb^{L} + A^{L,j_{L}}\Phi_{\Theta}^{j_{L}-1}(\bw) +  W^{L}C^{L}(\Phi_{\Theta}^{L-1}(\bw))).
\end{align*}
Combining \eqref{prop1:eq1}, \eqref{prop1:eq2} and \eqref{prop1:eq3} with \eqref{hypothesis2}, we get 

\scalebox{0.9}{\parbox{.5\linewidth}{%
	\begin{align*}\Phi_{\Theta,d}(\bz^1)=\sigma^{L}_d\left(\bb^{L} + A^{L,j_{L}}\Phi_{\Theta}^{j_{L}-1}(\bz^1) +  W^{L}C^{L}(\Phi_{\Theta}^{L-1}(\bz^1))\right)&\leq \sigma^{L}_d(t\bu+(1-t)\bv)\\
	&\leq t\sigma^{L}_d(\bu) + (1-t)\sigma^{L}_d(\bv)\\
	&=t\Phi_{\Theta,d}(\bx) + (1-t)\Phi_{\Theta,d}(\bw).\end{align*}%
}
}

\noindent This concludes the proof.
\end{proof}

\begin{figure}[!tbh]
		\begin{adjustbox}{max totalsize={.8\textwidth}{.8\textheight},center}%
		\begin{tikzpicture}[auto,node distance =3 cm ,on grid ,
			thick ,
			square1/.style={
				isosceles triangle,
				isosceles triangle apex angle=60,
				draw,
				inner sep=1pt,
				top color = amber, 
				bottom color = amber
			},
			square2/.style={
				draw,
				minimum width=2cm,
				minimum height=1cm,
				font=\huge,
				top color = azure, 
				bottom color = azure
			},
			input/.style ={draw, circle, top color = darkpastelgreen, bottom color =darkpastelgreen, black, text=black,inner sep=0.5pt},
			ghost/.style ={ circle ,top color =white , bottom color = white ,
				draw,white , text=white , minimum width =1 cm},
			bias/.style ={ circle ,top color =white , bottom color = white ,
				draw, white , text=black , inner sep=0.5pt},
			output/.style ={draw, circle, top color = amethyst, bottom color =amethyst, black, text=black,inner sep=0.5pt},
			state/.style={draw,
				circle,
				inner sep=2pt,
				font=\huge,
				top color = azure, 
				bottom color = azure, 		
				draw,
				black,
				text=black}]

			\node[square2, top color=white, bottom color=white] (0) [] {$\boldsymbol{x}$};
			\node[square1, shape border rotate=270] (1/2) [below =1.5cm of 0] {$\sigma^1$};
			\node[square2] (1) [below =1.5cm of 1/2] {$\boldsymbol{z}^2$};
			\node[square1, shape border rotate=270] (3/2) [below =1.5cm of 1] {$\sigma^2$};
			\node[square2] (2) [below =1.5cm of 3/2] {$\boldsymbol{z}^3$};
			\node[square1, shape border rotate=270] (5/2) [below =1.5cm of 2] {$\sigma^3$};
			\node[square2] (3) [below =1.5cm of 5/2] {$\boldsymbol{z}^4$};
			\node[square1] (7/2) [right =2.5cm of 3] {$\sigma^4$};
			\node[square2] (4) [right =2.5cm of 7/2] {$\boldsymbol{z}^5$};
			\node[square1] (9/2) [right =2.5cm of 4] {$\sigma^5$};
			\node[square2] (5) [right =2.5cm of 9/2] {$\boldsymbol{z}^6$};
			\node[square1, shape border rotate=90] (11/2) [above =1.5cm of 5] {$\sigma^6$};
			\node[square2] (6) [above =1.5cm of 11/2] {$\boldsymbol{z}^7$};
			\node[square1, shape border rotate=90] (13/2) [above =1.5cm of 6] {$\sigma^7$};
			\node[square2] (7) [above =1.5cm of 13/2] {$\boldsymbol{z}^8$};
			\node[square1, shape border rotate=90] (15/2) [above =1.5cm of 7] {$\sigma^8$};
			\node[square2, top color=darkpastelgreen, bottom color=darkpastelgreen] (8) [above =1.5cm of 15/2] {$\boldsymbol{z}$};

			\path (0) edge [black,-,bend right=25] node[left] {$A^{2,1}$} (1);
			\path (0) edge [black,-,bend right=45] node[left] {$A^{4,1}$} (3);

			\path (0) edge [black,-] node[right] {$W^1, \bb^1$} (1/2);
			\path (1/2) edge [black,->] node[right] {} (1);
			\path (1) edge [black,-] node[right] {$W^2_{\geq0}, \bb^2$} (3/2);
			\path (3/2) edge [black,->] node[right] {} (2);
			\path (2) edge [black,-] node[right] {$W^3_{\geq0}, \bb^3$} (5/2);
			\path (5/2) edge [black,->] node[right] {} (3);

			\path (3) edge [black,-] node[above] {$W^4_{\geq0}, \bb^4$} (7/2);
			\path (7/2) edge [black,->] node[below] {} (4);
			\path (4) edge [black,-] node[above] {$W^5_{\geq0}, \bb^5$} (9/2);
			\path (9/2) edge [black,->] node[below] {} (5);
			
			\path (5) edge [black,-] node[left] {$W^6_{\geq0}, \bb^6$} (11/2);
			\path (11/2) edge [black,->] node[left] {} (6);
			\path (6) edge [black,-] node[left] {$W^7_{\geq0}, \bb^7$} (13/2);
			\path (13/2) edge [black,->] node[left] {} (7);
			\path (7) edge [black,-] node[left] {$W^8_{\geq0}, \bb^8$} (15/2);
			\path (15/2) edge [black,->] node[left] {} (8);

			\path (3) edge [black,-, bend left= 30] node[below] {$A^{6,4}_{\geq0}$} (5);

			\path (2) edge [black,double,-] node[below] {$C(\boldsymbol{z}^3,\bz^7)$} (6);
			
			\node at (5,-1) [] (layer) {};

			\node[below=4cm] (S) at (layer.south) {Convex Neural Network};
		\end{tikzpicture}
		\end{adjustbox}%
		\caption{Continuation of the example in Figure \ref{fig:DNN}. All the functions $\sigma^k$ are now assumed to be component-wise convex and monotone nondecreasing, and this is represented by the triangular-shaped nodes. Observe that the matrices $W^k$, for $k=2,\ldots,8$, and $A^{6,4}$ have nonnegative entries, while $W^1$, $A^{2,1}$, $A^{4,1}$ and the bias terms $\bb^k$ do not have any constraints.}\label{fig:DNN_connected}
\end{figure}
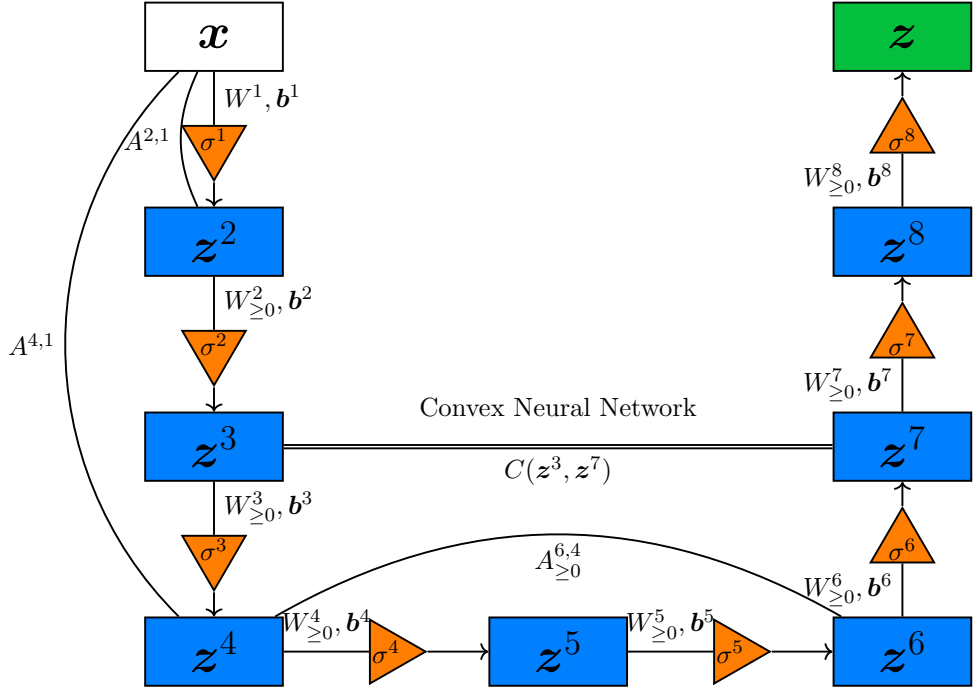

See Figure \ref{fig:DNN_connected} for a visual representation of a component-wise  convex neural network. Now, given a component-wise convex neural network $\Phi^c_{\Theta}\colon X \to Z$ of depth $L$,  define
\begin{equation}\tag{ucNN}\label{eq:ucNN}
\begin{cases}
\Phi^{uc}_\Theta: X \to Z',\\
\Phi^{uc}_\Theta (\bx)\coloneqq \Sigma_{\boldsymbol{f},\boldsymbol{g} }\left(\bx,\Phi^c_{\Theta}(\bx)\right),
\end{cases}	
\end{equation}
where $Z'\coloneqq (\R^{D'},\|\cdot\|_{Z'})$ and $\Sigma_{\boldsymbol{f},\boldsymbol{g}} \colon X\times Z \to Z'$ is given by
\begin{subequations}
\begin{equation*}
	\Sigma_{\boldsymbol{f},\boldsymbol{g} }(\bx,\bz) \coloneqq (f_1(\bx)+g_1(\bz),\ldots, f_{D'}(\bx)+g_{D'}(\bz))^T,
\end{equation*} 
and $f_d: X\to \R$, $g_d:Z \to \R$ are generic real-valued functions for $d=1,\ldots,D'$. In particular, $\Phi_{\Theta}^{uc}$ is  a  neural network $\Phi^{uc}_\Theta$ of depth $L+1$, whose last layer $\sigma^{L+1}(\hat{\bb}^{L+1}+W^{L+1}\hat{\bz}^{L+1})$ is given by
$$
\begin{cases}
\hat{\bz}^{L+1}=C(\bz^1,\bz^{L+1}),\quad	W^{L+1}=I, \quad \bb^{L+1}=A^{L+1,j_{L+1}}=\boldsymbol{0},\\
\sigma^{L+1}=\Sigma_{\boldsymbol{f},\boldsymbol{g}},
\end{cases}
$$
$$
W^{L+1}, \bb^{L+1}, A^{L+1,j_{L+1}} \in \Theta_{frozen}.
$$
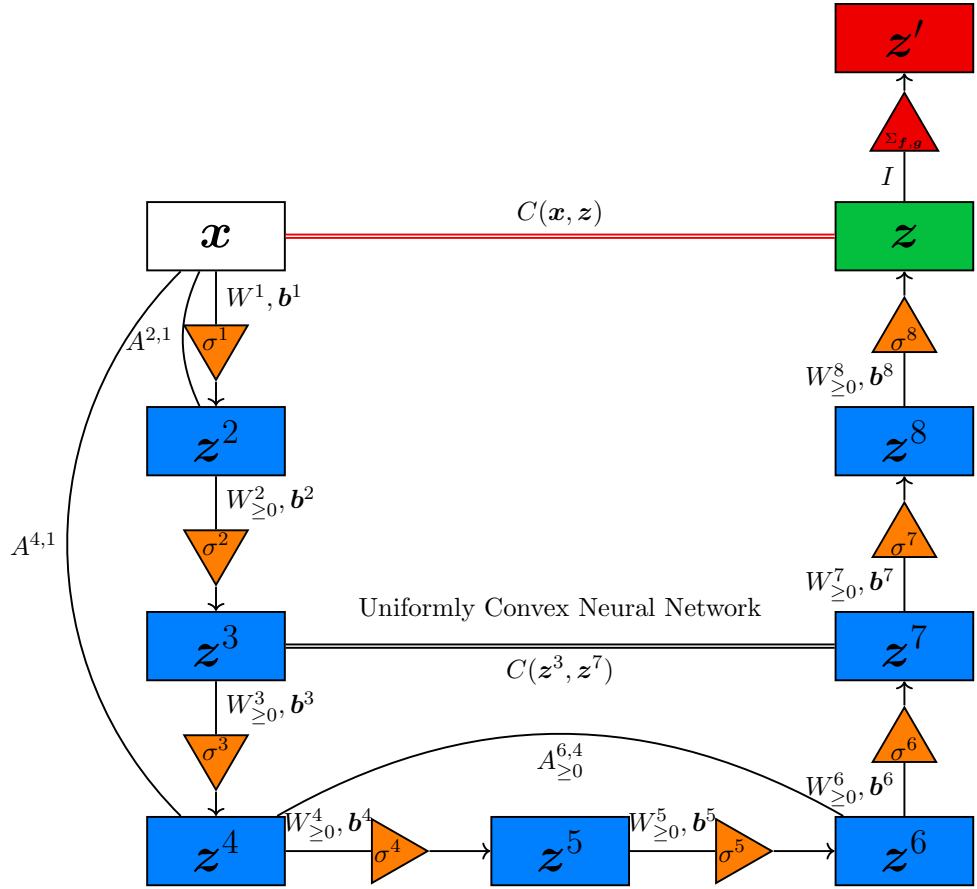
\begin{figure}[!b]
	\begin{adjustbox}{max totalsize={.8\textwidth}{.8\textheight},center}%
		\begin{tikzpicture}[auto,node distance =3 cm ,on grid ,
			thick ,
			square1/.style={
				draw,
				isosceles triangle,
				isosceles triangle apex angle=60,
				inner sep=1pt,
				top color = amber, 
				bottom color = amber
			},
			square1uc/.style={
				draw,
				isosceles triangle,
				isosceles triangle apex angle=60,
				inner sep=1pt,
				top color = Red2, 
				bottom color = Red2
			},
			square2/.style={
				draw,
				minimum width=2cm,
				minimum height=1cm,
				font=\huge,
				top color = azure, 
				bottom color = azure
			},
			square2uc/.style={
				draw,
				minimum width=2cm,
				minimum height=1cm,
				font=\huge,
				top color = Red2, 
				bottom color = Red2
			},
			input/.style ={draw, circle, top color = darkpastelgreen, bottom color =darkpastelgreen, black, text=black,inner sep=0.5pt},
			ghost/.style ={ circle ,top color =white , bottom color = white ,
				draw,white , text=white , minimum width =1 cm},
			bias/.style ={ circle ,top color =white , bottom color = white ,
				draw, white , text=black , inner sep=0.5pt},
			output/.style ={draw, circle, top color = amethyst, bottom color =amethyst, black, text=black,inner sep=0.5pt},
			state/.style={draw,
				circle,
				inner sep=2pt,
				font=\huge,
				top color = azure, 
				bottom color = azure, 		
				draw,
				black,
				text=black}]

			\node[square2, top color=white, bottom color=white] (0) [] {$\boldsymbol{x}$};
			\node[square1, shape border rotate=270] (1/2) [below =1.5cm of 0] {$\sigma^1$};
			\node[square2] (1) [below =1.5cm of 1/2] {$\boldsymbol{z}^2$};
			\node[square1, shape border rotate=270] (3/2) [below =1.5cm of 1] {$\sigma^2$};
			\node[square2] (2) [below =1.5cm of 3/2] {$\boldsymbol{z}^3$};
			\node[square1, shape border rotate=270] (5/2) [below =1.5cm of 2] {$\sigma^3$};
			\node[square2] (3) [below =1.5cm of 5/2] {$\boldsymbol{z}^4$};
			\node[square1] (7/2) [right =2.5cm of 3] {$\sigma^4$};
			\node[square2] (4) [right =2.5cm of 7/2] {$\boldsymbol{z}^5$};
			\node[square1] (9/2) [right =2.5cm of 4] {$\sigma^5$};
			\node[square2] (5) [right =2.5cm of 9/2] {$\boldsymbol{z}^6$};
			\node[square1, shape border rotate=90] (11/2) [above =1.5cm of 5] {$\sigma^6$};
			\node[square2] (6) [above =1.5cm of 11/2] {$\boldsymbol{z}^7$};
			\node[square1, shape border rotate=90] (13/2) [above =1.5cm of 6] {$\sigma^7$};
			\node[square2] (7) [above =1.5cm of 13/2] {$\boldsymbol{z}^8$};
			\node[square1, shape border rotate=90] (15/2) [above =1.5cm of 7] {$\sigma^8$};
			\node[square2, top color=darkpastelgreen, bottom color=darkpastelgreen] (8) [above =1.5cm of 15/2] {$\boldsymbol{z}$};
			\node[square1uc, shape border rotate=90] (17/2) [above =1.4cm of 8] {\tiny{$\Sigma_{\boldsymbol{f},\boldsymbol{g}}$}};
			\node[square2uc] (9) [above =1.5cm of 17/2] {$\boldsymbol{z}'$};
			
			\path (0) edge [black,-,bend right=25] node[left] {$A^{2,1}$} (1);
			\path (0) edge [black,-,bend right=45] node[left] {$A^{4,1}$} (3);
			\path (0) edge [Red2,double,-] node[above] {\textcolor{black}{$C(\bx,\bz)$}} (8);

			\path (0) edge [black,-] node[right] {$W^1, \boldsymbol{b}^1$} (1/2);
			\path (1/2) edge [black,->] node[right] {} (1);
			\path (1) edge [black,-] node[right] {$W^2_{\geq0}, \boldsymbol{b}^2$} (3/2);
			\path (3/2) edge [black,->] node[right] {} (2);
			\path (2) edge [black,-] node[right] {$W^3_{\geq0}, \boldsymbol{b}^3$} (5/2);
			\path (5/2) edge [black,->] node[right] {} (3);

			\path (3) edge [black,-] node[above] {$W^4_{\geq0}, \boldsymbol{b}^4$} (7/2);
			\path (7/2) edge [black,->] node[below] {} (4);
			\path (4) edge [black,-] node[above] {$W^5_{\geq0}, \boldsymbol{b}^5$} (9/2);
			\path (9/2) edge [black,->] node[below] {} (5);
			
			\path (5) edge [black,-] node[left] {$W^6_{\geq0}, \boldsymbol{b}^6$} (11/2);
			\path (11/2) edge [black,->] node[left] {} (6);
			\path (6) edge [black,-] node[left] {$W^7_{\geq0}, \boldsymbol{b}^7$} (13/2);
			\path (13/2) edge [black,->] node[left] {} (7);
			\path (7) edge [black,-] node[left] {$W^8_{\geq0}, \boldsymbol{b}^8$} (15/2);
			\path (15/2) edge [black,->] node[left] {} (8);
			\path (8) edge [black,-] node[left] {$I$} (17/2);
			\path (17/2) edge [black,->] node[left] {} (9);
			
			\path (3) edge [black,-, bend left= 30] node[below] {$A^{6,4}_{\geq0}$} (5);
			
			\path (2) edge [black,double,-] node[below] {$C(\boldsymbol{z}^3,\bz^7)$} (6);
			
			\node at (5,-1) [] (layer) {};

			\node[below=4cm] (S) at (layer.south) {Uniformly Convex Neural Network};
		\end{tikzpicture}
	\end{adjustbox}%
	\caption{Continuation of the examples in Figures \ref{fig:DNN} and \ref{fig:DNN_connected}. This architecture depicts a component-wise uniformly convex neural network $\Phi^{uc}_\Theta$ obtained from the component-wise convex neural network $\Phi^{c}_\Theta$ in Figure \ref{fig:DNN_connected}. The differences are depicted in red. Following equation \eqref{eq:ucNN}, it has been added a  concatenation operator $C(\bx,\bz)$ and a new layer after $\bz=\Phi^{c}_\Theta(\bx)$.}\label{fig:UCneural network}
\end{figure}
Consider the following assumptions, for every $d=1,\ldots,D'$:
\begin{align}
 &f_d \quad  \mbox{continuous and uniformly convex};\tag{C3}\label{hypothesis4}\\
 &g_d\quad  \mbox{continuous, convex and monotone nondecreasing on the image } \Phi^c_{\Theta}(X)\subseteq Z.\tag{C4}\label{hypothesis5}
\end{align}
\end{subequations}
\begin{theorem}\label{lem:DNN_uconvex}
Under the constraints \eqref{hypothesis4} and \eqref{hypothesis5}, the neural network $\Phi^{uc}_\Theta$ defined in \eqref{eq:ucNN} is component-wise uniformly convex. 
\end{theorem}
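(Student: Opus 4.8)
The plan is to exploit the additive structure of each component. Writing $\Phi^{uc}_{\Theta}=(\Phi^{uc}_{\Theta,1},\ldots,\Phi^{uc}_{\Theta,D'})^T$, equation \eqref{eq:ucNN} gives
\[
\Phi^{uc}_{\Theta,d}(\bx)=f_d(\bx)+g_d\big(\Phi^c_{\Theta}(\bx)\big),\qquad d=1,\ldots,D'.
\]
I would fix $d$ and show this sum is uniformly convex by establishing two facts: first, the second summand $G_d:=g_d\circ\Phi^c_{\Theta}$ is (plainly) convex; second, adding a convex function to the uniformly convex $f_d$ preserves uniform convexity. Continuity of $\Phi^{uc}_{\Theta,d}$ is immediate, being a sum and composition of the continuous maps $f_d$, $g_d$ (from \eqref{hypothesis4} and \eqref{hypothesis5}) and $\Phi^c_{\Theta}$ (continuous by Theorem \ref{prop:DNN_convex}).

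For the first fact I would use the classical principle that a convex, monotone nondecreasing outer function composed with a component-wise convex inner map is convex. Concretely, fix $\bx,\hat{\bx}\in X$ and $t\in[0,1]$. Since $\Phi^c_{\Theta}$ is component-wise convex (Theorem \ref{prop:DNN_convex}), the entrywise inequality
\[
\Phi^c_{\Theta}(t\bx+(1-t)\hat{\bx})\leq t\,\Phi^c_{\Theta}(\bx)+(1-t)\,\Phi^c_{\Theta}(\hat{\bx})
\]
holds. Applying first the monotonicity of $g_d$ from \eqref{hypothesis5} and then its convexity yields
\[
g_d\big(\Phi^c_{\Theta}(t\bx+(1-t)\hat{\bx})\big)\leq g_d\big(t\Phi^c_{\Theta}(\bx)+(1-t)\Phi^c_{\Theta}(\hat{\bx})\big)\leq t\,g_d\big(\Phi^c_{\Theta}(\bx)\big)+(1-t)\,g_d\big(\Phi^c_{\Theta}(\hat{\bx})\big),
\]
that is, $G_d$ is convex.

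For the second fact, let $h_d$ be a modulus witnessing the uniform convexity of $f_d$ (from \eqref{hypothesis4}). Summing the defining inequality of $f_d$ with the convexity inequality just obtained for $G_d$, the right-hand sides add linearly and the modulus of $f_d$ survives, giving
\[
\Phi^{uc}_{\Theta,d}(t\bx+(1-t)\hat{\bx})+t(1-t)h_d(\|\bx-\hat{\bx}\|_X)\leq t\,\Phi^{uc}_{\Theta,d}(\bx)+(1-t)\,\Phi^{uc}_{\Theta,d}(\hat{\bx}).
\]
Hence $\Phi^{uc}_{\Theta,d}$ is uniformly convex with the same modulus $h_d$, and since $d$ was arbitrary, $\Phi^{uc}_{\Theta}$ is component-wise uniformly convex.

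The step demanding the most care is the monotonicity estimate in the first fact: the vector $t\Phi^c_{\Theta}(\bx)+(1-t)\Phi^c_{\Theta}(\hat{\bx})$ need not lie in the image $\Phi^c_{\Theta}(X)$, only in its convex hull. I would therefore read \eqref{hypothesis5} as requiring $g_d$ to be convex and monotone nondecreasing on a convex set containing $\Phi^c_{\Theta}(X)$ (its convex hull suffices), which is precisely what legitimizes both inequalities in the displayed chain; in the applications $g_d$ is in any case defined and well behaved on all of $Z$. Everything else is routine bookkeeping.
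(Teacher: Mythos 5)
Your proof is correct and is exactly the ``direct computation'' that the paper's one-line proof leaves implicit: convexity of $g_d\circ\Phi^c_{\Theta}$ via monotonicity plus convexity of the outer map, then addition of the uniformly convex $f_d$, which preserves the modulus. Your closing remark is a genuine (if minor) sharpening the paper glosses over: as stated, \eqref{hypothesis5} only guarantees monotonicity and convexity of $g_d$ on the image $\Phi^c_{\Theta}(X)$, whereas the chain of inequalities needs these properties on a convex set containing that image (its convex hull suffices, and in the paper's concrete instance $g_d=\|\cdot\|_q^q$ on $\R^{N}_{\geq 0}$ this is automatic).
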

\begin{proof}
Given a component-wise convex map $\Phi^c_{\Theta} : X \to Z$, by direct computation it is immediate to check that $f_d + g_d\circ\Phi^c_{\Theta}$ is uniformly convex for every $d=1,\ldots, D'$, under the constraints \eqref{hypothesis4} and \eqref{hypothesis5}. 
\end{proof}

Combining Theorem \ref{prop:DNN_convex} and Theorem \ref{lem:DNN_uconvex}, given a neural network $\Phi_{\Theta} : X \to Z$, it is now easy to build a continuous and component-wise uniformly convex neural network $\Phi^{uc}_{\Theta} : X \to Z'$ by employing the following steps:

\begin{enumerate}[label={\upshape(\bfseries s\arabic*)},wide = 0pt, leftmargin = 3em]
	\item\label{step1} Fix a neural network $\Phi_{\Theta} : X \to Z$ whose architecture is given by \eqref{DNN}.
	\item\label{step2} Impose the constraints \eqref{hypothesis2}-\eqref{hypothesis3} to get  $\Phi^c_{\Theta} : X \to Z$.
	\item\label{step3} Define $\Phi^{uc}_{\Theta} : X \to Z'$ by equation \eqref{eq:ucNN} and impose the constraints \eqref{hypothesis4}-\eqref{hypothesis5}.
\end{enumerate}
See Figure \ref{fig:UCneural network} for a visual representation of a component-wise uniformly convex neural network.

\section{Example: Convex and uniformly convex U-net with batch normalization}\label{sec:example}
Here we provide particular examples of convex and uniformly convex U-net, which we will use in Subsection \ref{subsec:CovRegul} and Section \ref{sec:numerical_examples}. U-net is a convolutional neural network originally designed for biomedical image segmentation \cite{ronneberger2015u}. The network consists of a contracting path and a symmetric expanding path so that it has a U-shaped architecture. Several variants of the U-net architecture have been efficiently implemented in the solutions of inverse problems, e.g. \cite{ao2021data,heinrich2018residual,wei2018deep,antholzer2019deep,obmann2020deep}.   

We propose a modified U-net architecture as shown in Figure \ref{fig:U-net}, and we will explain the approaches to obtain convex and uniformly convex U-net architectures according to the general theories proposed in section \ref{sec:ucNN}.
In Subsection \ref{ssec:BN}, we give a brief description of the batch normalization which is one of the extra features we add to the modified U-net. In Subsection \ref{ssub:u-net}, we provide a mathematical modeling of the modified U-net architecture using the notations of general neural networks introduced in  Subsection \ref{sec:DNN}. Then in Subsections \ref{ssec:convex_U-net} and \ref{ssec:u_convex_U-net}, we explain the approaches to achieve convex and uniformly convex U-net architectures.


\begin{figure}[!bth]
	\begin{center}
		\includegraphics[width=\textwidth]{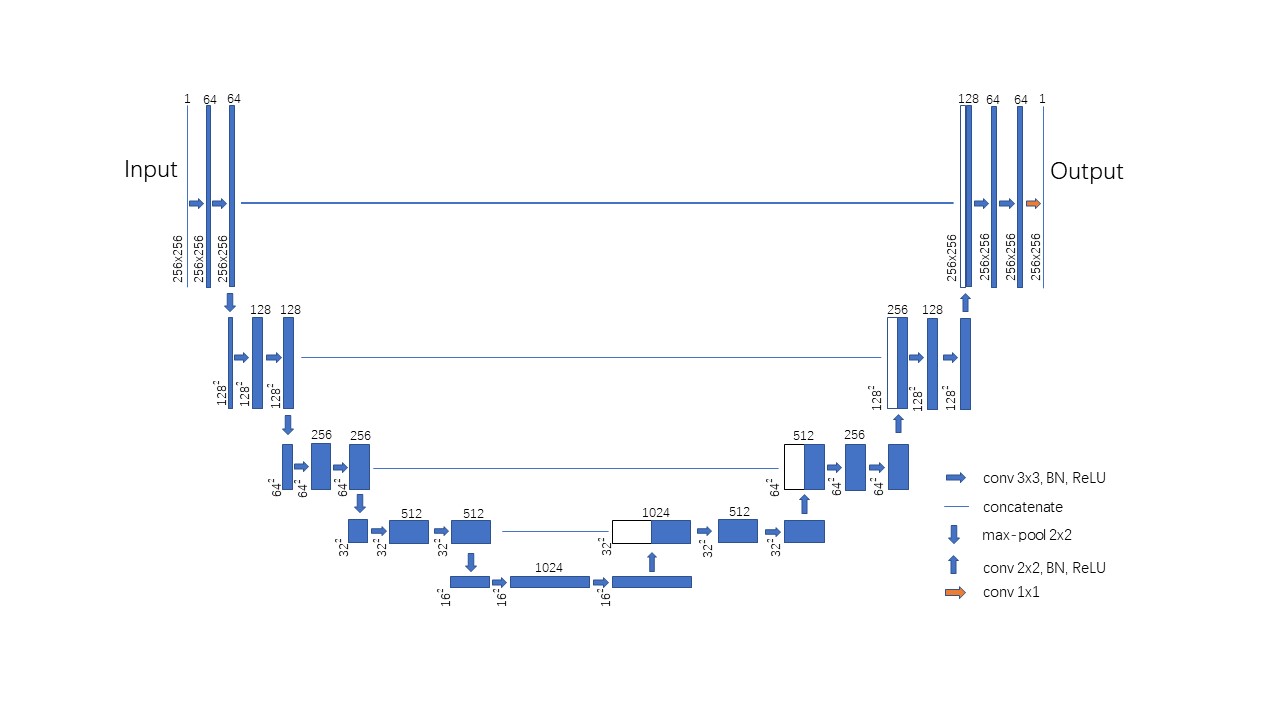}
	\end{center}\caption{A modified U-net architecture.}\label{fig:U-net}
\end{figure}

\subsection{Batch Normalization (BN)}\label{ssec:BN}
Batch normalization (BN) is a state-of-the-art technique for accelerating the training speed and improving the performance of optimization \cite{ioffe2015batch}. The batch normalization (BN) works differently in the training process and in the inference process after training. Hereby we give a brief description of the BN operation according to our notations for general neural networks in equation \eqref{DNN}.

With a given batch size $n_0$ and a mini-batch of the initial input, $$\mathcal{B}=\{\bx_1,\ldots,\bx_{n_0}\},$$  consider the output after convolution at the $k$-th layer of the neural network $\Phi_\Theta$,
$$
\mathcal{B}^k= \{ \bu^k_1,\ldots, \bu^k_{n_0}\}, \quad \mathrm{where}\quad \bu^k_j\coloneqq \hat{\bb}^k+ W^k\hat{\bz}^k_j,\quad j=1,\cdots,n_0\,.
$$
The operation of $\operatorname{BN}$ reads as follows,
\begin{equation*}
	\operatorname{BN}(\bu)\coloneqq \boldsymbol{\beta}^k + \operatorname{diag}(\boldsymbol{\gamma}^k)  \frac{\bu - \mathds{E}(\mathcal{B}^k)}{\sqrt{\operatorname{Var}(\mathcal{B}^k)+\epsilon}},\quad \forall\, \bu\in\mathcal{B}^k
\end{equation*}  
where $\boldsymbol{\gamma}^k$ and $\boldsymbol{\beta}^k$ are column vectors and the free parameters to be learned, $\mathds{E}(\mathcal{B}^k)$ and $\operatorname{Var}(\mathcal{B}^k)$ denote mean value and variance of the elements in $\mathcal{B}^k$, respectively, and  $\epsilon>0$ is a small constant introduced for numerical stability. 

In the training process, since $\mathds{E}(\mathcal{B}^k)$ and $\operatorname{Var}(\mathcal{B}^k)$ will involve the variable $\bu\in\mathcal{B}^k$, 
BN is not necessarily component-wise convex nor monotone nondecreasing. After the training process is completed, $\mathds{E}(\mathcal{B}^k)$ and $\operatorname{Var}(\mathcal{B}^k)$ will be fixed using the full population rather than the mini-batch, and so BN reduces to an affine transformation.

\subsection{The modified U-net architecture}\label{ssub:u-net}
We provide a mathematical modeling of the modified U-net architecture $\Phi_{\Theta}$ as shown in Figure \ref{fig:U-net}. Here we will make use of the notations of general neural networks introduced in Subsection \ref{sec:DNN}. Before proceeding further, it is necessary to briefly introduce the following operators.

\begin{itemize}
	 \item[$\blacksquare$] Convolution: It is a linear operator which can be represented by a tensor $K$ of dimension \newline $f_1\times f_2\times c_1\times c_2$, where $f_1\times f_2$ is the filter size of the convolution, $c_1$ and $c_2$ denote the number of channels of the input and output images, respectively. More details about the convolution operator can be found in \cite[Section 7]{higham2019deep} and \cite{dumoulin2016guide}. In the U-net architecture, a bias term is added after every convolution.
	 \item[$\blacksquare$] Zero padding: It is a linear operator which can be achieved by embedding a given matrix of dimension $n\times m$ into a zero matrix of dimension $n'\times m'$, with $n'\geq n$ and $m'\geq m$. By reshaping the matrices before and after zero padding to be column vectors, zero padding can be expressed as a matrix product, $\mathbf{M}=P_0\mathbf{N}$, where $P_0$ is a binary matrix with entries from $\{0,1\}$.
	 	\item[$\blacksquare$] Rectified linear unit (ReLU): $\bx\mapsto \max\{0,\bx\}$, where the maximum is applied component-wise.
	 \item[$\blacksquare$]$\operatorname{max-pool}$: Given a matrix of real numbers, partition it into $i\times j$ sub-matrices; the $\operatorname{max-pool}_{i\times j}(\cdot)$ returns a matrix that calculates the maximum value for each sub-matrix in the $i\times j$ partition.
	 \item[$\blacksquare$] Batch normalization (BN): Details are given in Subsection \ref{ssec:BN}. We emphasis that BN reduces to an affine transformation after training.
\end{itemize}

In the modified U-net architecture $\Phi_{\Theta}$, every convolution operator is combined with a zero padding, such that the input and output images have the same size. In addition, we add a Batch normalization (BN) between each convolution and ReLU function. As shown in Figure \ref{fig:U-net}, the modified U-net architecture can be constructed by several unit blocks: right arrow (involving concatenation), down arrow, and up arrow. Accordingly, we provide mathematical modeling for these unit blocks. 

\begin{itemize}
	\item Right arrow \tikz[baseline={(a.base)}]{\node (a) at (0pt,0pt) [] {};
		\draw[-{Triangle[width=8pt,length=4pt]}, color=arrow, line width=5pt]($(a)+(0pt,4pt)$) -- ($(a)+(12pt,4pt)$);} : A $k$-th right arrow reads as 
	$$
	\operatorname{ReLU}(\operatorname{BN}(\bb^k+K^kP_0\hat{\bz}^k)).
	$$ 
	According to the notations in equation \eqref{DNN}, the block of right arrow can be modeled as the composition of two layers, 
	\begin{equation} \label{eqn_RA}
	\begin{cases}
	\bz^{k+1}=\sigma^{k}(\hat{\bb}^{k} + W^{k}\hat{\bz}^{k}),\ \hat{\bz}^{k+1}=\bz^{k+1} \\
	\bz^{k+2}=\sigma^{k+1}(\hat{\bb}^{k+1} + W^{k+1}\hat{\bz}^{k+1})
	\end{cases}\,,
	\end{equation}
	where
	\[
        \begin{cases}W^k=K^kP_0,\quad \hat{\bb}^k=\bb^k \\
                             \sigma^k(\bu)= \frac{\bu - \mathds{E}(\mathcal{B}^k)}{\sqrt{\operatorname{Var}(\mathcal{B}^k)+\epsilon}} \\
		              K^k,\,\bb^k\in \Theta_{free}, \quad P_0\in \Theta_{frozen}
        \end{cases}\,, \quad \mathrm{and} \quad
	\begin{cases}W^{k+1}=\operatorname{diag}(\boldsymbol{\gamma}^{k}),\quad \hat{\bb}^{k+1}=\boldsymbol{\beta}^{k} \\
			\sigma^{k+1}(\bu)= \operatorname{ReLU}(\bu) \\
			\boldsymbol{\gamma}^{k},\,\boldsymbol{\beta}^{k}\in \Theta_{free}
	\end{cases}\,.
	\]
$P_0$ denotes the matrix of zero padding, which is a binary matrix with entries from $\{0, 1\}$. The filter size of every convolution $K^k$ is $3\times 3$ and the stride is $1$. In the ascending part of the U-net architecture, the concatenation operation is involved before the first convolution of each layer, and it can be modeled by the formulation of general neural networks as shown in Subsection \ref{sec:DNN}, i.e.  $\hat{\bz}^{k}=C(\bz^{i_k},\bz^k)$.

\item Down arrow \tikz[baseline={(a.base)}]{\node (a) at (0pt,0pt) [] {};
		\draw[-{Triangle[width=8pt,length=4pt]}, color=arrow, line width=5pt]($(a)+(0pt,12pt)$) -- ($(a)+(0pt,0pt)$);} : An $i$-th down arrow reads as $$\operatorname{max-pool}_{2\times2}(\hat{\bz}^i).$$
It can be simply modeled by the formula of general neural networks as shown in equation \eqref{DNN},
\begin{equation}\label{eqn_DA}
\sigma^i(\hat{\bb}^i+W^i\hat{\bz}^i)\,,
\end{equation}
where $ \hat{\bb}^i=\boldsymbol{0}$, $W^i=I$, and $\sigma^i(\bu)= \operatorname{max-pool}_{2\times2}(\bu)$; $\sigma^i$ is continuous, component-wise convex and monotone nondecreasing.

\item 	Up arrow \tikz[baseline={(a.base)}]{\node (a) at (0pt,0pt) [] {};
	\draw[-{Triangle[width=8pt,length=4pt]}, color=arrow, line width=5pt]($(a)+(0pt,0pt)$) -- ($(a)+(0pt,12pt)$);} : The up arrow has the same structure of the right arrow, and it can be modeled in the same way by equation (\ref{eqn_RA}). There are some minor differences in the convolution $K^k$: (i) the filter size of $K^k$ is $2\times 2$; (ii) the way of zero padding is different from that in the right arrow, but since it is still a zero padding, the matrix $P_0$ is still a binary matrix with entries from $\{0,1\}$.

\item Last right arrow \tikz[baseline={(a.base)}]{\node (a) at (0pt,0pt) [] {};
	\draw[-{Triangle[width=8pt,length=4pt]}, color=last-arrow, line width=5pt]($(a)+(0pt,4pt)$) -- ($(a)+(12pt,4pt)$);} :  The last right arrow is just a convolution with filter size $1\times 1$. It can be modeled as part of equation (\ref{eqn_RA}):
	\begin{equation} \label{eqn_last_RA}
	\bz^{L+1}=\sigma^{L}(\hat{\bb}^{L} + W^{L}\hat{\bz}^{L})\,,
	\end{equation}
where $W^L=K^L P_0$, $\hat{\bb}^L=\bb^L$, and  $\sigma^L(\bu)=\bu$. The output $\bz^{L+1}$ is of size $256\times256\times1$, where the channel number is $1$. As a result, the bias term $\hat{\bb}^L=\bb^L$ is essentially a scalar, i.e. the components of $\bb^L$ are all equal to $b^L_0\in\R$.
\end{itemize}


\subsection{Convex U-net}\label{ssec:convex_U-net}
In Subsection \ref{ssub:u-net}, we showed that the modified U-net architecture $\Phi_{\Theta}$ can be modeled by the set of equations \eqref{DNN}. Then according to Theorem \ref{prop:DNN_convex}, we can achieve a convex U-net by imposing the constraints \eqref{hypothesis2} and \eqref{hypothesis3}. Since the skip connection described by $A^{k,j_k}$ is not involved in $\Phi_{\Theta}$, we only need to consider the properties of $\sigma^k$ and $W^k$.

The architecture modeled by equation (\ref{eqn_DA}) automatically satisfies the constraints \eqref{hypothesis2} and \eqref{hypothesis3}. For the architecture modeled by equation (\ref{eqn_RA}) (which includes the formulation in equation (\ref{eqn_last_RA}) as well), $\sigma^k$ and $\sigma^{k+1}$ are continuous, convex, and monotone nondecreasing after training, so that the constraint \eqref{hypothesis2} is satisfied. To impose the constraint \eqref{hypothesis3} on $W^k$ and $W^{k+1}$, we have to impose nonnegativity constraint on the entries of $K^k$ and $\boldsymbol{\gamma}^{k}$. Above all, we have the following conclusion.
\begin{proposition} \label{prop_CovexUnet}
By imposing nonnegativity constraint on $\{K^k\mid\forall k\ge2\}$ and $\{\boldsymbol{\gamma}^{k}\mid\forall k\ge1\}$, the modified U-net $\Phi_{\Theta}$ turns out to be a convex U-net in the inference process after training.
\end{proposition}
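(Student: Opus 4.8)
The plan is to reduce everything to Theorem~\ref{prop:DNN_convex}: since Subsection~\ref{ssub:u-net} already exhibits the modified U-net as a particular instance of the general architecture \eqref{DNN}, it suffices to verify that, after training, the two structural constraints \eqref{hypothesis2} and \eqref{hypothesis3} are met by every unit block. First I would record the trivial observation that no skip connection of the form $A^{k,j_k}$ with $j_k\geq 2$ appears in $\Phi_\Theta$ (only concatenations $C(\bz^{i_k},\bz^k)$ occur, and these are already accounted for inside the proof of Theorem~\ref{prop:DNN_convex}); hence the second line of \eqref{hypothesis3} is vacuously satisfied, and I only need to control the activation functions $\sigma^k$ and the weight matrices $W^k$.

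Next I would verify constraint \eqref{hypothesis2} block by block. For a right arrow or an up arrow, equation \eqref{eqn_RA} splits the block into a batch-normalization layer with $\sigma^k(\bu)=\bigl(\bu-\mathds{E}(\mathcal{B}^k)\bigr)/\sqrt{\operatorname{Var}(\mathcal{B}^k)+\epsilon}$ followed by a ReLU layer. In the inference process after training, $\mathds{E}(\mathcal{B}^k)$ and $\operatorname{Var}(\mathcal{B}^k)$ are frozen to population statistics, so $\sigma^k$ becomes an affine map with strictly positive slope $1/\sqrt{\operatorname{Var}(\mathcal{B}^k)+\epsilon}$, hence continuous, component-wise convex and monotone nondecreasing; the same three properties hold for $\operatorname{ReLU}$ and, trivially, for the identity map $\sigma^L(\bu)=\bu$ of the last right arrow \eqref{eqn_last_RA}. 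For a down arrow, $\sigma^i=\operatorname{max-pool}_{2\times2}$ was already noted to be continuous, component-wise convex and monotone nondecreasing (a coordinate-wise maximum of linear maps). Thus \eqref{hypothesis2} holds for every $k$.

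Finally I would verify constraint \eqref{hypothesis3}, namely $W^k\geq 0$ for every $k\geq 2$. The down arrows contribute $W^i=I\geq 0$, which is harmless. For the convolutional layers, $W^k=K^kP_0$ where $P_0$ is binary and hence entrywise nonnegative; imposing $K^k\geq 0$ therefore yields $W^k=K^kP_0\geq 0$ as a product of nonnegative matrices. For the batch-normalization scaling layers, $W^{k+1}=\operatorname{diag}(\boldsymbol{\gamma}^k)$, which is nonnegative precisely when $\boldsymbol{\gamma}^k\geq 0$. The only delicate point, and the place I expect to have to argue most carefully, is the bookkeeping of indices: the very first weight matrix $W^1$ is exempt from \eqref{hypothesis3}, and since the leading convolution $K^1$ sits inside $W^1=K^1P_0$, no sign condition is needed on it; this is exactly why nonnegativity is imposed only on $\{K^k\mid k\geq 2\}$, whereas every scaling $\boldsymbol{\gamma}^k$ appears in $W^{k+1}$ with index $k+1\geq 2$, forcing the constraint on the whole family $\{\boldsymbol{\gamma}^k\mid k\geq 1\}$. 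With \eqref{hypothesis2} and \eqref{hypothesis3} established, Theorem~\ref{prop:DNN_convex} immediately yields that $\Phi_\Theta$ is continuous and component-wise convex in the inference process after training, which is the assertion.
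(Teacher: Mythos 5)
Your proposal is correct and follows essentially the same route as the paper: reduce to Theorem~\ref{prop:DNN_convex}, observe that no $A^{k,j_k}$ skip connections occur, check \eqref{hypothesis2} and \eqref{hypothesis3} block by block (down arrows are automatic, BN becomes affine after training, and $W^k=K^kP_0\geq 0$ and $W^{k+1}=\operatorname{diag}(\boldsymbol{\gamma}^k)\geq 0$ under the stated sign constraints). Your index bookkeeping for why $K^1$ is exempt while every $\boldsymbol{\gamma}^k$ is constrained is a welcome clarification of a point the paper leaves implicit, but it is the same argument.
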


In next subsection, we will build a uniformly convex U-net by employing the architecture of convex U-net. We will ask the output of the convex U-net to be nonnegative, which is helpful to construct the uniformly convex U-net in a very simple way. Considering the convex U-net obtained in Proposition \ref{prop_CovexUnet}, thanks to the ReLU function applied just before the last convolution operator, and the nonnegativity of the entries of the last convolution operator itself, the output $\bz^{L+1}$ must be nonnegative if we further impose nonnegativity constraint on the last bias term $\bb^L$.

\begin{proposition} \label{prop_CovexUnet_nonnegative}
By imposing nonnegativity constraint on $\{K^k\mid\forall k\ge2\}$, $\{\boldsymbol{\gamma}^{k}\mid\forall k\ge1\}$ and $\bb^L$, the modified U-net $\Phi_{\Theta}$ turns out to be a convex U-net $\Phi_{\Theta}^c$ having nonnegative output, i.e. the components of $\bz^{L+1}=\Phi_{\Theta}^c(\bx)$ are all nonnegative.
\end{proposition}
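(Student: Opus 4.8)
The plan is to split the statement into its two assertions---that $\Phi_{\Theta}^c$ is convex and that its output $\bz^{L+1}$ is nonnegative---and to treat them separately, since convexity is inherited almost verbatim from the previous result while nonnegativity reduces to a one-layer sign-propagation argument through the final convolution.

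For convexity I would simply invoke Proposition \ref{prop_CovexUnet}. The nonnegativity constraints assumed there, namely on $\{K^k\mid\forall k\ge2\}$ and $\{\boldsymbol{\gamma}^{k}\mid\forall k\ge1\}$, are precisely the ones retained here; the additional hypothesis $\bb^L\geq\boldsymbol{0}$ only restricts the last bias term and in no way interferes with the convexity argument, so $\Phi_{\Theta}^c$ remains convex after training.

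The substance lies in the nonnegativity of $\bz^{L+1}$, and here I would track the sign of the vector that enters the final convolution. By equation (\ref{eqn_last_RA}) the last right arrow reads $\bz^{L+1}=\sigma^L(\hat{\bb}^L+W^L\hat{\bz}^L)$ with $\sigma^L$ the identity, $\hat{\bb}^L=\bb^L$, and $W^L=K^LP_0$. The crucial observation is that $\hat{\bz}^L$ is produced by the preceding right-arrow block, whose last operation is a $\operatorname{ReLU}$ (cf.\ equation (\ref{eqn_RA})), so $\hat{\bz}^L\geq\boldsymbol{0}$ componentwise; if a concatenation $C(\cdot,\cdot)$ intervenes, its arguments are themselves postactivation feature maps, hence still nonnegative. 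Since $K^L$ is nonnegative by hypothesis (as $L\geq2$) and $P_0$ is a binary matrix with entries in $\{0,1\}$, the product $W^L=K^LP_0$ has nonnegative entries, whence $W^L\hat{\bz}^L\geq\boldsymbol{0}$; adding $\bb^L\geq\boldsymbol{0}$ and applying the identity activation yields $\bz^{L+1}=\bb^L+W^L\hat{\bz}^L\geq\boldsymbol{0}$.

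The main obstacle, such as it is, is purely architectural bookkeeping: one must confirm from the modeling in Subsection \ref{ssub:u-net} that the tensor feeding the final $1\times1$ convolution is genuinely postactivation, so that the $\operatorname{ReLU}$ nonnegativity is actually in force at that point. Once this structural fact is pinned down, the sign propagation is immediate, since a matrix with nonnegative entries applied to a nonnegative vector, followed by the addition of a nonnegative bias and the identity map, preserves nonnegativity throughout.
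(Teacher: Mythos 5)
Your proposal is correct and matches the paper's own (informal) argument: convexity is inherited from Proposition \ref{prop_CovexUnet}, and nonnegativity of the output follows because the input to the last $1\times1$ convolution is post-ReLU (hence nonnegative), $W^L=K^LP_0$ has nonnegative entries, and $\bb^L\geq\boldsymbol{0}$. No gaps.
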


\subsection{Uniformly convex U-net}\label{ssec:u_convex_U-net}
With the trained convex U-net $\Phi_{\Theta}^c$ having nonnegative output, one can construct a uniformly convex U-net according to the following formula,
\begin{equation}\label{eq:Reg}
 \Phi_{\Theta}^{uc}(\bx)\coloneqq a\|\bx\|_p^p+\| \Phi^{c}_\Theta(\bx) \|_q^q\,,\qquad p\ge2,\ q\ge1\,,
\end{equation}
where $\|\cdot\|_p$ and $\|\cdot\|_q$ denote the standard $l^p$-norm and $l^q$-norm, respectively, and $a>0$ is a small positive constant.

Indeed, formula (\ref{eq:Reg}) is in the form of equation \eqref{eq:ucNN}, with
\[
D'=1,\quad f_1(\bx)=a\|\bx\|_p^p,\quad\mathrm{and}\quad g_1(\bz)=\|\bz\|_q^q\,.
\]
The function $f_1(\bx)=a\|\bx\|_p^p$ is uniformly convex for $p\geq 2$; e.g., see \cite[Theorem 2.3]{borwein2009uniformly} and \cite[Theorem 1.f.1]{lindenstrauss2013classical}. The function $g_1(\bz)=\|\bz\|_q^q\ (q\ge1)$ is continuous, convex and monotone nondecreasing in the domain $\R^N_{\geq0}:=\{\bz\in\R^N\mid z_i\ge0,\ i=1,\cdots,N\}$. Moreover, as shown in Proposition \ref{prop_CovexUnet_nonnegative}, the output of the convex U-net $\Phi_{\Theta}^c$ satisfies $\Phi_{\Theta}^c\in\R^N_{\geq0}$. As a result, $\Phi_{\Theta}^{uc}$ in equation (\ref{eq:Reg}) is uniformly convex according to Theorem \ref{lem:DNN_uconvex}. Equation (\ref{eq:Reg}) gives a very simple way to construct the uniformly convex U-net.

\section{Iterated network Tikhonov (iNETT) method}\label{sec:iNETT}
In this section we propose the iterated network Tikhonov (iNETT) method. First, fix a continuous, uniformly  convex and well trained neural  network
\begin{equation*}
	\Phi^{uc}_\Theta : X \to (\R, |\cdot|),
\end{equation*}
for example by applying the steps \ref{step1}, \ref{step2} and \ref{step3} at the end of Section \ref{sec:ucNN}, and define 
$$\Reg\coloneqq \Phi_{\Theta}^{uc}.
$$ 
For the numerical experiments in Section \ref{sec:numerical_examples}, we will adopt the modified U-net architecture presented in Section \ref{sec:example} as equation \eqref{eq:Reg}.

Fix $r\in(1,\infty)$, $\bx_0\in X$, $\xi_0 \in \partial \Reg(\bx_0)$  and a sequence of positive real numbers $\{\alpha_n\}_n$ such that 
$$
\sum_{n=1}^\infty \alpha_n^{-1}=\infty \quad \mbox{and} \quad \alpha_n\leq c\alpha_{n+1}, \quad \mbox{where } c>0 \mbox{ is a constant}.
$$
The iNETT method is then given by
\begin{equation}\tag*{iNETT}\label{iterNETT2}
	\begin{cases}
		\bx^\delta_n:= \underset{\bx \in X}{\argmin} \frac{1}{r}\|F \bx - \by^\delta\|_Y^r + \alpha_n\Breg_{\boldsymbol{\xi}^\delta_{n-1}}(\bx,\bx^\delta_{n-1}),\\
		\boldsymbol{\xi}^\delta_{n}:= \boldsymbol{\xi}^\delta_{n-1} - \frac{1}{\alpha_{n}} F^TJ_r\left(F\bx_n^\delta - \by^\delta\right),\\
		\bx_0 \in X, \,	\boldsymbol{\xi}_0 \in \partial \Reg(\bx_0).
	\end{cases} 
\end{equation}
The stopping rule associated to \ref{iterNETT2} is the standard \emph{discrepancy principle}. Specifically, we stop the iterations at the first step $n_\delta=n$ such that
\begin{equation}\label{dicrepancy_principle}
	\left\|F\bx_{n}^\delta  - \by^\delta\right\|_Y\leq \tau\delta< \left\|F\bx_{n-1}^\delta  - \by^\delta\right\|_Y, 
\end{equation}
where $\tau>1$ is a fixed constant, and $\delta$ is the parameter which estimates the level of noise $\boldsymbol{\eta}$, viz. $\|\boldsymbol{\eta}\|_Y=\|\by^\delta-\by\|_Y\leq \delta$. Following we will provide convergence analysis and implementation details of the iNETT method.

\subsection{Convergence analysis}\label{sec:conv_analysis}
We present the proof of well-posedness, convergence and stability  of the \ref{iterNETT2} method. The proof is mainly a straightforward adaptation, in this finite dimensional setting, of  standard results of general convex regularization theory (e.g. \cite{scherzer2009variational}) and \cite[Theorem 3.2]{jin2014nonstationary}. For the convenience of the reader and to make the theoretical treatment self-contained, we develop here the main points.

\begin{theorem}\label{thm:convergence}
Let $F : X \to Y$ be a linear operator between finite dimensional normed spaces and suppose that  \eqref{hypothesis0} and \eqref{hypothesis1} are valid.  Let $\Reg\coloneqq \Phi_\Theta^{uc}$ be a continuous and uniformly convex neural network. Then the method \textnormal{\ref{iterNETT2}} is well-posed and there exists a unique $\Breg_{\bxi_0}$-\emph{minimizing} solution $\bx^\dagger$. The method \textnormal{\ref{iterNETT2}}, coupled with the stopping rule \eqref{dicrepancy_principle}, stops in finite steps $n_\delta < \infty$ and it converges to  $\bx^\dagger$ as $\delta\to 0$. In particular, 
\begin{equation*}
\bx_{n_\delta}^\delta \to \bx^\dagger, \quad \Reg(\bx_{n_\delta}^\delta)\to \Reg(\bx^\dagger), \quad \Breg_{\boldsymbol{\xi}^\delta_{n_\delta}}(\bx^\dagger,\bx_{n_\delta}^\delta)\to 0
\end{equation*}
as $\delta \to 0$.
\end{theorem}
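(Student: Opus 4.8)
The plan is to establish the three conclusions—well-posedness with existence/uniqueness of the $\Breg_{\bxi_0}$-minimizing solution, finite termination under the discrepancy principle, and convergence as $\delta\to 0$—by adapting the Banach-space iterated Tikhonov analysis of \cite{jin2014nonstationary} to the present finite-dimensional setting, exploiting throughout that $\Reg=\Phi_\Theta^{uc}$ is continuous and uniformly convex. First I would settle \emph{well-posedness}: for each fixed $n$, the functional $\bx\mapsto \frac{1}{r}\|F\bx-\by^\delta\|_Y^r + \alpha_n\Breg_{\bxi^\delta_{n-1}}(\bx,\bx^\delta_{n-1})$ is the sum of a continuous convex term and a uniformly convex term, hence itself uniformly convex; by Remark~\ref{rem:breg_coercive} the Bregman term is coercive, so the objective is coercive and the $\argmin$ is attained and unique. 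I would also verify that the dual update for $\bxi^\delta_n$ is consistent, i.e. that the prescribed $\bxi^\delta_n$ indeed lies in $\partial\Reg(\bx^\delta_n)$; this follows from writing the first-order optimality condition of the minimization problem, namely $\boldsymbol{0}\in F^TJ_r(F\bx^\delta_n-\by^\delta)+\alpha_n(\partial\Reg(\bx^\delta_n)-\bxi^\delta_{n-1})$, which rearranges exactly into the stated recursion and simultaneously certifies $\bxi^\delta_n\in\partial\Reg(\bx^\delta_n)$. Existence and uniqueness of $\bx^\dagger$ follow because the set $\{\bx : F\bx=\by\}$ is nonempty by \eqref{hypothesis0}, closed and convex, and $\Breg_{\bxi_0}(\cdot,\bx_0)$ is uniformly convex and coercive on it.

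Next I would treat \emph{finite termination}. The key monotonicity is a descent estimate for the Bregman distance to the true solution: using the subgradient inequality together with the defining property of the duality map $J_r$ (here single-valued by \eqref{hypothesis1} and Remark~\ref{rem:J_r}), I would derive a recursion of the form
\begin{equation*}
\Breg_{\bxi^\delta_n}(\bx^\dagger,\bx^\delta_n)-\Breg_{\bxi^\delta_{n-1}}(\bx^\dagger,\bx^\delta_{n-1}) \leq -\frac{1}{\alpha_n}\Big(\langle J_r(F\bx^\delta_n-\by^\delta),F\bx^\delta_n-\by^\delta\rangle - \langle J_r(F\bx^\delta_n-\by^\delta),\by-\by^\delta\rangle\Big),
\end{equation*}
so that as long as the discrepancy $\|F\bx^\delta_n-\by^\delta\|_Y$ exceeds $\tau\delta$ the right-hand side is strictly negative and bounded away from zero in a way that telescopes. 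Summing and using $\sum_n\alpha_n^{-1}=\infty$ forces the discrepancy to drop below $\tau\delta$ in finitely many steps, which is precisely the stopping index $n_\delta<\infty$ in \eqref{dicrepancy_principle}.

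For \emph{convergence} as $\delta\to 0$ I would split into two regimes. In the noise-free limit one shows the iterates form a Cauchy-like sequence in Bregman distance converging to $\bx^\dagger$; then a diagonal/stability argument transfers this to the family $\bx^\delta_{n_\delta}$, using that $n_\delta\to\infty$ as $\delta\to 0$ and that the accumulated data error is controlled by $\tau\delta$. Since $\Breg_{\bxi^\delta_{n_\delta}}(\bx^\dagger,\bx^\delta_{n_\delta})\to 0$ and $\Reg$ is uniformly convex, the defining map $h$ in the uniform-convexity inequality gives $h(\|\bx^\delta_{n_\delta}-\bx^\dagger\|_X)\to 0$, whence $\bx^\delta_{n_\delta}\to\bx^\dagger$; continuity of $\Reg$ then yields $\Reg(\bx^\delta_{n_\delta})\to\Reg(\bx^\dagger)$. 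I expect the \textbf{main obstacle} to be the convergence step rather than well-posedness: carefully controlling the interplay between the vanishing noise level $\delta$, the growing stopping index $n_\delta$, and the non-stationary parameters $\alpha_n$ (where the two-sided condition $\sum_n\alpha_n^{-1}=\infty$ and $\alpha_n\le c\alpha_{n+1}$ is essential) to guarantee the Bregman distances converge, while keeping all duality-map and subgradient manipulations valid in a merely uniformly smooth $Y$ and a general uniformly convex (not necessarily $\ell^p$) regularizer $\Reg$.
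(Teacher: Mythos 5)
Your proposal is correct and follows essentially the same route as the paper: establish well-posedness and uniqueness of $\bx^\dagger$ via coercivity and uniform convexity of the Bregman distance (Remark~\ref{rem:breg_coercive}), verify $\bxi^\delta_n\in\partial\Reg(\bx^\delta_n)$ through the first-order optimality condition exactly as you rearrange it, and then obtain finite termination and convergence from the non-stationary iterated Tikhonov analysis of \cite{jin2014nonstationary}. The only difference is one of exposition: where you sketch the descent recursion and the two-regime convergence argument, the paper simply invokes \cite[Theorem 3.2]{jin2014nonstationary} for those steps.
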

\begin{proof}

About the existence of $\bx^\dagger$ in Definition \ref{def:R-minimizing_sol}, observe that
\begin{equation*}
	0\leq c\coloneqq \inf \left\{ \Breg_{\bxi_0}(\bx,\bx_0) \mid x\in X,\; Fx = y  \right\}
\end{equation*}
is well-defined because of \eqref{hypothesis0}.  Since $\Breg_{\boldsymbol{\xi}_0}(\bx,\bx_0)$ is continuous and coercive (see Remark \ref{rem:breg_coercive}), the level sets $\{\bx \mid \Breg_{\bxi_0}(\bx,\bx_0)\leq M\}$ are compact for any $M>0$, and therefore there exists a minimizer $\bx^\dagger$, by standard topological arguments, which is unique because of the linearity of $F$. In the same fashion, and because of the convexity of $\Breg_{\boldsymbol{\xi}^\delta_{n-1}}$, $\bx_n^\delta$ is well-defined and unique at each step.

 Observe now that $J_r$ is a single-valued map thanks to \eqref{hypothesis1}, and that $\bxi_n^\delta \in \partial \Reg(\bx^\delta_n)$ in virtue of the minimality of $\bx_n^\delta$. Indeed, if we define 
 $$
 \mathcal{G}(\bx)\coloneqq  \frac{1}{r}\|F \bx - \by^\delta\|_Y^r + \alpha_n\Breg_{\boldsymbol{\xi}^\delta_{n-1}}(\bx,\bx^\delta_{n-1}),
 $$
by subgradient calculus (see for example \cite[Theorem 10.6]{rockafellar2009variational}), and because $J_r$ is the subgradient of the map $\by \mapsto \frac{1}{r}\|\by\|_Y^r$, it holds that
\begin{equation*}
0\in\partial \mathcal{G}(\boldsymbol{\bx}^\delta_{n}) = F^TJ_r(F\bx^\delta_{n}-\by^\delta) + \alpha_n\partial\Reg(\bx^\delta_{n}) -\alpha_n\bxi^\delta_{n-1}.
\end{equation*}	
The rest of the proof is a straightforward application of \cite[Theorem 3.2]{jin2014nonstationary}.
\end{proof}

\subsection{Implementation}\label{sec:implementation}
\correct{Since our applications in Section \ref{sec:numerical_examples} will be based on computerized tomography, from now on we will consider $X$ and $Y$ as vector spaces of digital images.} We will use the standard $\ell^2$-norm for $X$, that is, $X=(\R^N,\|\cdot\|_2)$ and a normalized $\ell^2$-norm for $Y= \left(\R^M,\|\cdot\|_Y\right)$ as $\|\by\|_Y\coloneqq\frac{1}{\sqrt{M}}\|\by\|_2$, and we will fix $r=2$. Therefore, the  \ref{iterNETT2} iteration  will assume the following form (see also Remark \ref{rem:J_r}),

\begin{equation}\tag*{iNETT}\label{iterNETT3}
	\begin{cases}
		\bx^\delta_n= \underset{\bx \in X}{\argmin}\frac{1}{2M}\|F \bx - \by^\delta\|_2^2+ \alpha_n\Breg_{\boldsymbol{\xi}^\delta_{n-1}}(\bx,\bx^\delta_{n-1})\,,\\
	\boldsymbol{\xi}^\delta_{n}= \boldsymbol{\xi}^\delta_{n-1} - \frac{1}{\alpha_{n}}\frac{1}{M}  F^T\left(F\bx_n^\delta - \by^\delta\right)\,, \\ 
	\bx_0 \in X, \,	\boldsymbol{\xi}_0 \in \partial \Reg(\bx_0)\,.
	\end{cases} 
\end{equation}
The minimum problem in \ref{iterNETT3} can be solved by a gradient descent approach. Recalling that the Bregman distance of the regularization term is defined as
\[
\Breg_{\boldsymbol{\xi}^\delta_{n-1}}(\bx,\bx^\delta_{n-1})=\Reg(\bx)-\Reg(\bx^\delta_{n-1})-\langle \boldsymbol{\xi}^\delta_{n-1}, \bx-\bx^\delta_{n-1}\rangle\,,
\]
the gradient descent algorithm for $\bx^\delta_n$ has the following form,
\begin{equation*} \label{eq:gradient_descent}
\bx^\delta_{n,k+1}=\bx^\delta_{n,k}-s\cdot\left[ \frac{1}{M}F^T\left(F\bx_{n,k}^\delta - \by^\delta\right)+\alpha_{n}\left(\frac{\partial\Reg(\bx)}{\partial\bx}{\Big|}_{\bx=\bx^\delta_{n,k}}-\boldsymbol{\xi}^\delta_{n-1} \right) \right]\,, \quad k\in\mathbb{N}
\end{equation*}
where $s>0$ denotes the step size of the gradient descent iteration. The initial guess for the iteration can be taken as $\bx^\delta_{n,0}=\bx^\delta_{n-1}$.

\subsection{Building a uniformly convex neural network regularizer} \label{subsec:CovRegul}
We will adopt the uniformly convex neural network \eqref{eq:Reg} presented in Subsection \ref{ssec:u_convex_U-net}, with $p=q=2$. That is,
\begin{equation*}
	\Reg(\bx)\coloneqq \Phi_{\Theta}^{uc}(\bx)=a\|\bx\|_2^2+\| \Phi^{c}_\Theta(\bx) \|_2^2,
\end{equation*}
where $\Phi^{c}_\Theta(\bx)$ is a well trained convex U-net having nonnegative output as shown in Proposition \ref{prop_CovexUnet_nonnegative}, and $a>0$ is a small positive constant.

Since the regularizer is designed to penalize artifacts in the solution, the trained U-net $\Phi^{c}_\Theta(\bx)$ should have small output for artifact-free images and have large output for images with artifacts. We follow the strategy proposed in  \cite{li2020nett} for the preparation of training data. Let $\{\bx_s^*\mid s=1,\cdots,N_1+N_2\}$ denote a set of images in $X$. The training images are constructed in the following way,
\begin{equation}\label{train_input}
	\boldsymbol{z}_s=\left\{
	\begin{array}{lcr}
		F^\dagger (F\bx_s^*+\boldsymbol{\eta}) &,& s=1,\cdots,N_1 \\
		\bx_s^*&,& s=N_1+1,\cdots,N_1+N_2
	\end{array}
	\right.
\end{equation}
where $\boldsymbol{\eta}$ denotes noise perturbations on the data, and $F^{\dagger}$ denotes the pseudo inverse of the forward operator $F$. In the set of training images $\{\boldsymbol{z}_s\mid s=1,\cdots,N_1+N_2\}$, the first $N_1$ $\boldsymbol{z}_s$'s simulate the images with artifacts, while the last $N_2$ $\boldsymbol{z}_s$'s simulate the artifact-free images. The output label is the error between $\boldsymbol{z}_s$ and $\bx_s^*$,
\begin{equation}\label{train_output}
	\boldsymbol{r}_s=|\bx_s^*-\boldsymbol{z}_s|,\quad s=1,\cdots,N_1+N_2\,,
\end{equation}
where the absolute value is applied component-wise, and the training set is composed of the pairs of input images and output labels: $$\{(\boldsymbol{z}_s,\boldsymbol{r}_s)\mid s=1,\cdots,N_1+N_2\}.$$
The loss function of one training sample is defined as follows,
\[
L_s(\Theta)=\|\Phi^c_{\Theta}(\boldsymbol{z}_s)- \boldsymbol{r}_s \|_2^2+\lambda\|\Theta_{free}\|_2^2\,,
\]
where
\[
\Theta_{free}= \{\boldsymbol{\beta}^{k};\boldsymbol{\gamma}^{k};\bb^k;K^k\}_{k}
\]
considering the modified U-net architecture as shown in Section \ref{sec:example}. We propose to use a mini-batch optimization approach in the training process, so that the cost function for updating $\Theta_{free}$ has the following form,
\begin{equation*}\label{eq:cost-function}
	\mathcal{L}_t(\Theta)=\frac{1}{|\mathcal{I}_t|}\sum_{s\in\mathcal{I}_t} L_s(\Theta) = \frac{1}{|\mathcal{I}_t|}\sum_{s\in\mathcal{I}_t} \|\Phi^c_{\Theta}(\boldsymbol{z}_s)- \boldsymbol{r}_s \|_2^2+\lambda\|\Theta_{free}\|_2^2\,,
\end{equation*}
where $\mathcal{I}_t\subset\{1,2,\cdots,N_1+N_2\}$ denotes a mini-batch of the index set, and $|\mathcal{I}_t|$ denotes the batch size. The parameter $\lambda$ controls the amount of regularization applied. We utilize a random shuffling strategy \cite{safsha20,liwanfan22} for the sampling of the mini-batch set $\mathcal{I}_t$. Then the set of training parameters $\Theta_{free}$ is updated according to the Adam algorithm \cite{kinba14}. After each updating, we impose non-negativity constraint on $\boldsymbol{\gamma}^{k}$ and $K^k$ for the convexity of the U-net $\Phi^c_{\Theta}$, with the only exception of the convolution matrix $K^1$ in the first layer. In addition, to make the convex U-net $\Phi^c_{\Theta}$ have nonnegative output, we impose non-negativity constraint on the bias term of the last layer $\bb^L$. After the training process is completed, we fix the elements $\mathds{E}(\mathcal{B}^k)$ and $\operatorname{Var}(\mathcal{B}^k)$ which define the batch normalization operators. Algorithm \ref{algo:Phi^c} summarizes the details of the training process.

\begin{algorithm}[!h]\caption{Training the convex U-net $\Phi^c_{\Theta}$ having nonnegative output} \label{algo:Phi^c}
	\begin{algorithmic}[1]
		\State  Initialize the  free parameters set $\Theta_{free}=\{\boldsymbol{\beta}^{k};\boldsymbol{\gamma}^{k};\bb^k;K^k\}_{k}$.
		\For {epoch $e=1,2,\cdots,N_e$}
		\State Sample a random permutation $\{\pi(1), \pi(2),\ldots, \pi(N_1+N_2)\}$ of the index set $\{1,2,\ldots,N_1+N_2\}$.
		\State With a given batch size $n_0$, partition the shuffled index set into $n=\frac{N_1+N_2}{n_0}$ subsets: $\mathcal{I}_t=\{\pi((t-1)n_0+1),\ldots,\pi(t\,n_0)\}$, $t=1,2,\cdots,n$.
		\For {$t=1$ to $n$}
		\State Update $\Theta_{free}$ according to the Adam algorithm and mini-batch cost function  $\mathcal{L}_t(\Theta)$.
		\State Impose nonnegativity constraint: $K^k=\max\{0,K^k\}$, $\forall k\ge2$;\quad $\boldsymbol{\gamma}^{k}=\max\{0,\boldsymbol{\gamma}^{k}\}$, 
		
		\hspace{9pt} $\forall k\ge1$;\quad $\bb^L=\max\{0,\bb^L\}$, where $\bb^L$ denotes the bias term of the last layer.
		\EndFor
		\EndFor
		\State Fix $\mathds{E}(\mathcal{B}^k)$ and $\operatorname{Var}(\mathcal{B}^k)$ in batch normalization (BN) using the full population rather than the mini-batch.
	\end{algorithmic}
\end{algorithm}

\correct{\subsection{Some remarks}
	As final comments for this section, we provide discussions on two subjects: (1) similarities and differences between \ref{NETT} and \ref{iterNETT2}; (2) alternative training strategies for the \ref{iterNETT2} method. 
	
	\subsubsection{NETT and iNETT}	
	The \ref{NETT} method in \cite{li2020nett} is developed in the setting of infinite dimensional Banach spaces. The \ref{iterNETT2} method can be generalized to the infinite dimensional case as well, where one may require the Banach space X to be reflexive, and re-define the neural network architecture \eqref{DNN} in a proper way. In this paper, we propose \ref{iterNETT2} in a finite dimensional case mainly for personal taste. Since we focus on the inverse problems in imaging science, e.g. computerized tomography, the setup is naturally of finite dimension. The images are defined in a finite dimensional vector space, and the neural networks are trained and deployed in the set of images with fixed finite pixels. When considering the accuracy of discretization, it will be necessary to study \ref{iterNETT2} in the infinite dimensional setup. A main difficulty will be the convergence analysis: It is unclear under what hypotheses a convex neural network, coupled with training and optimization rules, converges to an infinite dimensional operator. 
	
	A major difference between \ref{NETT} and \ref{iterNETT2} lies in the assumption of coercivity or convexity on the data-driven regularizer. For the convergence analysis of \ref{NETT} method, the regularization term $\Reg(\bx)\coloneqq \psi(\Phi_\Theta(\bx))$ is assumed to be coercive \cite{li2020nett}. On the other hand, in our \ref{iterNETT2} method, the neural network regularizer $\Reg(\bx)\coloneqq\Phi_{\Theta}^{uc}(\bx)$ is required to be uniformly convex. In fact, at each iteration of  \ref{iterNETT2}, the penalty term $\mathcal{P}_n(\bx)\coloneqq \Breg_{\boldsymbol{\xi}^\delta_{n-1}}(\bx,\bx^\delta_{n-1})$ is coercive, as a consequence of the uniform convexity of $\Reg$ (see Remark \ref{rem:breg_coercive}). The assumption of convexity on $\Reg$ is from the iterated Tikhonov method. For example, considering the proofs of \cite[Theorems 3.1 and 3.2]{jin2014nonstationary}, a crucial role to prove the (strong) convergence is played by the following inequality, 
	$$
	\Breg_{\boldsymbol{\xi}}(\bx,\hat{\bx})\geq f(\|\bx -\hat{\bx}\|_X),
	$$
	where $f \colon [0,+\infty) \to [0,+\infty)$ is a strictly increasing continuous function such that $f(0)=0$. The above inequality is true if and only if $\Reg$ is uniformly convex (see \cite[Theorem 3.5.10]{zalinescu2002convex}). It would be interesting to understand if the assumption of uniform convexity on $\Reg$ can be relaxed, say for example just asking coercivity.  To the best of our knowledge, it is yet an open problem.  
	
	\subsubsection{Alternative training strategies for \ref{iterNETT2}}
	As described in section \ref{subsec:CovRegul}, the training strategy of \ref{iterNETT2} is adapted from that of \ref{NETT} \cite{li2020nett}. This is a natural choice when we consider the evolution of \ref{NETT} into an iterated Tikhonov method. In this way, we can have a simple comparison between the reconstruction results of \ref{NETT} and \ref{iterNETT2}, which will be provided in the next section.
	
	Nevertheless, it would be useful and interesting to study alternative training strategies for \ref{iterNETT2}. In \cite{lunz2018adversarial}, it proposes an adversarial training strategy for the neural network regularizer. The adversarial regularizer learns to discriminate between the distribution of ground truth images and the distribution of unregularized reconstructions. The training can be performed even if only unsupervised training data are available \cite{lunz2018adversarial}. In \cite{mukherjee2020learned}, the adversarial strategy is adopted to successfully build a learned convex regularizer. 
This strategy deserves elaborate study in the future work, and it provides a direction to further improve the \ref{iterNETT2} algorithm.
	
	Moreover, in \cite{sun2016deep}, it develops a deep ADMM-Net for compressive sensing MRI, which is one of the earliest learned reconstruction methods in imaging science. The ADMM-net maps the ADMM iterative procedure to a data flow graph, and then generalizes the operations of ADMM to have learnable parameters as network layers. This strategy inspires us to generalize \ref{iterNETT2} to a fully learned architecture in the future study, e.g. learn the neural network regularizer $\Reg(\bx)$ in the optimization process, which would provide more flexibility of building the data-driven regularizer, even though the convergence analysis can become much more difficult.

}

\section{Application to computerized tomography}\label{sec:numerical_examples}
We provide applications of the \ref{iterNETT3} algorithm, as presented in Subsection \ref{sec:implementation}, in 2D computerized tomography (CT). The imaging problem can be modeled as an inverse problem of the Radon transform:
\begin{equation}\label{app1}
	y(s,\phi)=\int_{\mathbb R} x(s\cos\phi-t\sin\phi, s\sin\phi+t\cos\phi)\,\mathrm{d}t\,,\quad  (s,\phi)\in\mathbb{R}\times[0,2\pi]\,,
\end{equation}
where $x(\cdot,\cdot)$ denotes the attenuation image to be recovered, and $y$ denotes the projection data. After discretization, equation (\ref{app1}) reduces to a linear system,
\begin{equation}\label{eqn8.2}
	\by=F\bx,
\end{equation}
where $\bx=(x_1,\ldots,x_N)^T$ is the vector of the attenuation image, $\by=(y_1,\ldots,y_M)^T$ simulates the measurement projection data, and $F=(a_{ij})_{M\times N}$ denotes the projection matrix; the component $a_{ij}$ represents the contribution of the $j$-th pixel of the attenuation image to the $i$-th projection datum. The inverse problem is to recover $\bx$ from the measurement data $\by^{\delta}$ perturbed by unknown noises, viz. $\by^{\delta}=\by+\boldsymbol{\eta}$. In our CT application, we consider the attenuation image with $256\times256$ pixels, and the sinogram of projection data is of size $256$ pixels by $60$ views. As a result, we have $N=256^2,\, M=256\times60$.

\subsection{Training convex U-net} \label{AppTrainUnet}
The convex U-net for \ref{iterNETT3} is trained according to the approach described in Subsection \ref{subsec:CovRegul}. The attenuation images are taken as synthetic phantoms consisting of randomly generated piecewise constant ellipses. We prepared 2600 synthetic phantoms, denoted as $\{\bx_s^*\mid s=1,\cdots,2600\}$, and Figure \ref{Fig5} shows six of them.

\begin{figure}[h]
	\centering
	\scalebox{0.25}[0.25]{\includegraphics{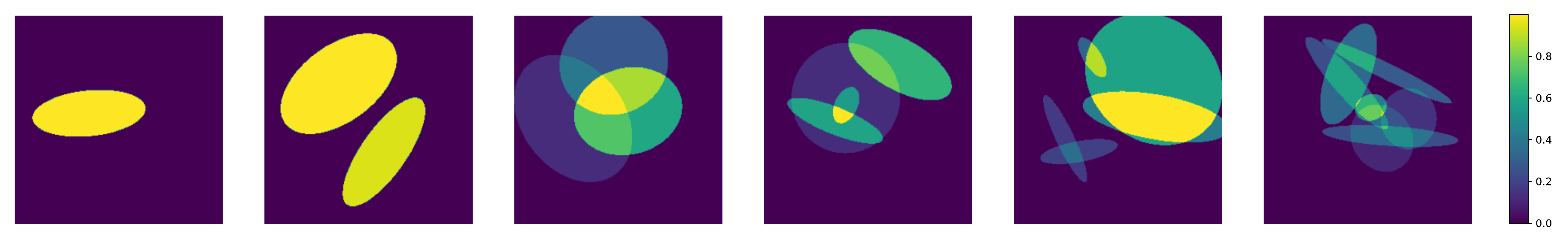}}
	\caption{\correct{six of the 2600 synthetic phantoms.}} \label{Fig5}
\end{figure}

The pairs of input images and output labels $\{(\boldsymbol{z}_s,\boldsymbol{r}_s)\mid s=1,\cdots,2600\}$ are constructed according to equation (\ref{train_input}) and equation (\ref{train_output}), where we take $N_1=N_2=1300$, so that the first 1300 samples simulate the images with artifacts and the second 1300 samples simulate the artifact-free images. The input images of the first 1300 samples are constructed according to
\[
\boldsymbol{z}_s=F^{\dagger}(F\bx_s^*+\boldsymbol{\eta}),
\]
where we add $0-10\%$ Gaussian noise to simulate the real situation of practical measurements. To be specific, 
$$
\boldsymbol{\eta}=\eta\cdot(F\bx_s^*)\cdot \mathcal{N}(0,1),
$$
where $\eta$ denotes a random number between 0 and 0.1, and $\mathcal{N}(0,1)$ denotes the Gaussian noise with zero mean and unit variance. Figure \ref{Fig6}\,(a) shows the simulated measurement data with Gaussian noises for the six synthetic phantoms displayed in Figure \ref{Fig5}. The pseudo inverse $F^\dagger$ for the radon transform is achieved by the algebraic reconstruction technique (ART) \cite{gorbenher70,ao2021data,}, which is a Kaczmarz iterative scheme for the linear system (\ref{eqn8.2}). The ART algorithm has the following formula:
\begin{equation*}\label{eqn3.2}
	\bx^{(n)}=\bx^{(n-1)}+\frac{y^\delta_i-\mathbf{a}_i^T\bx^{(n-1)}}{\|\mathbf{a}_i\|_2^2}\,\mathbf{a}_i,\qquad n\in\mathbb{N}^+,\ i=1,2,\cdots,M\,,
\end{equation*}
where $\mathbf{a}_i$ denotes the $i$-th row of the projection matrix $F$, $\mathbf{a}_i=(a_{i1},\cdots,a_{iN})^T$, and $y^\delta_i$ denotes the $i$-th component of the measurement data. One round of ART iterations implies one complete sweep of the measurement data $y^\delta_i$, with $i$ going from $1$ to $M$. In our application, the pseudo inverse $F^\dagger$ is constructed by 5 rounds of ART iterations, and the initial guess is taken as $\bx^{(0)}=\left(\frac{1}{N}\right)_{N\times1}$. Figure \ref{Fig6}\,(b) shows the input images $\boldsymbol{z}_s=F^{\dagger}(F\bx_s^*+\boldsymbol{\eta})$ corresponding to the six synthetic phantoms displayed in Figure \ref{Fig5}. Since ART and \ref{iterNETT3} are both iterative approaches, the artifacts in the images generated by ART is consistent with the artifacts to be regularized in the \ref{iterNETT3} algorithm.
\begin{figure}[h]
	\centering
	\correct{\subfigure[]{\scalebox{0.25}[0.25]{\includegraphics{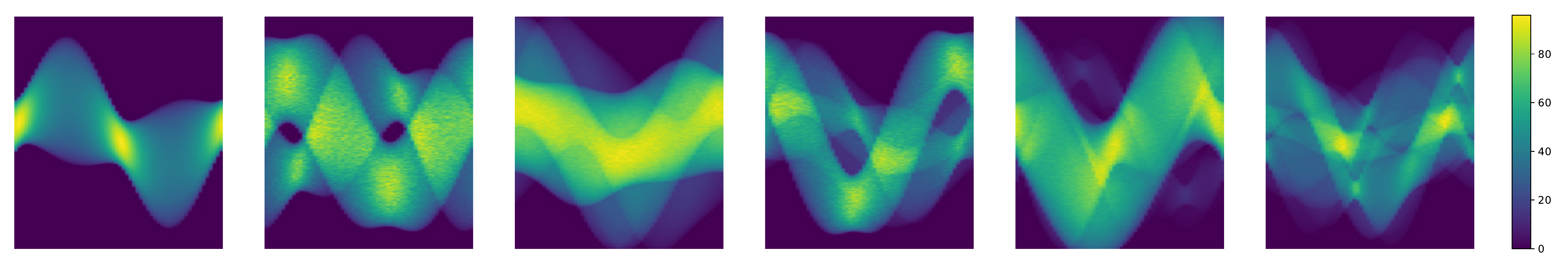}}}}
	\correct{\subfigure[]{\scalebox{0.25}[0.25]{\includegraphics{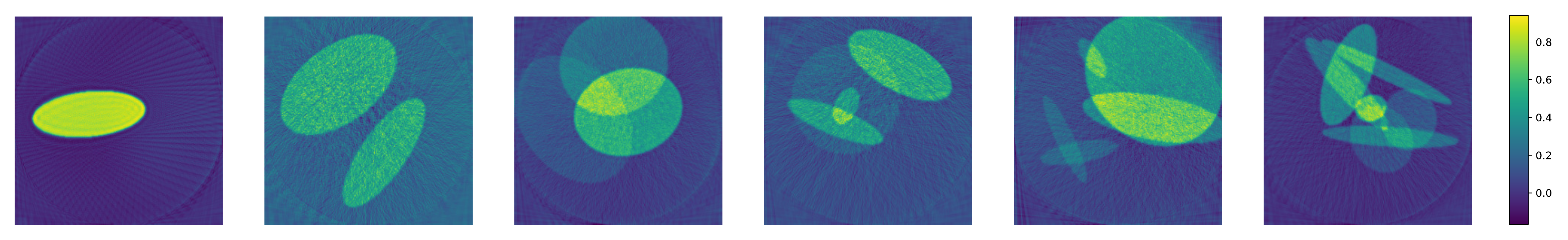}}}}
	\caption{Measurement data and input images with artifacts corresponding to the six synthetic phantoms displayed in Figure \ref{Fig5}. (a) Measurement data with Gaussian noises: $F\bx_s^*+\boldsymbol{\eta}$; (b) input images with artifacts constructed by 5 round of ART iterations on the measurement data.} \label{Fig6}
\end{figure}

We separate the 2600 prepared samples into three sets: the training set of 2000 samples, $\{(\boldsymbol{z}_s,\boldsymbol{r}_s)\mid s\in \{1,\cdots,1000\}\cup \{1301,\cdots2300 \} \}$, the validation set of 400 samples, $\{(\boldsymbol{z}_s,\boldsymbol{r}_s)\mid s\in \{1001,\cdots,1200\}\cup \{2301,\cdots2500 \} \}$, and the test set of 200 samples, $\{(\boldsymbol{z}_s,\boldsymbol{r}_s)\mid s\in \{1201,\cdots,1300\}\cup \{2501,\cdots2600 \} \}$. Each set consists of two types of input images: half of images with artifacts, and half of artifact-free images. The convex U-net $\Phi^c_{\Theta}$ is trained according to Algorithm \ref{algo:Phi^c}, where the regularization parameter $\lambda$ is taken as $5\times10^{-4}$, the batch size is set as $n_0=|\mathcal{I}_t|=10$, and the learning rate is $5\times10^{-4}$. Figure \ref{Fig7} shows the convergence plot in the training process, where the blue curve illustrates the convergence of $\mathcal{L}_t(\Theta)$ on training set, and the red curve shows its performance on validation set. Since we employ a mini-batch optimization approach with random shuffling, and the non-negativity constraint is imposed after every iteration, the cost function $\mathcal{L}_t(\Theta)$ has an oscillating behavior. Finally, we evaluate the trained convex U-net on the test set of 200 samples. The mean squared error (MSE) is used to measure the performance of reconstructions, and it is defined in the following way,
\[
\mathrm{MSE}=\frac{1}{N}\|\Phi^c_{\Theta}(\boldsymbol{z}_s)- \boldsymbol{r}_s \|_2^2\,,\qquad \boldsymbol{z}_s,\, \boldsymbol{r}_s \in\mathbb{R}^N\,.
\]
\begin{figure}[!t]
	\centering
	\scalebox{0.4}[0.4]{\includegraphics{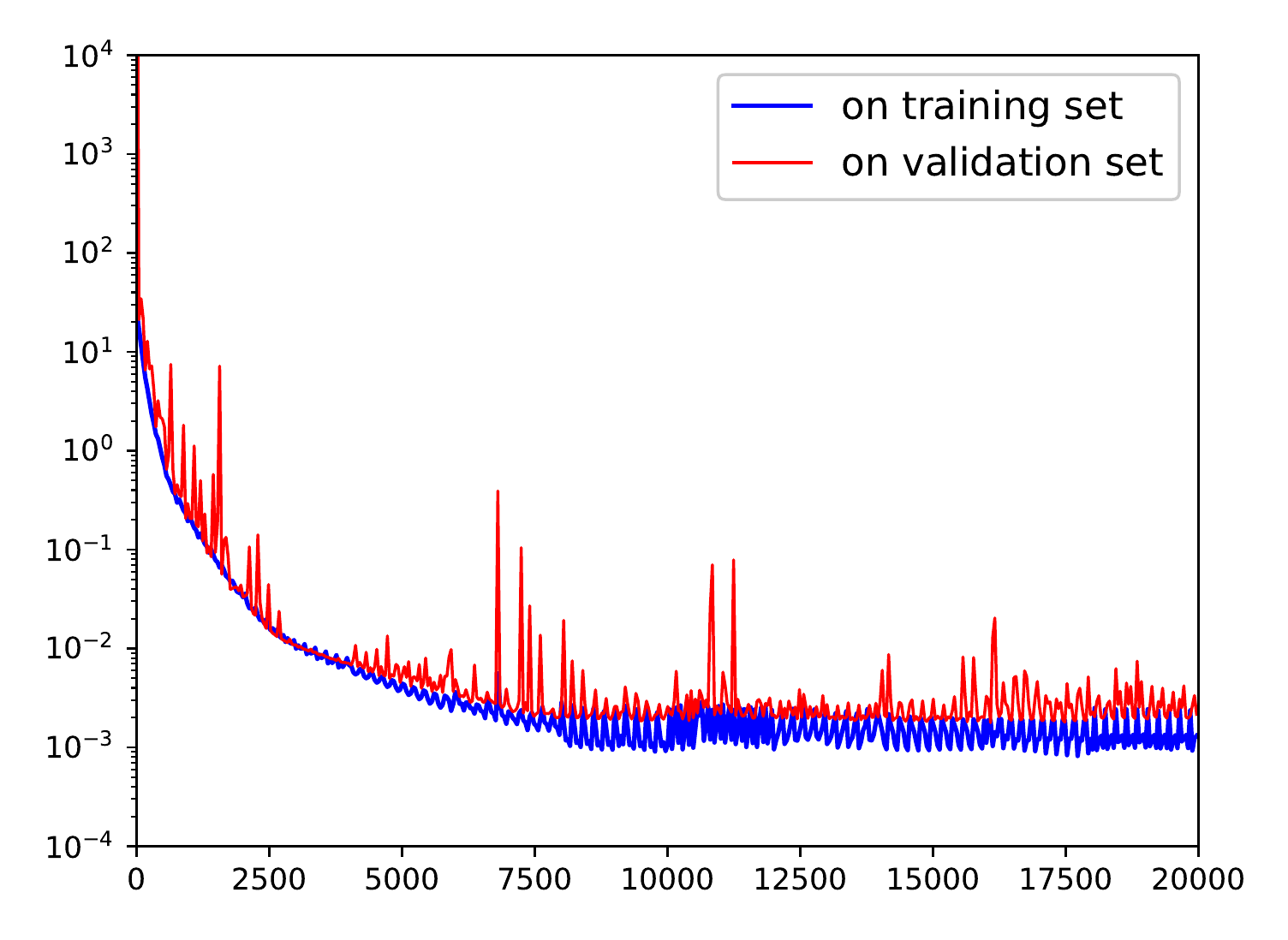}}
	\caption{Convergence plot of $\mathcal{L}_t(\Theta)$ in the training process. The blue curve illustrates the convergence on training set, and the red curve shows the performance on validation set.} \label{Fig7}
\end{figure}
\begin{figure}[!htbp]
	\centering
	\subfigure[]{\scalebox{0.4}[0.4]{\includegraphics{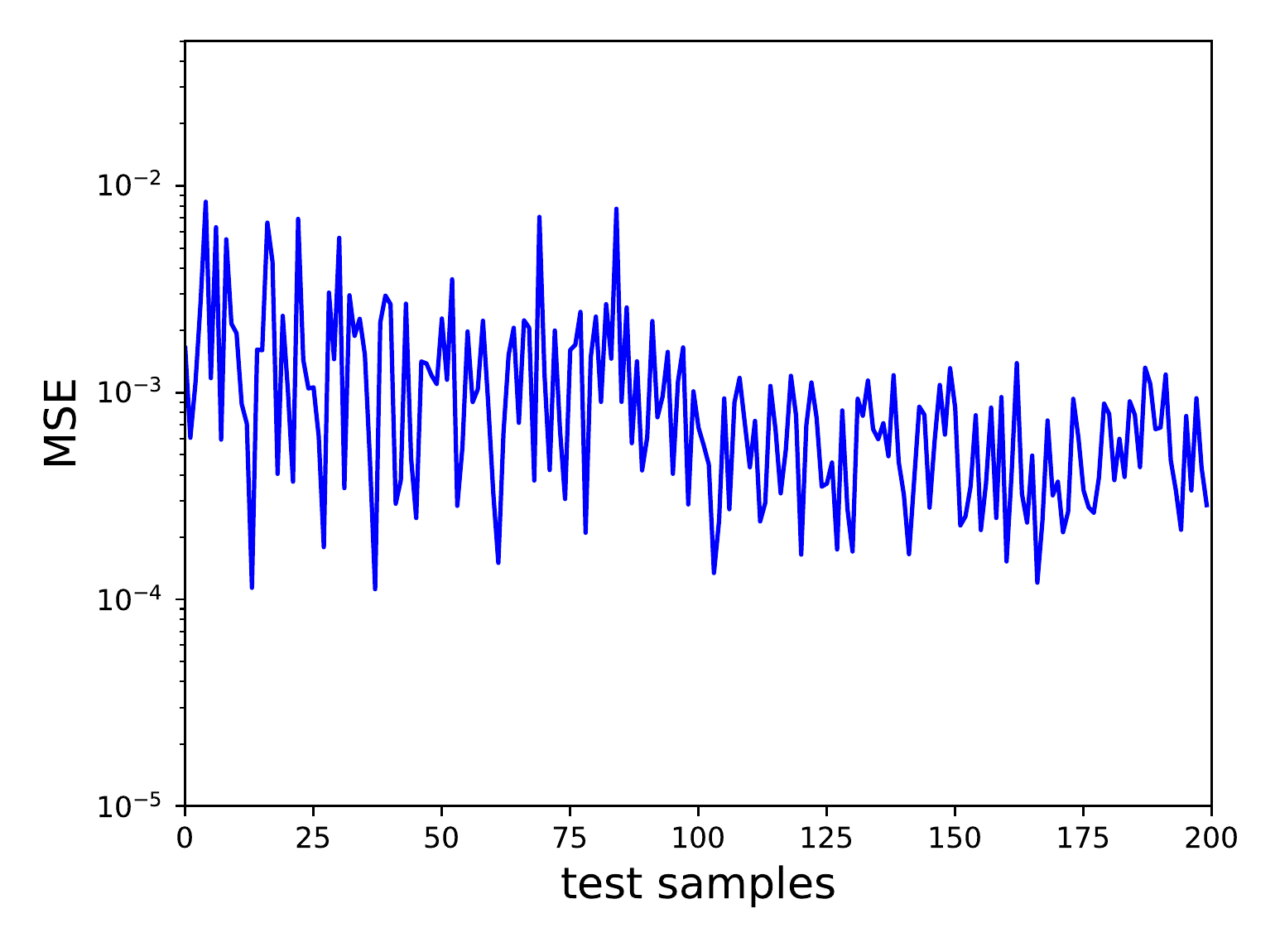}}}
	\subfigure[]{\scalebox{0.4}[0.4]{\includegraphics{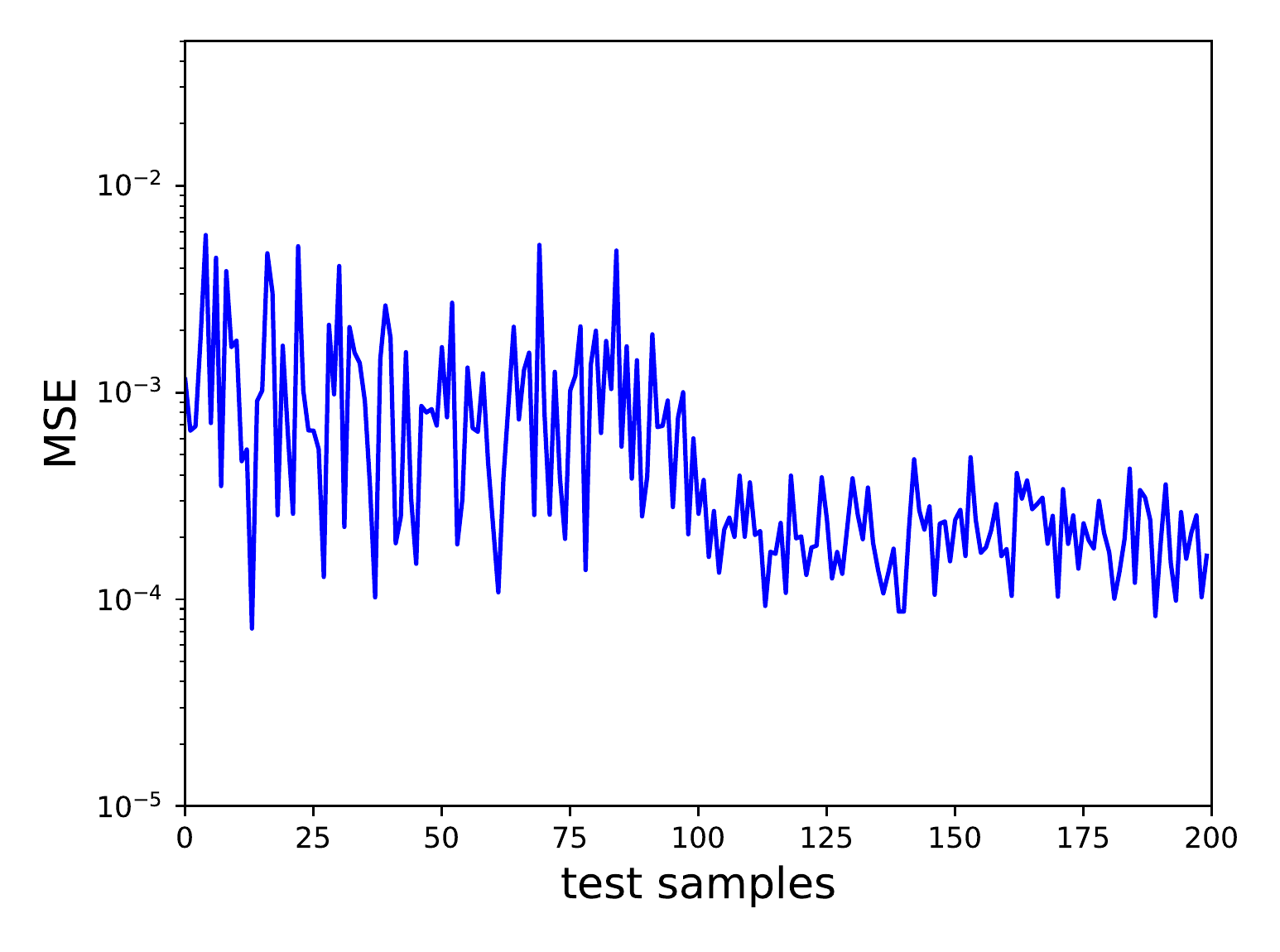}}}
	\caption{Mean squared error (MSE) on the test set of 200 samples. The first 100 samples correspond to input images with artifacts, and the second 100 samples correspond to artifact-free images with $\boldsymbol{r}_s=\boldsymbol{0}$. (a) MSE for the convex U-net $\Phi_{\Theta}^c$; (b) MSE for the general U-net $\Phi_{\Theta}$.} \label{Fig8}
\end{figure}
\FloatBarrier
Figure \ref{Fig8}\,(a) shows the values of MSE for the trained convex U-net on the 200 test samples. In next subsection, we will illustrate the performance of \ref{iterNETT3} and compare it with the standard NETT algorithm \cite{li2020nett}. Since the NETT  algorithm does not require convexity of the neural network, we train a general U-net without the constraint of convexity, denoted as $\Phi_{\Theta}$. The training set, validation set, and test set are taken the same as those of convex U-net $\Phi_{\Theta}^c$. Figure \ref{Fig8}\,(b) shows the values of MSE for the general U-net on the 200 test samples. It concludes that the performance of the convex U-net $\Phi_{\Theta}^c$ is almost comparable with that of the general U-net $\Phi_{\Theta}$ on the test set.

\subsection{Reconstruction results}
The \ref{iterNETT3}  algorithm is implemented according to Subsection \ref{sec:implementation}.  With the convex U-net $\Phi_{\Theta}^c$, we build the uniformly convex regularizer $\Reg(\bx)$ for \ref{iterNETT3} as shown in Subsection \ref{subsec:CovRegul}, where we set $a=10^{-3}$. The sequence of regularization parameters $\alpha_n$ is taken as $2^{-n}$, $n\in\mathbb{N}^+$; the initial guess is $\bx_0=\frac{1}{N}$ with $N=256^2$, and $\bxi_0 \in\partial \Reg(\bx_0)$. The stopping rule is the discrepancy principle as shown in equation (\ref{dicrepancy_principle}), where we take the constant $\tau=1.01$.

\subsubsection{Example 1: \correct{synthetic phantom with non-overlapping ellipses}}
Figure \ref{Fig9}\,(a) shows the true model of attenuation image. It is a synthetic phantom of the same type as the phantom images in the training set and validation set, but it is not contained in those phantom images. 
\begin{figure}[!htbp]
	\centering
	\subfigure[]{\scalebox{0.4}[0.4]{\includegraphics{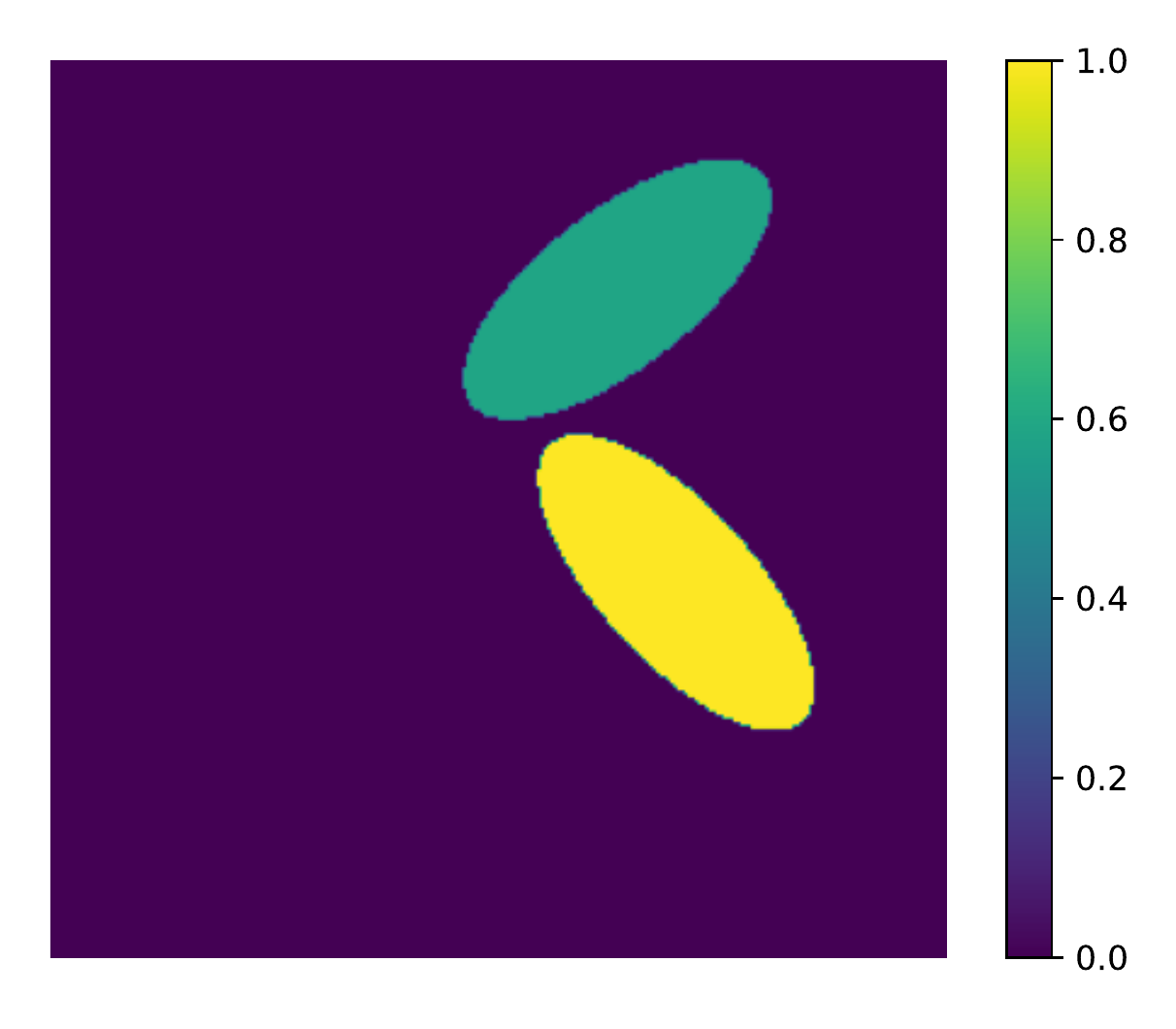}}}
	\subfigure[]{\scalebox{0.4}[0.4]{\includegraphics{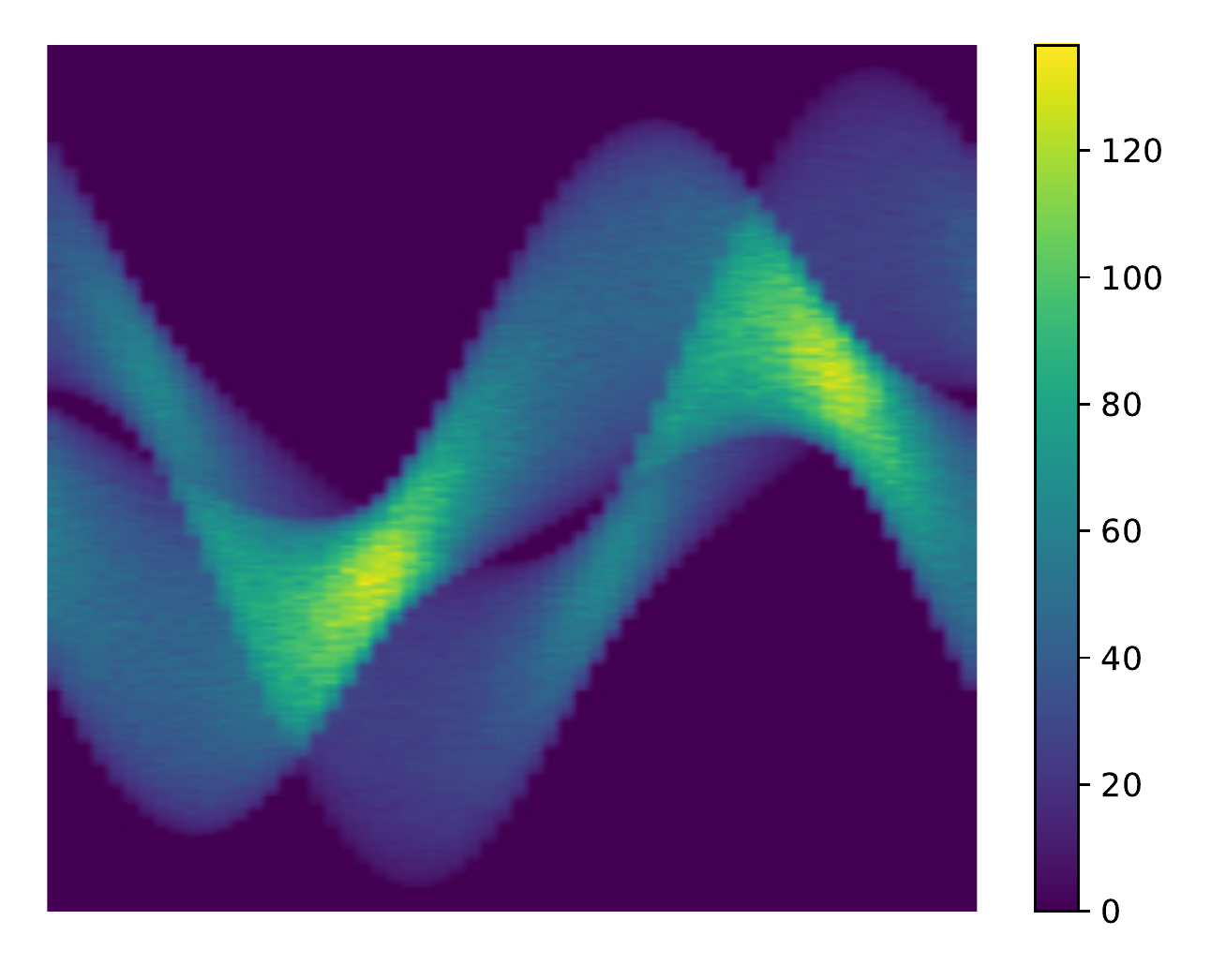}}}\\
	\subfigure[]{\scalebox{0.4}[0.4]{\includegraphics{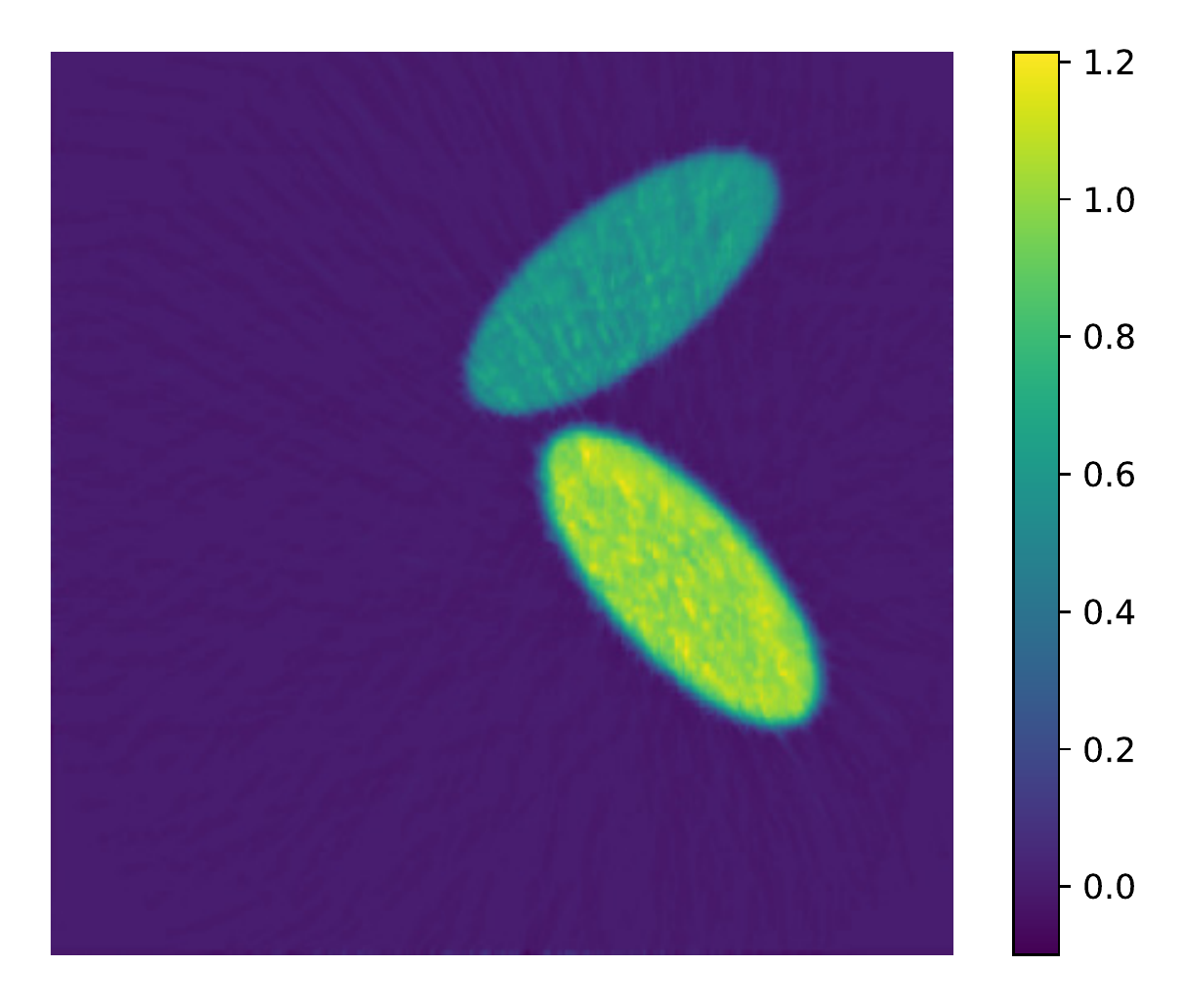}}}
	\subfigure[]{\scalebox{0.4}[0.4]{\includegraphics{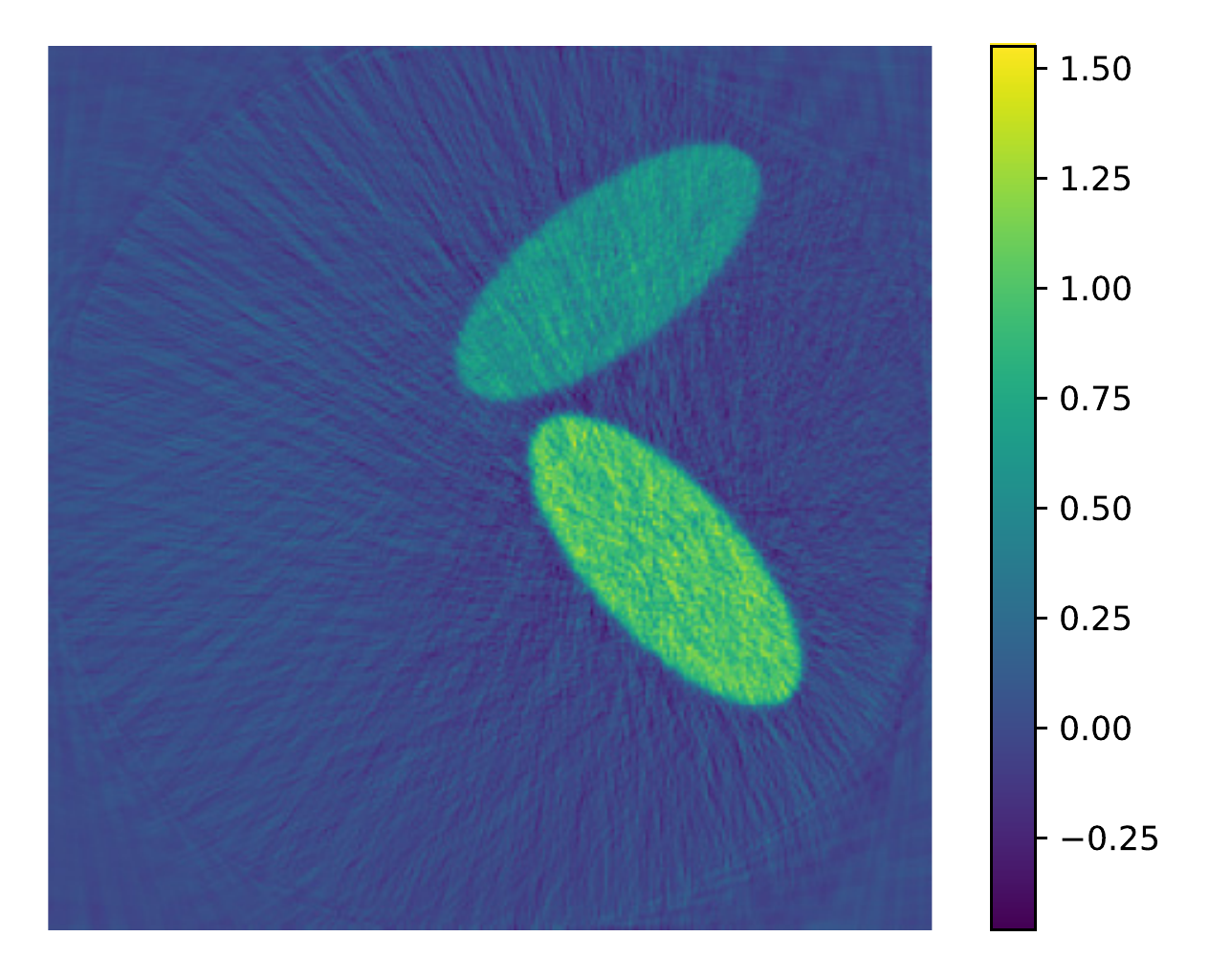}}}
	\caption{Example 1: synthetic phantom. (a) True model of attenuation image; (b) measurement data with $5\%$ Gaussian noises; (c) reconstruction result by \ref{iterNETT3}; (d) reconstruction result by ART.} \label{Fig9}
\end{figure}
\FloatBarrier
Figure \ref{Fig9}\,(b) shows the simulated measurement data, where we add $5\%$ Gaussian noises with zero mean and unit variance. The \ref{iterNETT3} algorithm is performed to reconstruct the attenuation image from the noisy measurement data. Figure \ref{Fig9}\,(c) shows the reconstruction result. Figure \ref{Fig9}\,(d) provides the reconstruction result by ART, where we have performed 5 rounds of ART iterations to get the solution. To quantitatively evaluate the performance of reconstruction, we compute the peak signal-to-noise ratio (PSNR) and the structural similarity index (SSIM), where for both a higher value means a better reconstruction (note that the range of SSIM is between $0$ and $1$); the results are shown in the second and third columns of Table \ref{TabEX1}. The \ref{iterNETT3} algorithm provides much better reconstruction for the synthetic phantom. It is capable of removing artifacts due to under-sampling and data noises while preserving the resolution of ellipses in the reconstructed image. 

As a comparison, \correct{we provide reconstruction results by a standard iterated Tikhonov (SIT) method and by the NETT algorithm \cite{li2020nett}, respectively. In the standard iterated Tikhonov (SIT) method, the regularizer $\Reg$ is taken as $\Reg(\bx)=\|\bx\|_2^2$ without the convex neural network in \ref{iterNETT3}. The iteration is the same as that of \ref{iterNETT3}, while it has a closed form in this standard situation:}
\correct{
\begin{equation}\tag*{SIT}\label{SIT}
\bx_n^\delta=\bx_{n-1}^\delta-\left(F^TF+\hat{\alpha}_n I\right)^{-1}F^T\left(F\bx_{n-1}^\delta-\by^\delta\right),
\end{equation}
}
\correct{where $F^T$ denotes the transpose of $F$, $I$ is the identity matrix, and $\hat{\alpha}_n=2M \alpha_n$; recall that $\alpha_n=2^{-n}$, and $M=256\times60$. The initial guess and the stopping rule are the same as those of \ref{iterNETT3}. Figure \ref{FigSIT} shows the reconstruction result by \ref{SIT}, and Table \ref{TabEX1} lists the values of PSNR and SSIM for this reconstruction. It shows that the \ref{SIT} algorithm has a better performance than ART, but the improvement is inadequate comparing to the performance of \ref{iterNETT3}.}

\correct{In the NETT algorithm,} the attenuation image is recovered by solving the following minimization problem,
\begin{equation}\tag*{NETT}\label{NETT2}
\bx_\alpha^\delta=\underset{\bx\in X}{\argmin}\frac{1}{2M}\|F \bx - \by^\delta\|_2^2+\alpha\|\Phi_{\Theta}(\bx) \|_2^2,
\end{equation}
where $\Phi_\Theta$ denotes the general U-net trained in section \ref{AppTrainUnet}, and $\alpha$ is a fixed parameter controlling the amount of regularization. As stated above, $\Phi_\Theta$ is trained without the constraint of convexity, since the \ref{NETT2} algorithm does not require convexity of neural network in general. We perform the \ref{NETT2} algorithm with a series of different values of $\alpha$, although we are not trying to tune the parameter $\alpha$ exhaustively. Figure \ref{Fig10} shows the reconstruction results with $\alpha=0.001, 0.01, 0.05$ and $0.1$, respectively. Table \ref{TabEX1} lists the values of PSNR and SSIM for those reconstruction results. It shows that the \ref{NETT2} algorithm achieves the best reconstruction among the four solutions as $\alpha=0.05$. Comparing the results in Figure \ref{Fig9}\,(c) and Figure \ref{Fig10}, and considering the values of PSNR and SSIM listed in Table \ref{TabEX1}, we conclude that the \ref{iterNETT3} algorithm can achieve a comparable reconstruction as \ref{NETT2} without the procedure of tuning the regularization parameter manually.

\begin{figure}[h]
	\centering
	{\scalebox{0.4}[0.4]{\includegraphics{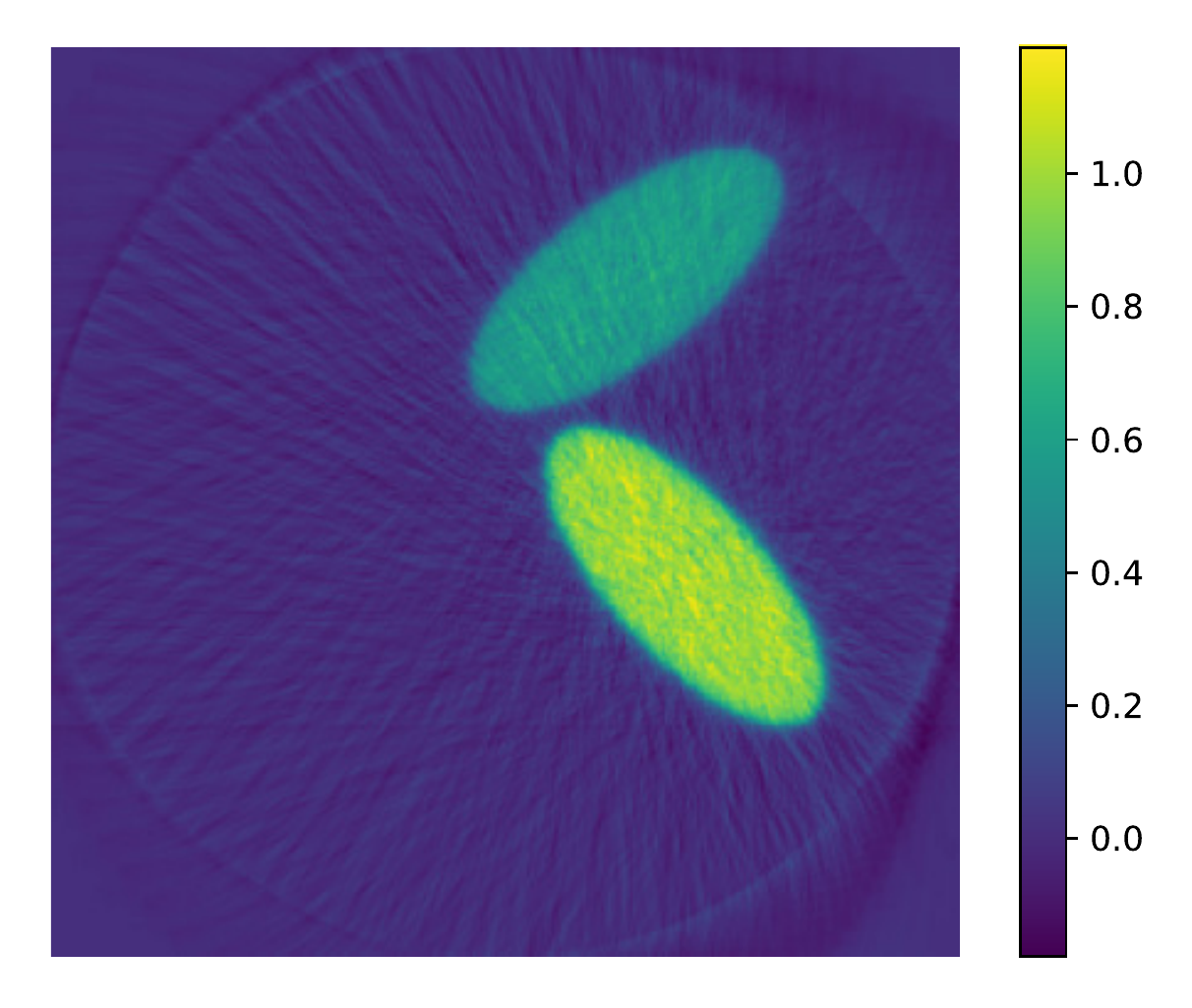}}}
	\caption{\correct{Example 1: synthetic phantom. Reconstruction result by the standard iterated Tikhonov (SIT) method.}} \label{FigSIT}
\end{figure}

\begin{figure}[h]
	\centering
	\subfigure[]{\scalebox{0.4}[0.4]{\includegraphics{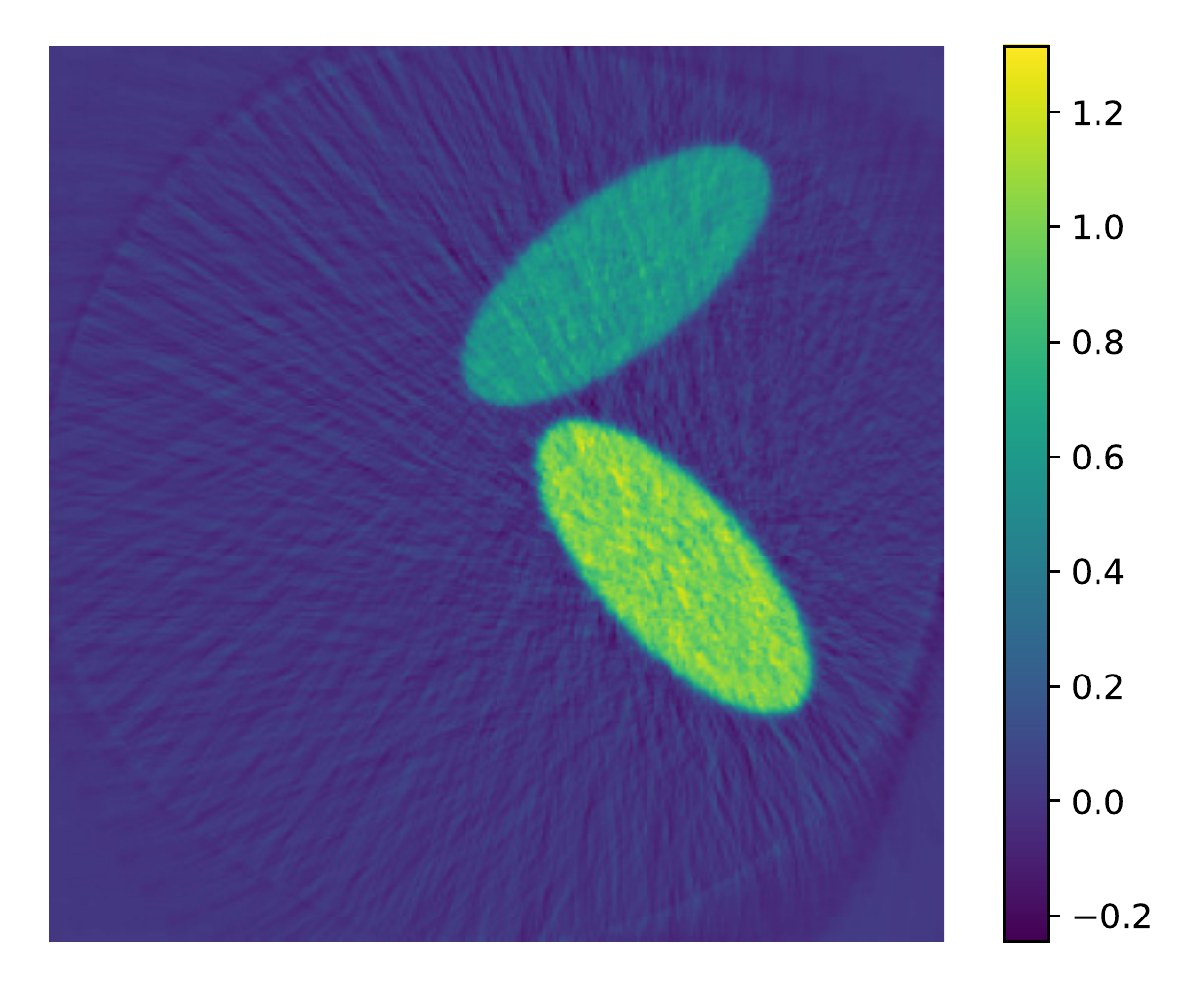}}}
	\subfigure[]{\scalebox{0.4}[0.4]{\includegraphics{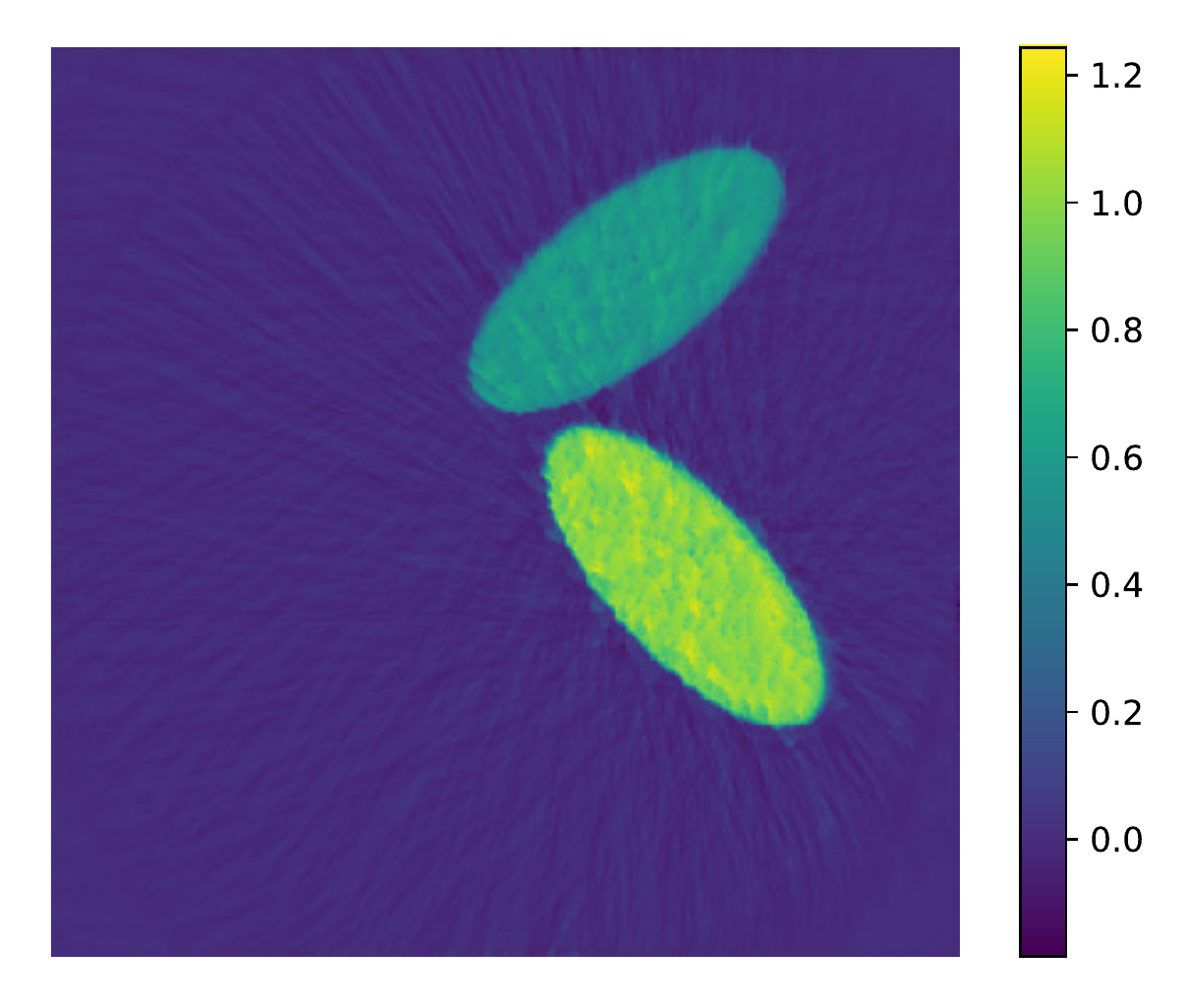}}}\\
	\subfigure[]{\scalebox{0.4}[0.4]{\includegraphics{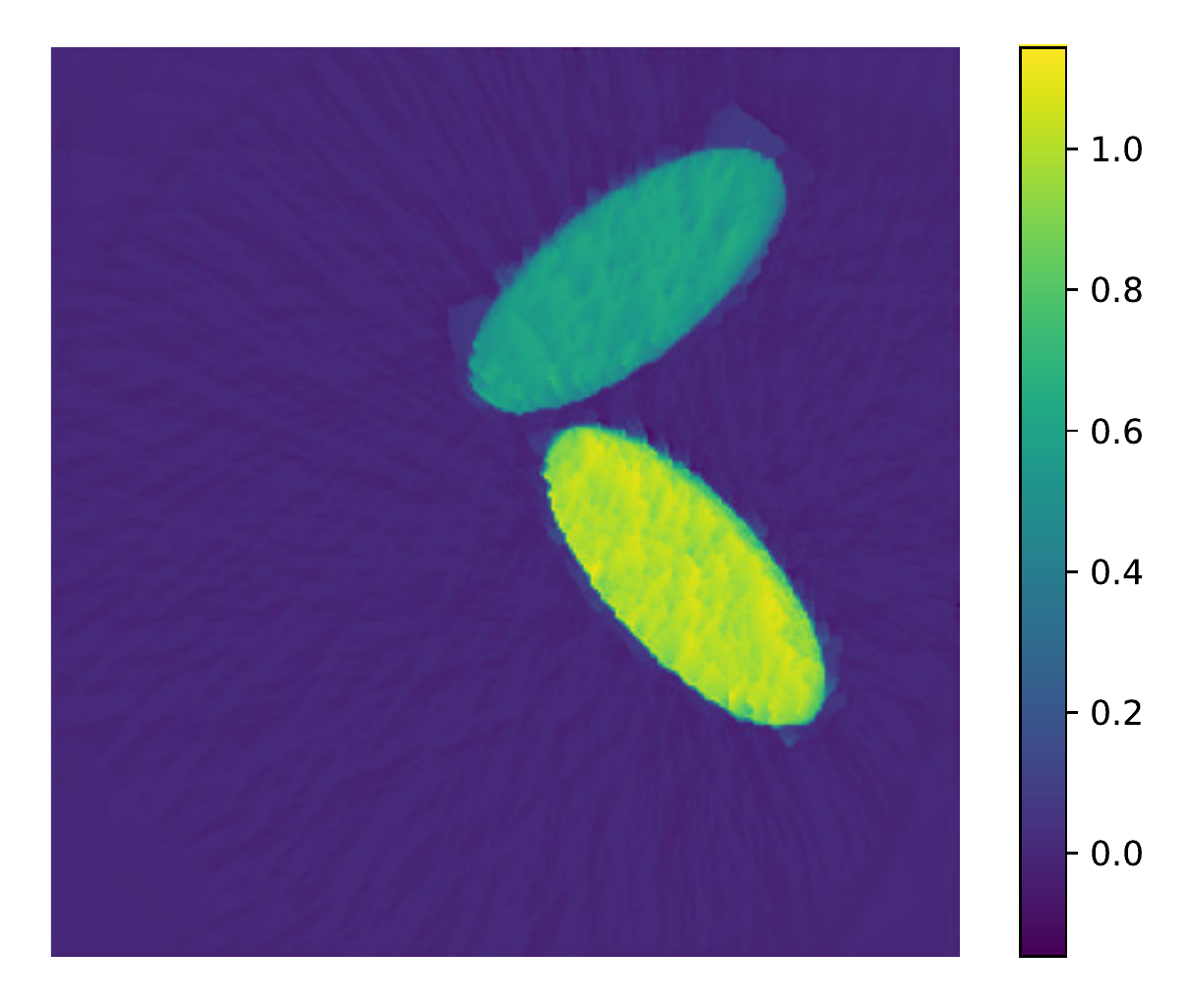}}}
	\subfigure[]{\scalebox{0.4}[0.4]{\includegraphics{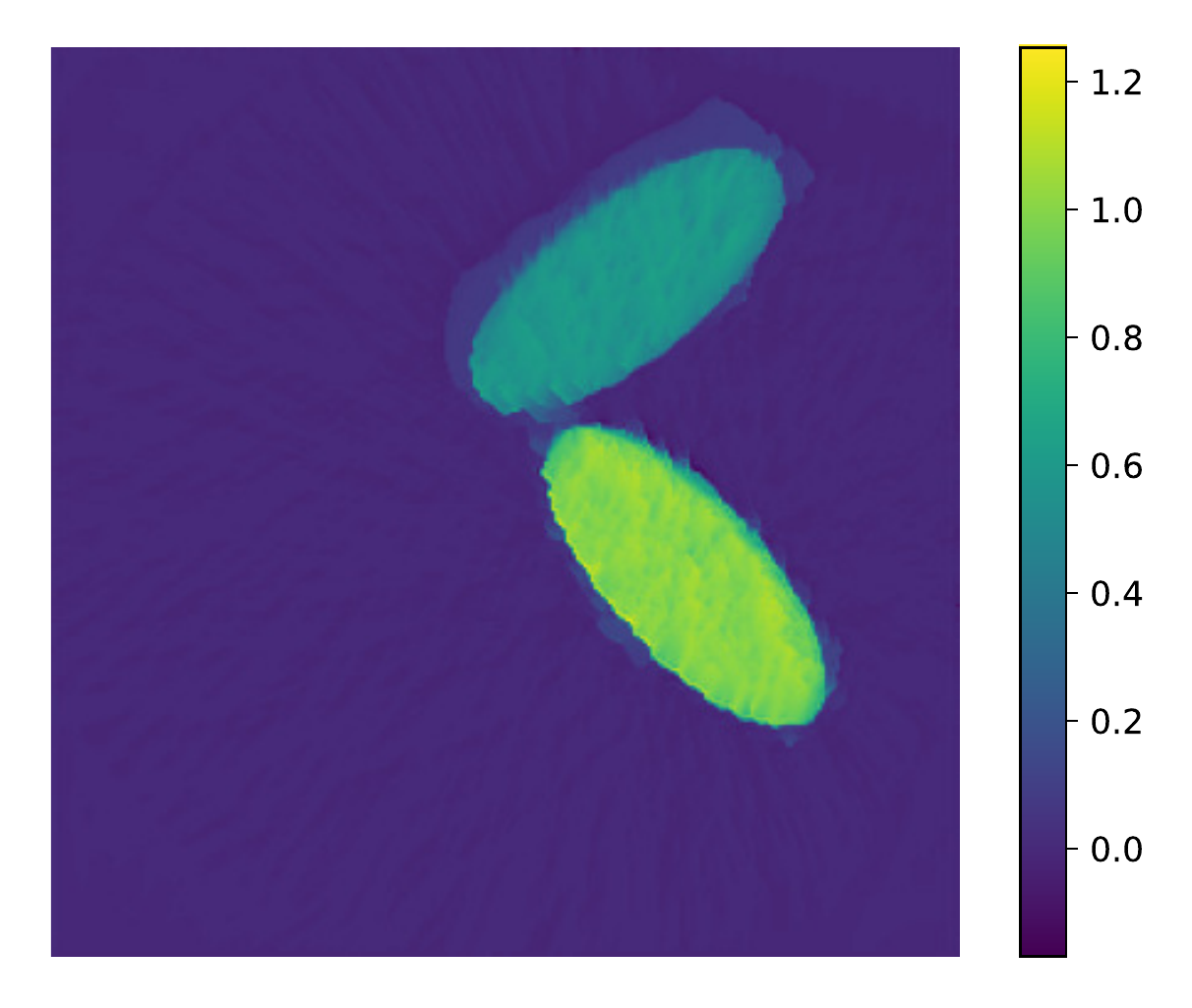}}}
	\caption{Example 1: synthetic phantom. Reconstruction results by \ref{NETT2} with $\alpha=0.001, 0.01, 0.05$ and $0.1$, respectively. (a) $\alpha=0.001$; (b) $\alpha=0.01$; (c) $\alpha=0.05$; (d) $\alpha=0.1$.} \label{Fig10}
\end{figure}

	\begin{table}[H]
		\centering
		\begin{tabular}{cccccccc} 
			\toprule
			\multirow{2}{*}{} \hspace{2.5pt} & \hspace{2.5pt} \multirow{2}{*}{iNETT} \hspace{2.5pt} & \hspace{2.5pt} \multirow{2}{*}{ART} & \hspace{2.5pt} \multirow{2}{*}{SIT} \hspace{2.5pt} & \multicolumn{4}{c}{\begin{tabular}[c]{@{}c@{}}NETT \end{tabular}}  \\ 
			\cdashline{5-8}
			&          \hspace{2.5pt}           &                     &                 & $\alpha=0.001$ & $\alpha=0.01$  & $\alpha=0.05$  & $\alpha=0.1$                     \\ 
			\midrule
			PSNR      \hspace{2.5pt}        & \textbf{21.30}   & 14.48  & 17.97    & 16.85 & 18.23 & 19.34 & 18.67                   \\
			SSIM      \hspace{2.5pt}        & \textbf{0.84}     & 0.54    & 0.66     & 0.62  & 0.73  & 0.79  & 0.79                          \\
			\bottomrule
		\end{tabular}\caption{\correct{(Example 1) PSNR and SSIM for the reconstruction results by \ref{iterNETT3}, ART, \ref{SIT} and \ref{NETT2}, respectively.}}
		\label{TabEX1}
	\end{table}

\subsubsection{\correct{Example 2: synthetic phantom with overlapping ellipses}}
\correct{Figure \ref{FigEX2_1}\,(a) shows the true model of attenuation image. Again, it is a synthetic phantom of the same type as the phantom images in the training and validation sets, but it is not contained in them. }
\begin{figure}[!htbp]
	\centering
	\subfigure[]{\scalebox{0.4}[0.4]{\includegraphics{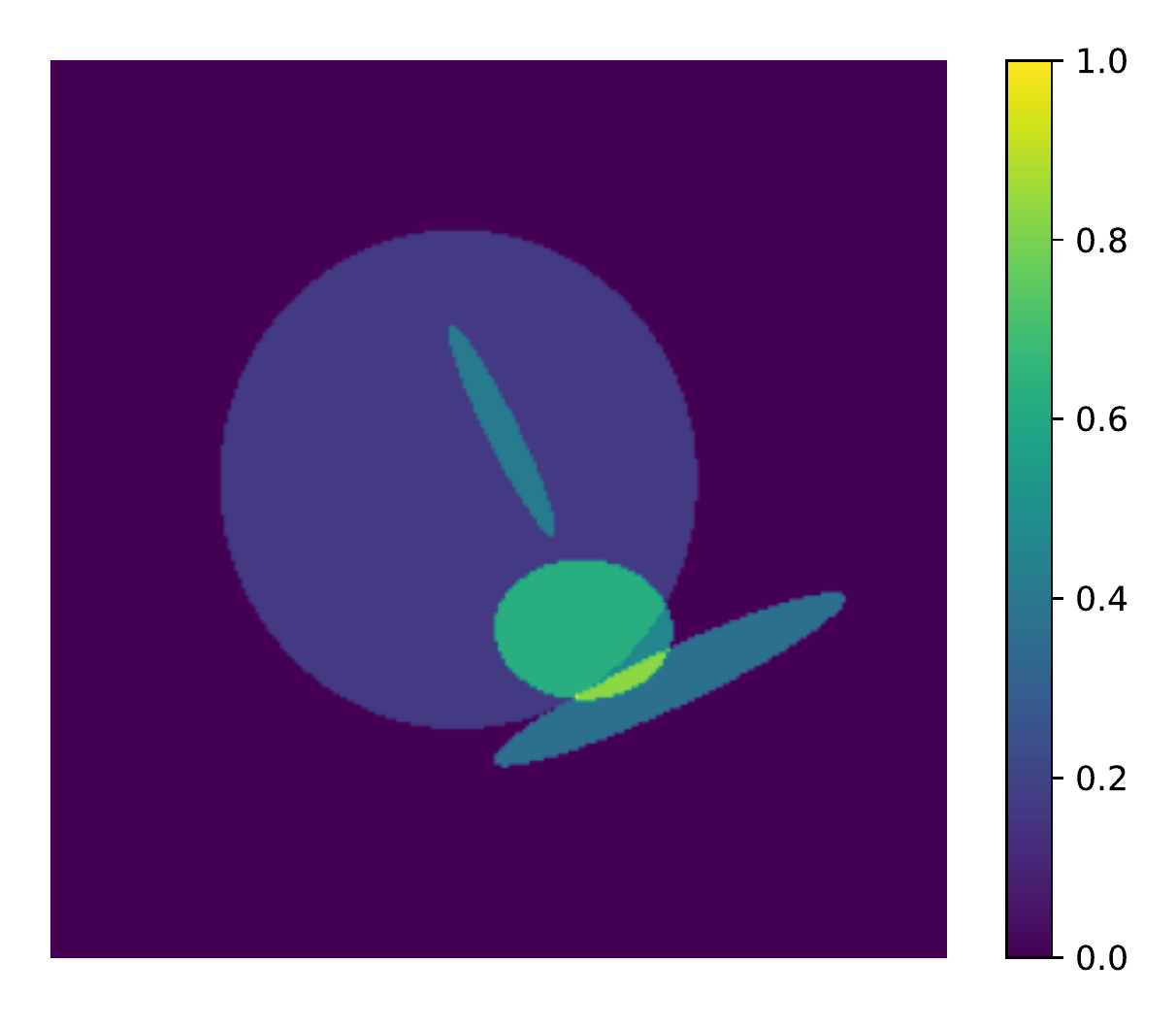}}}
	\subfigure[]{\scalebox{0.4}[0.4]{\includegraphics{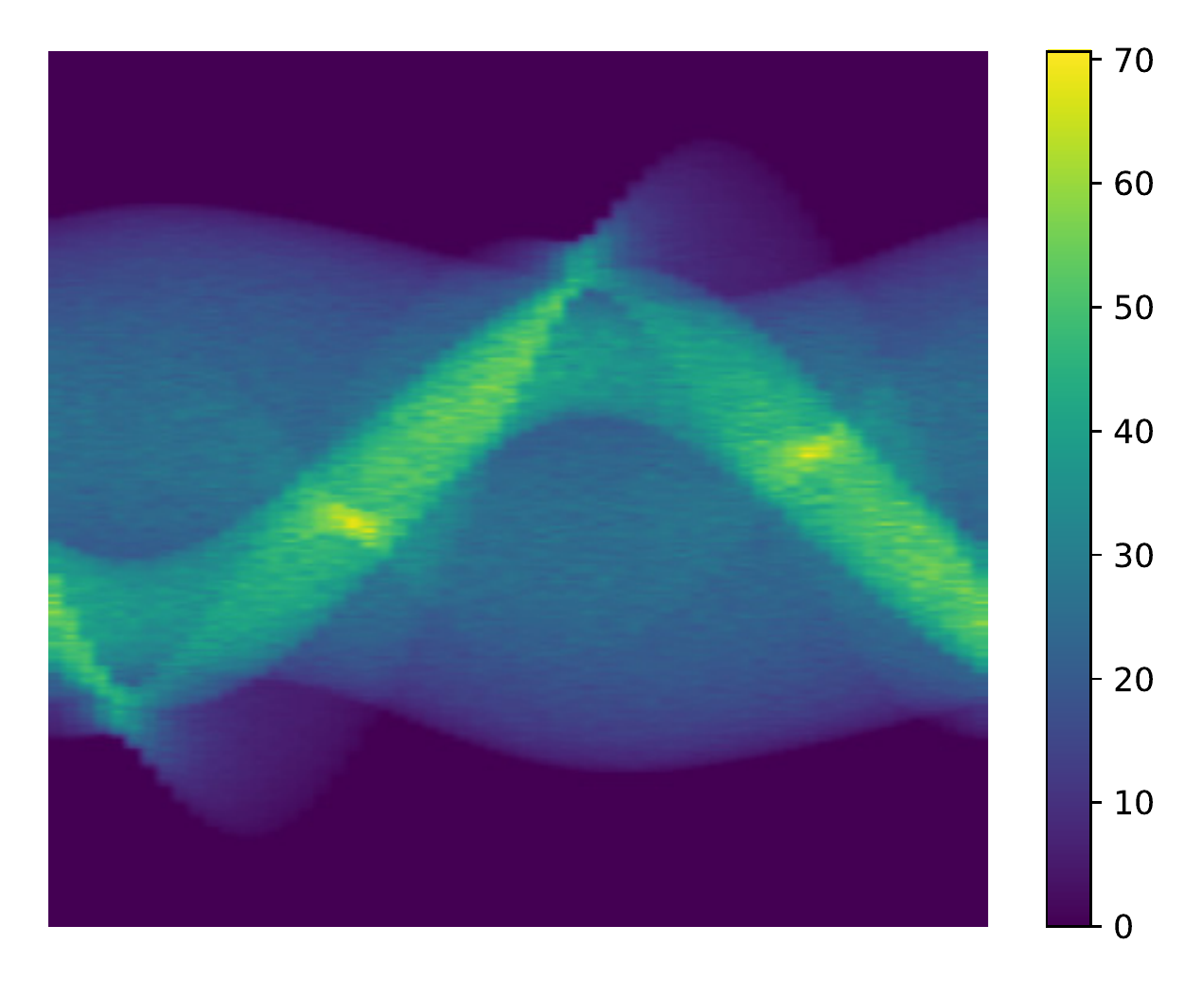}}}\\
	\subfigure[]{\scalebox{0.4}[0.4]{\includegraphics{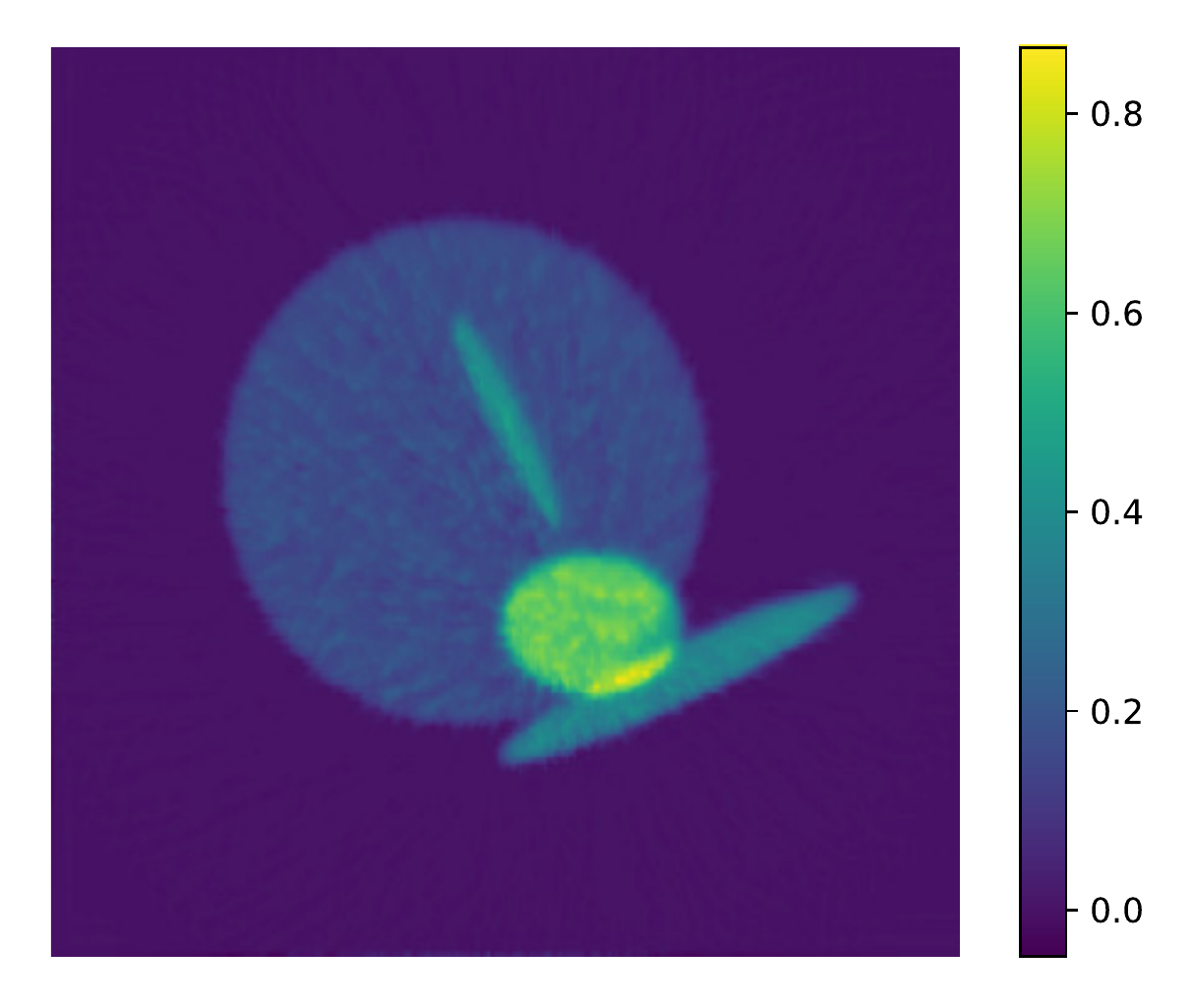}}}
	\subfigure[]{\scalebox{0.4}[0.4]{\includegraphics{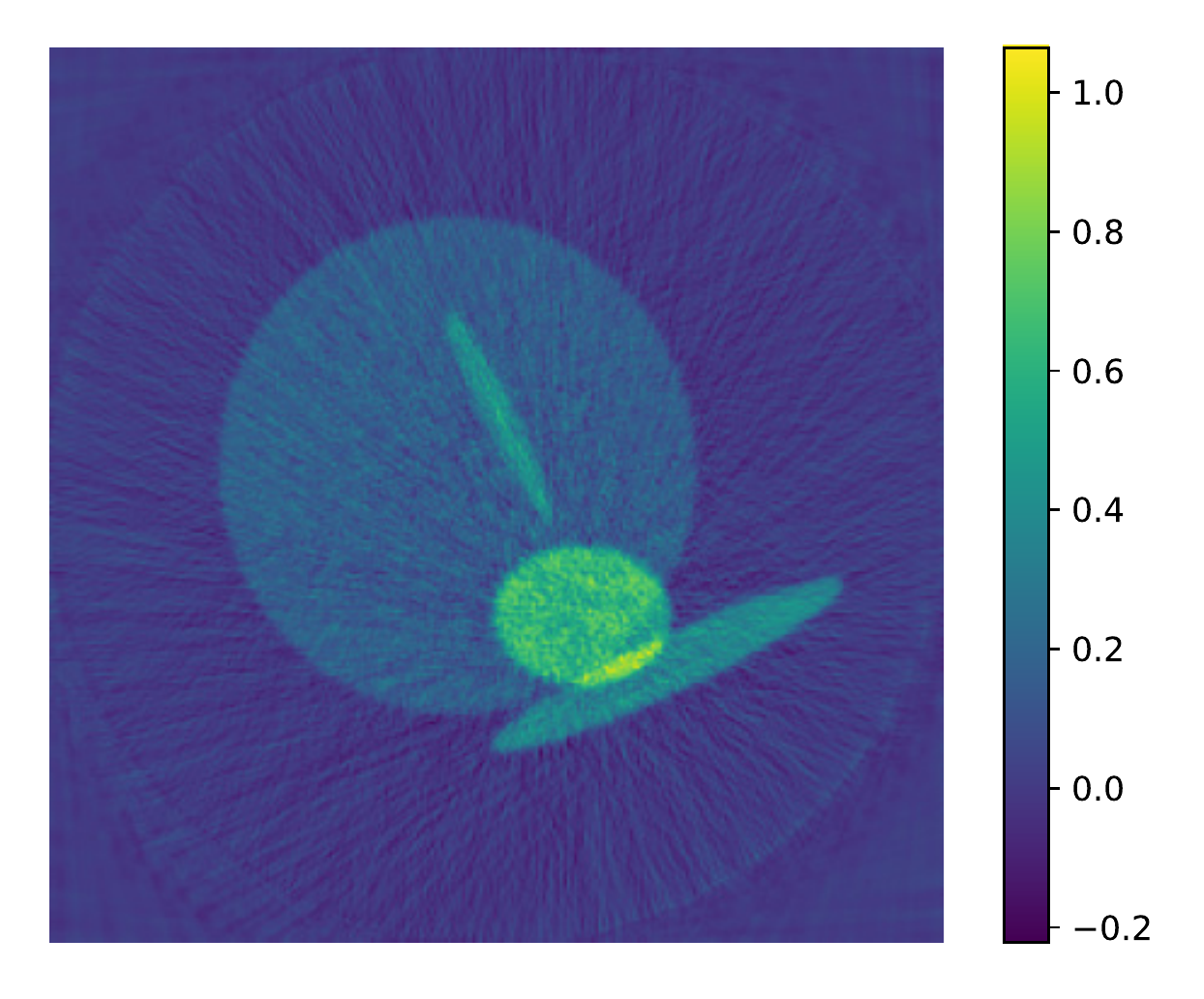}}}
	\caption{\correct{Example 2: synthetic phantom. (a) True model of attenuation image; (b) measurement data with $5\%$ Gaussian noises; (c) reconstruction result by \ref{iterNETT3}; (d) reconstruction result by ART.}} \label{FigEX2_1}
\end{figure}

\correct{Figure \ref{FigEX2_1}\,(b) shows the simulated measurement data, where we add $5\%$ Gaussian noises with zero mean and unit variance. Figure \ref{FigEX2_1}\,(c) shows the reconstruction result by the \ref{iterNETT3} algorithm. Figure \ref{FigEX2_1}\,(d) provides the reconstruction result by ART, where we have performed 5 rounds of ART iterations to get the solution. To quantitatively evaluate the performance of reconstruction, we compute the values of PSNR and SSIM as well; the results are shown in the second and third columns of Table \ref{TabEX2}. It shows that the \ref{iterNETT3} algorithm provides much better reconstruction for the synthetic phantom. It is capable of removing artifacts in the reconstructed image and achieving higher values of PSNR and SSIM.}

\correct{As a comparison, we provide reconstruction results by \ref{SIT} and \ref{NETT2}, respectively. The solution by \ref{SIT} is shown in Figure \ref{FigEX2_2}, and the values of PSNR and SSIM are listed in the fourth column of Table \ref{TabEX2}. The \ref{SIT} algorithm has a better performance than ART, but the improvement is inadequate comparing to the performance of \ref{iterNETT3}. The reconstruction results by \ref{NETT2} are shown in Figure \ref{FigEX2_3}, where we perform the \ref{NETT2} algorithm with $\alpha=0.001, 0.01, 0.05$ and $0.1$, respectively. The PSNR and SSIM of these reconstruction results are listed in Table \ref{TabEX2}. Comparing the results in Figure \ref{FigEX2_1}\,(c) and Figure \ref{FigEX2_3}, and considering the values of PSNR and SSIM listed in Table \ref{TabEX2}, we conclude that the \ref{iterNETT3} algorithm can achieve a comparable (if not better) reconstruction as \ref{NETT2} without tuning the regularization parameter exhaustively.
}

\begin{figure}[H]
	\centering
	{\scalebox{0.4}[0.4]{\includegraphics{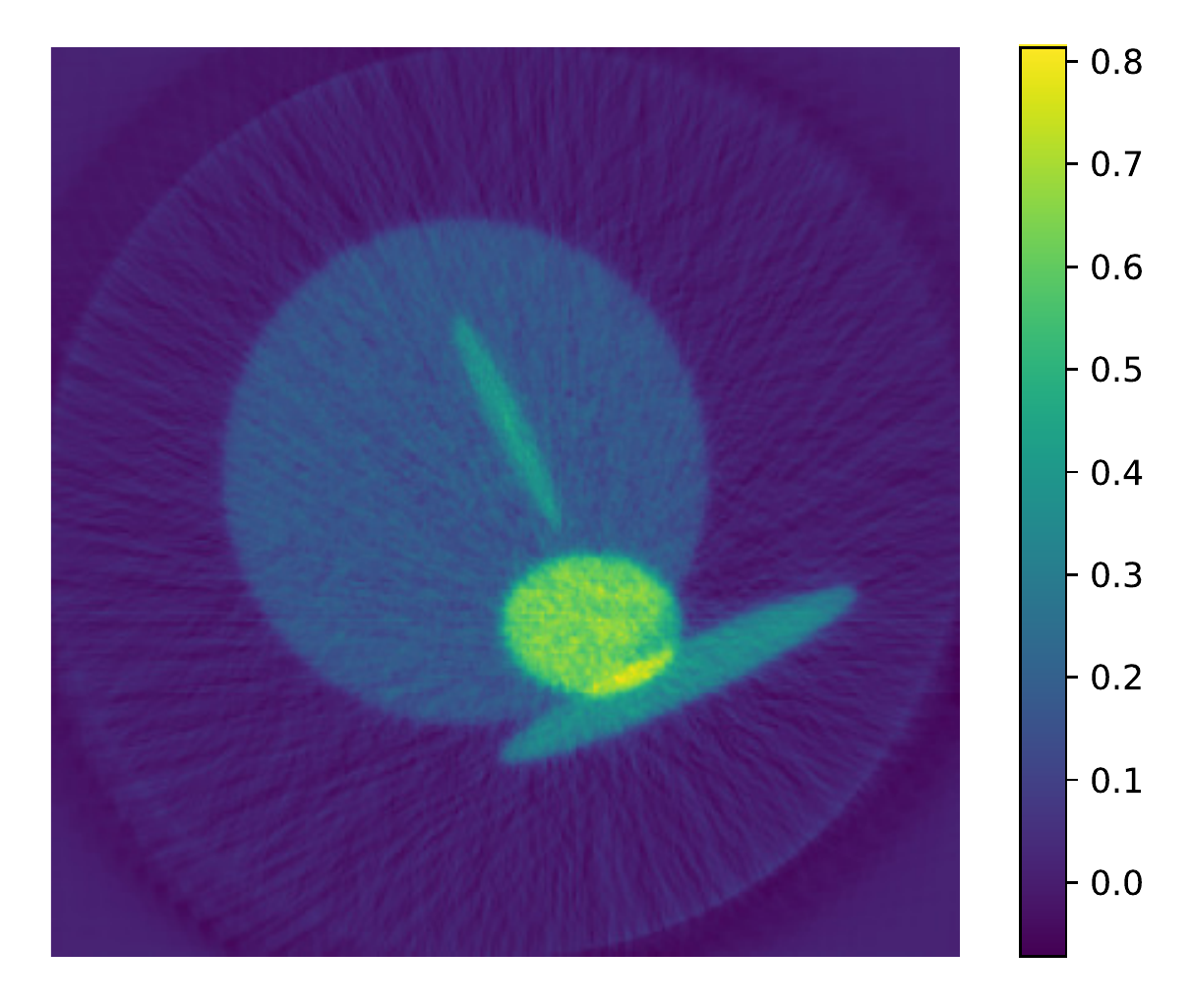}}}
	\caption{\correct{Example 2: synthetic phantom. Reconstruction result by the standard iterated Tikhonov (SIT) method.}} \label{FigEX2_2}
\end{figure}

\begin{figure}[H]
	\centering
	\subfigure[]{\scalebox{0.4}[0.4]{\includegraphics{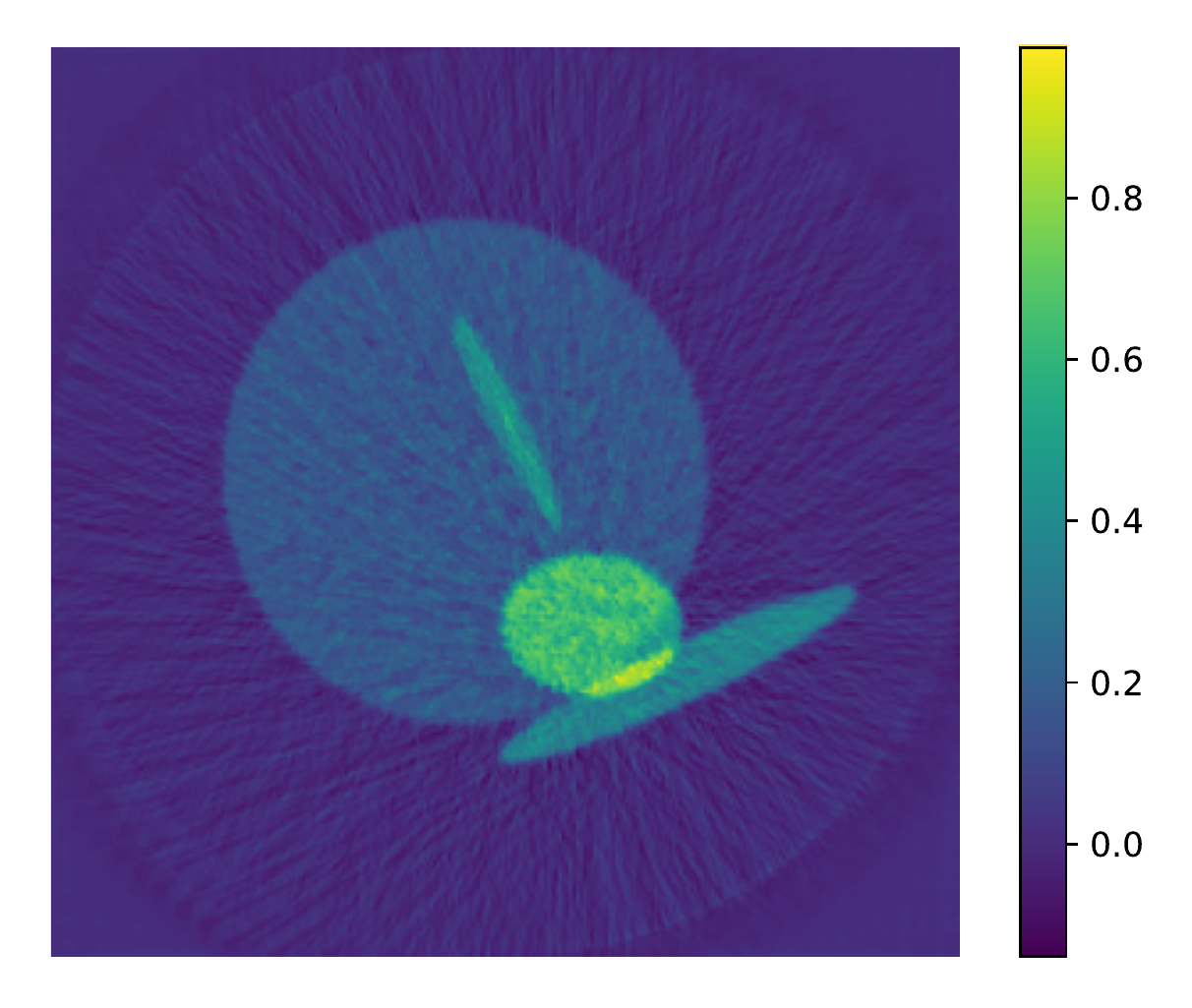}}}
	\subfigure[]{\scalebox{0.4}[0.4]{\includegraphics{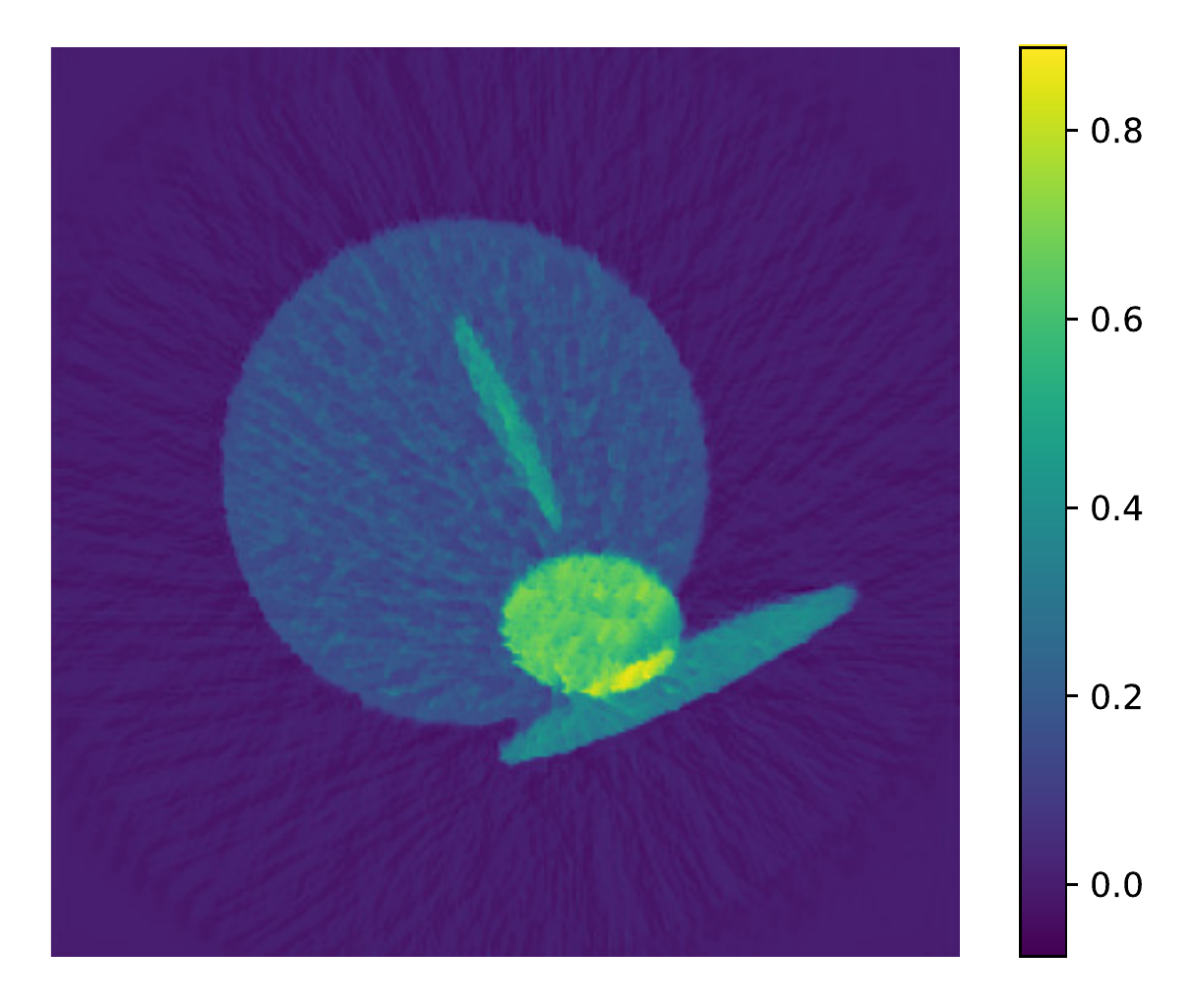}}}\\
	\subfigure[]{\scalebox{0.4}[0.4]{\includegraphics{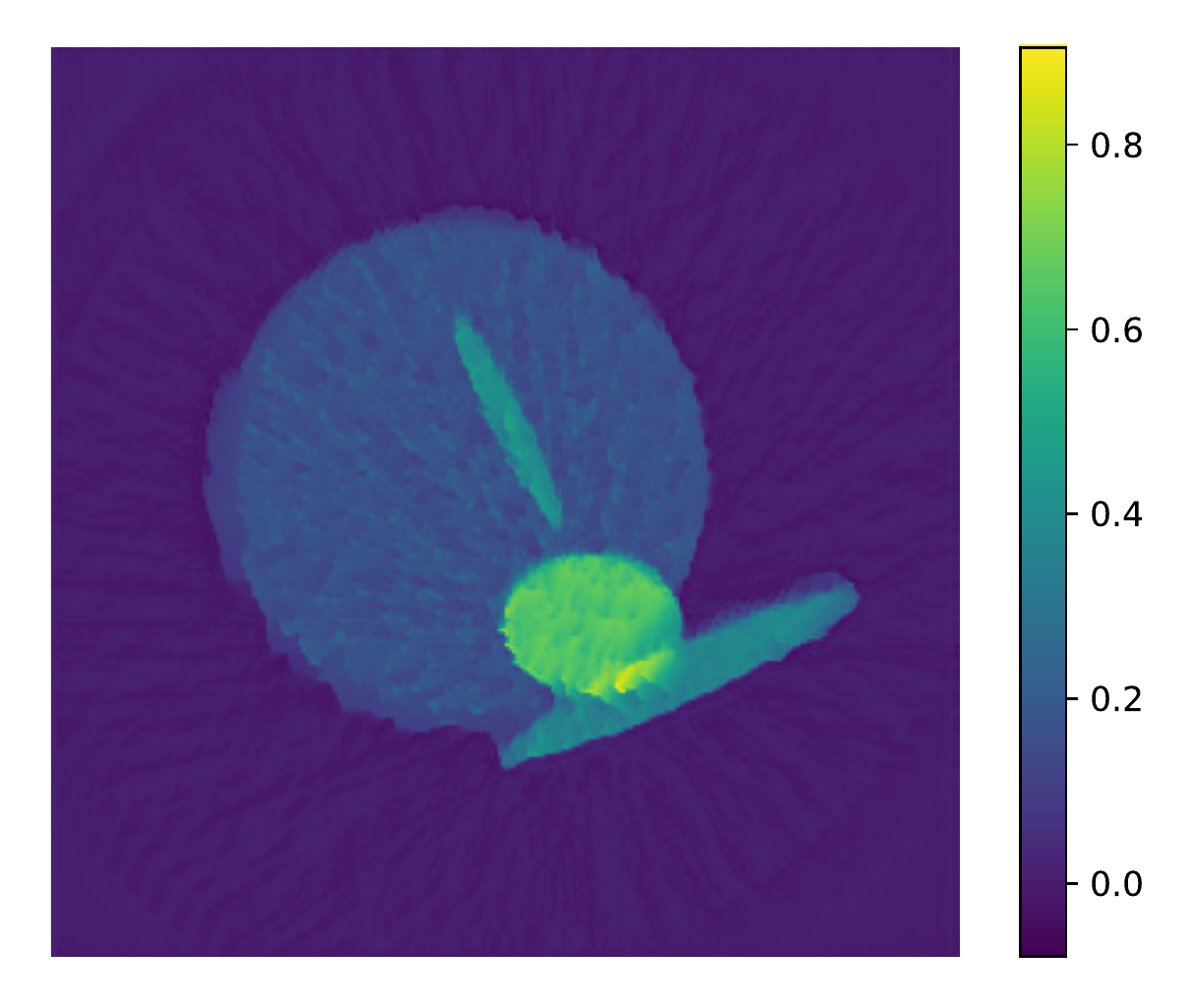}}}
	\subfigure[]{\scalebox{0.4}[0.4]{\includegraphics{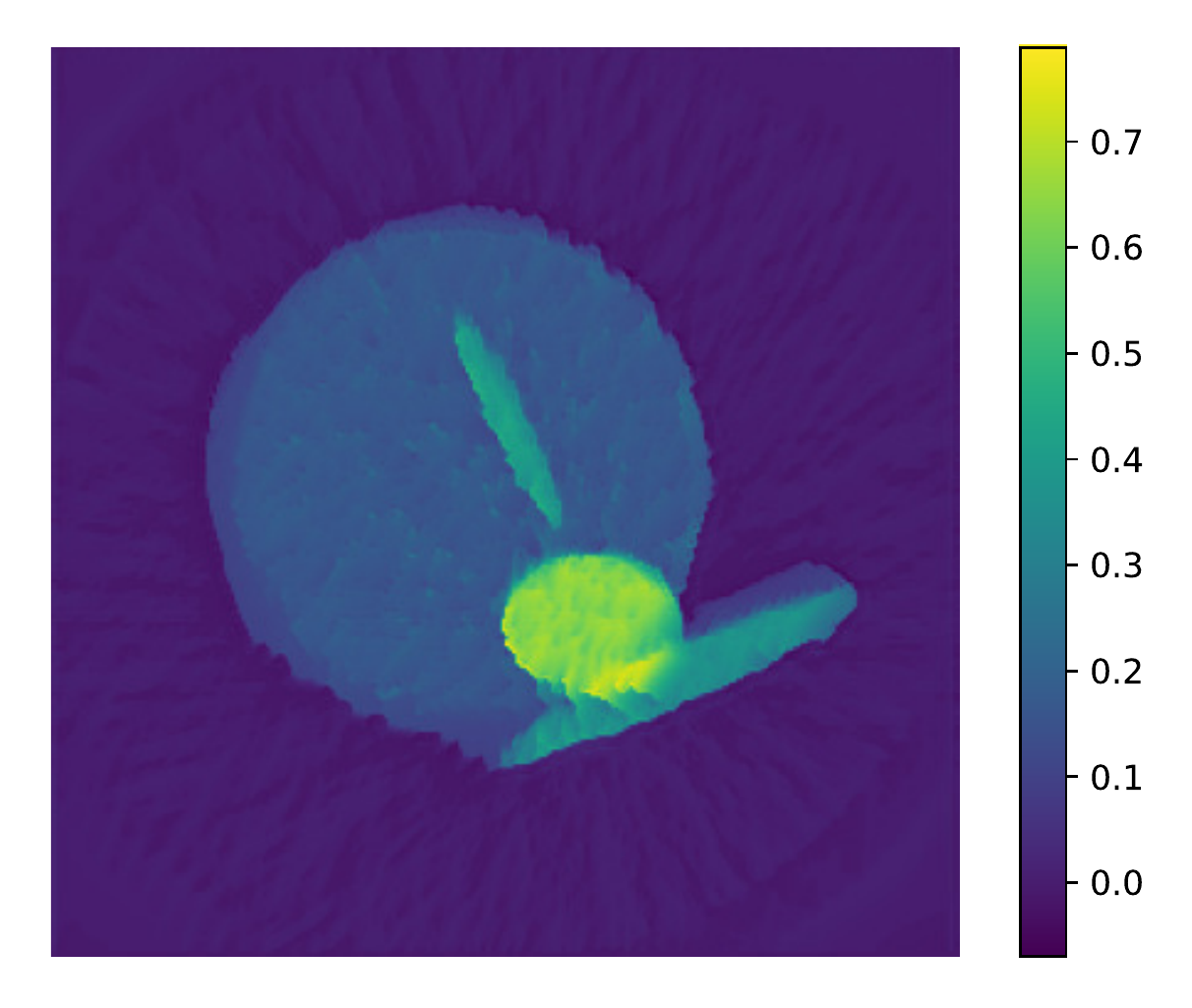}}}
	\caption{\correct{Example 2: synthetic phantom. Reconstruction results by \ref{NETT2} with $\alpha=0.001, 0.01, 0.05$ and $0.1$, respectively. (a) $\alpha=0.001$; (b) $\alpha=0.01$; (c) $\alpha=0.05$; (d) $\alpha=0.1$.}} \label{FigEX2_3}
\end{figure}

\begin{table}[H]
		\centering
		\begin{tabular}{cccccccc} 
			\toprule
			\multirow{2}{*}{} \hspace{2.5pt} & \hspace{2.5pt} \multirow{2}{*}{iNETT} \hspace{2.5pt} & \hspace{2.5pt} \multirow{2}{*}{ART} & \hspace{2.5pt} \multirow{2}{*}{SIT} \hspace{2.5pt} & \multicolumn{4}{c}{\begin{tabular}[c]{@{}c@{}}NETT \end{tabular}}  \\ 
			\cdashline{5-8}
			&          \hspace{2.5pt}           &                     &                 & $\alpha=0.001$ & $\alpha=0.01$  & $\alpha=0.05$  & $\alpha=0.1$                     \\ 
			\midrule
			PSNR      \hspace{2.5pt}        & \textbf{25.35}   & 17.26  & 21.85    & 19.51 & 22.46 & 22.36 & 21.66                   \\
			SSIM      \hspace{2.5pt}        & \textbf{0.91}     & 0.63    & 0.77     & 0.70  & 0.81  & 0.84  & 0.83                          \\
			\bottomrule
		\end{tabular}\caption{\correct{(Example 2) PSNR and SSIM for the reconstruction results by \ref{iterNETT3}, ART, \ref{SIT} and \ref{NETT2}, respectively.}}
		\label{TabEX2}
\end{table}

\subsubsection{Example 3: attenuation images of lung and myocardium}
In this example, we consider reconstructions of attenuation images which are quite different from the synthetic phantoms in the training process. 
\begin{figure}[H]
	\centering
	\subfigure[]{\scalebox{0.4}[0.4]{\includegraphics{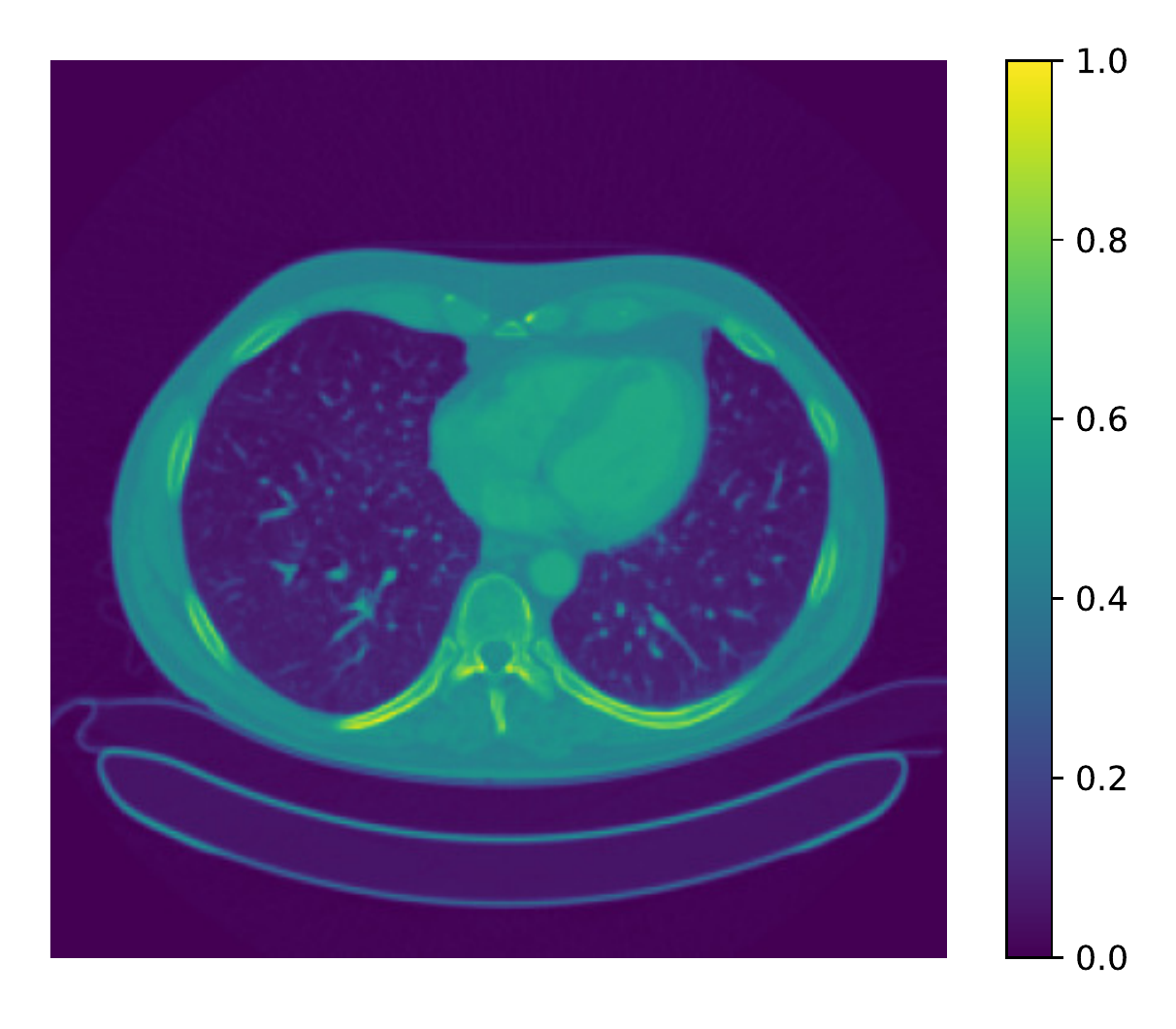}}}
	\subfigure[]{\scalebox{0.4}[0.4]{\includegraphics{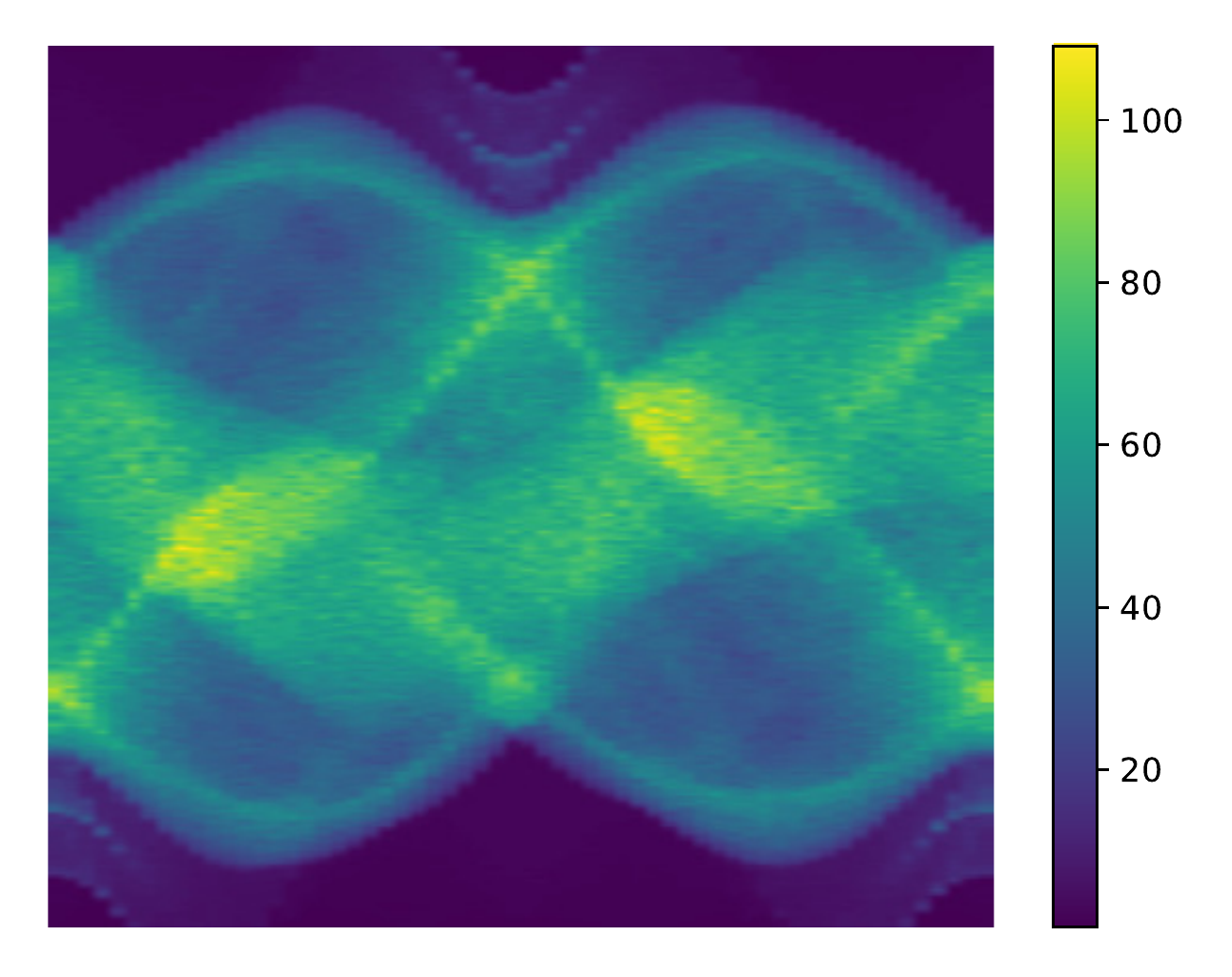}}}\\
	\subfigure[]{\scalebox{0.4}[0.4]{\includegraphics{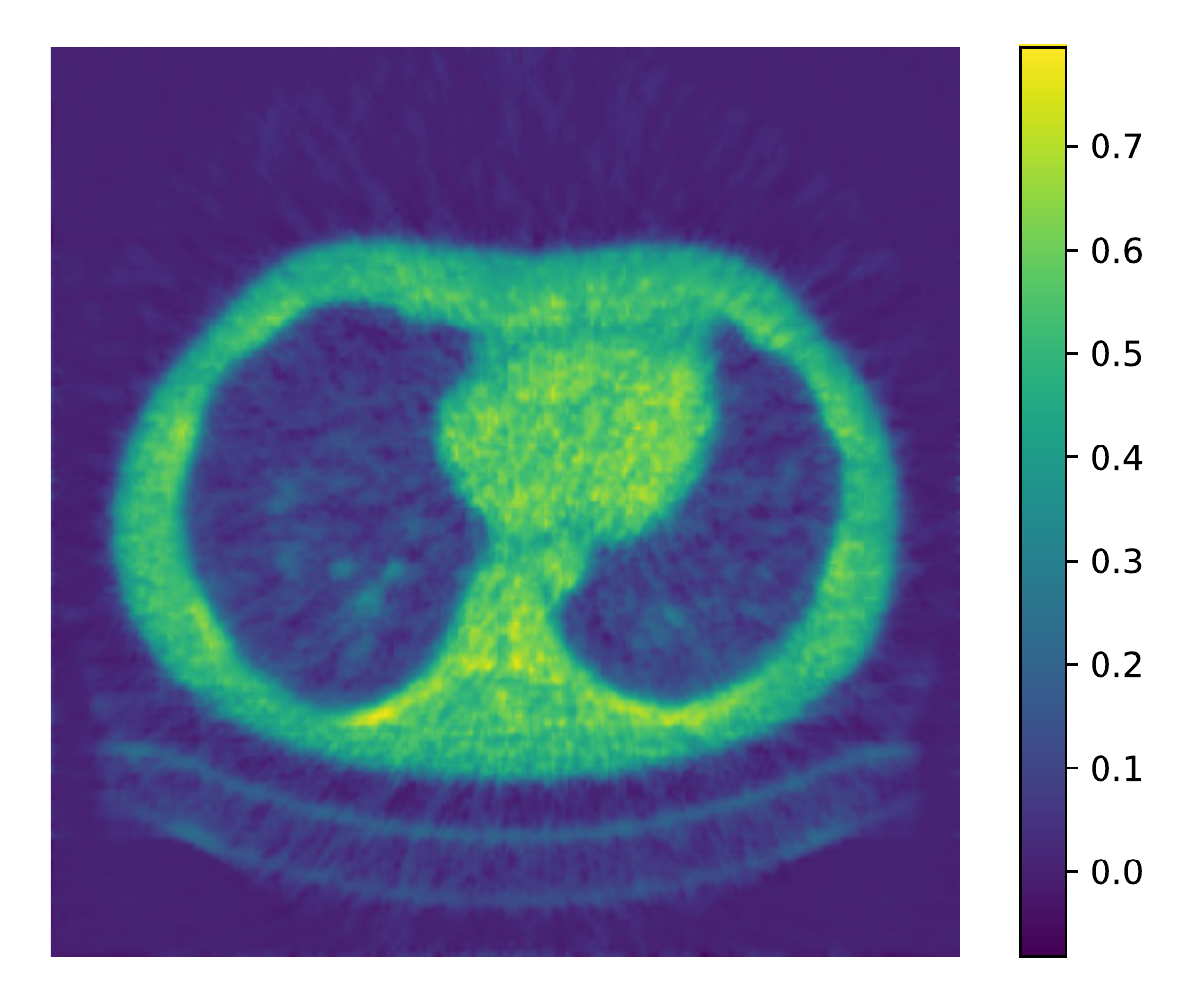}}}
	\subfigure[]{\scalebox{0.4}[0.4]{\includegraphics{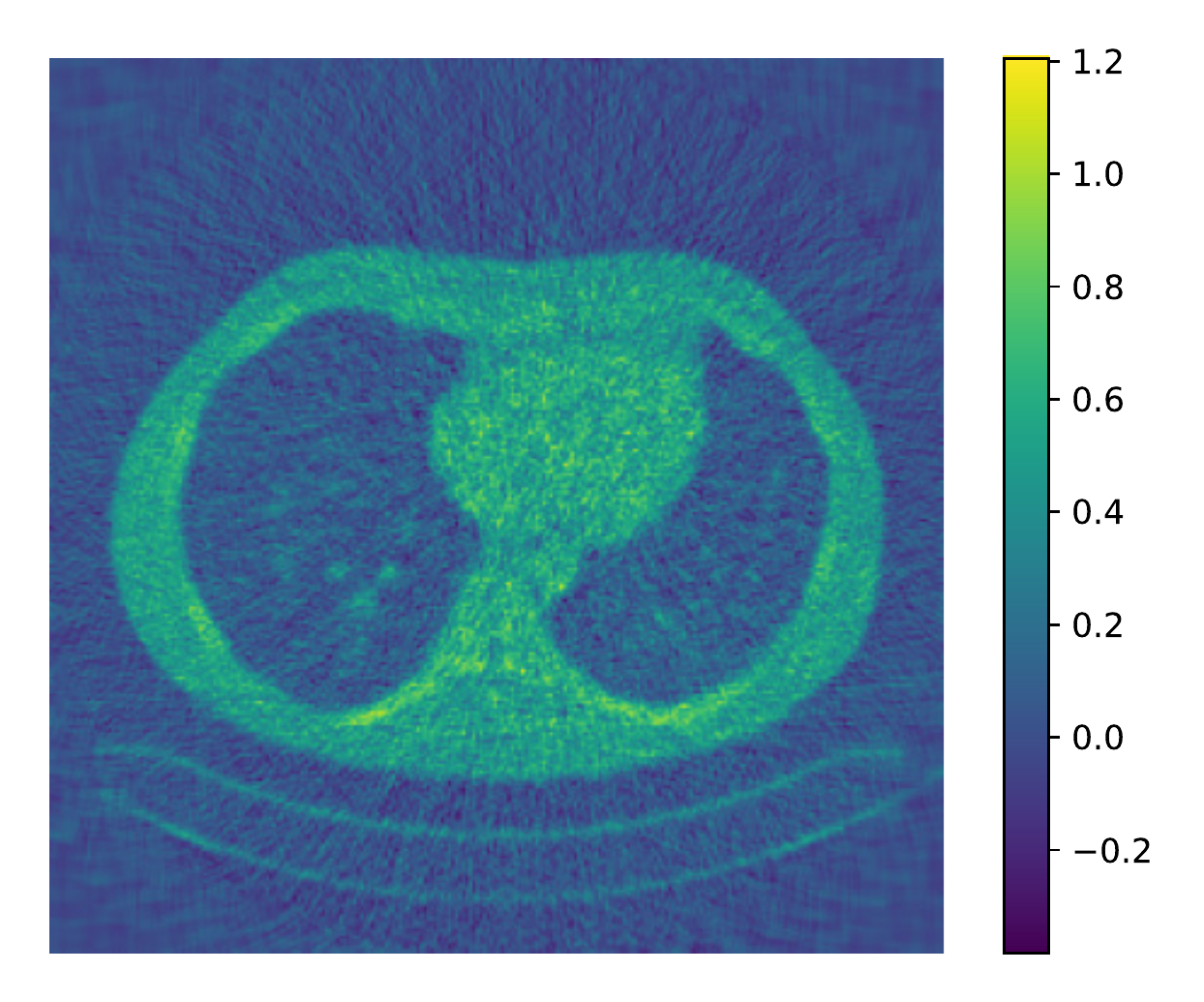}}}
	\caption{Example 3: lung imaging. (a) Attenuation image of lung; (b) measurement data with $5\%$ Gaussian noises; (c) reconstruction result by \ref{iterNETT3}; (d) reconstruction result by ART.} \label{Fig11}
\end{figure}

The true models of attenuation images are shown in Figure \ref{Fig11}\,(a) and Figure \ref{Fig12}\,(a), respectively, where Figure \ref{Fig11}\,(a) shows the attenuation image of lung, and Figure \ref{Fig12}\,(a) shows the attenuation image of myocardium. In the \ref{iterNETT3} algorithm, we employ the same convex U-net $\Phi^c_{\Theta}$ trained on synthetic phantoms as described in section \ref{AppTrainUnet}. The reconstruction results of lung imaging are provided in Figure \ref{Fig11}, where Figure \ref{Fig11}\,(b) shows the measurement data with $5\%$ Gaussian noises, Figure \ref{Fig11}\,(c) shows the recovered solution of \ref{iterNETT3}, and Figure \ref{Fig11}\,(d) shows the recovered solution of ART. Correspondingly, the reconstruction results of myocardium imaging are provided in Figure \ref{Fig12}.

\begin{figure}[!htbp]
	\centering
	\subfigure[]{\scalebox{0.4}[0.4]{\includegraphics{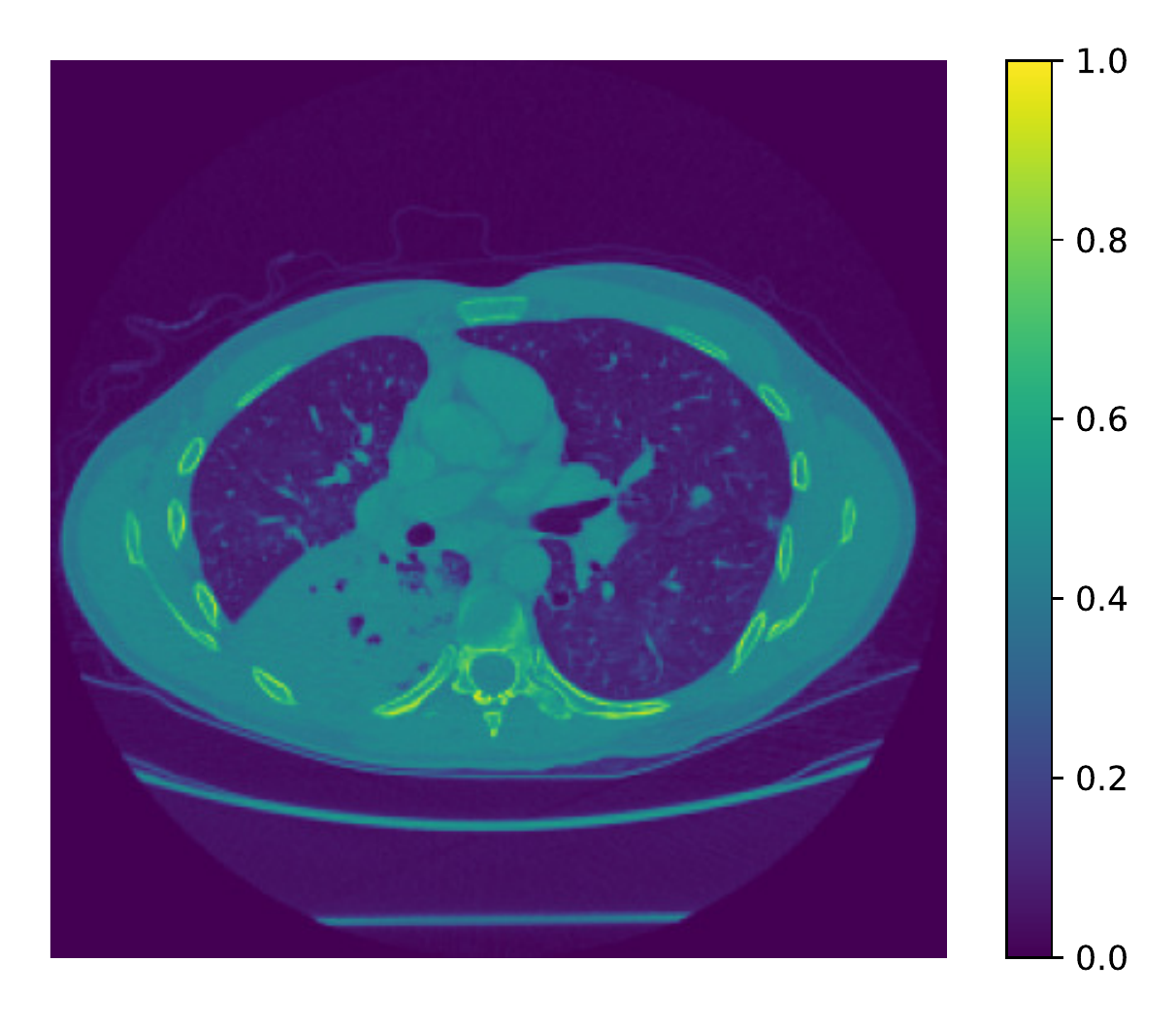}}}
	\subfigure[]{\scalebox{0.4}[0.4]{\includegraphics{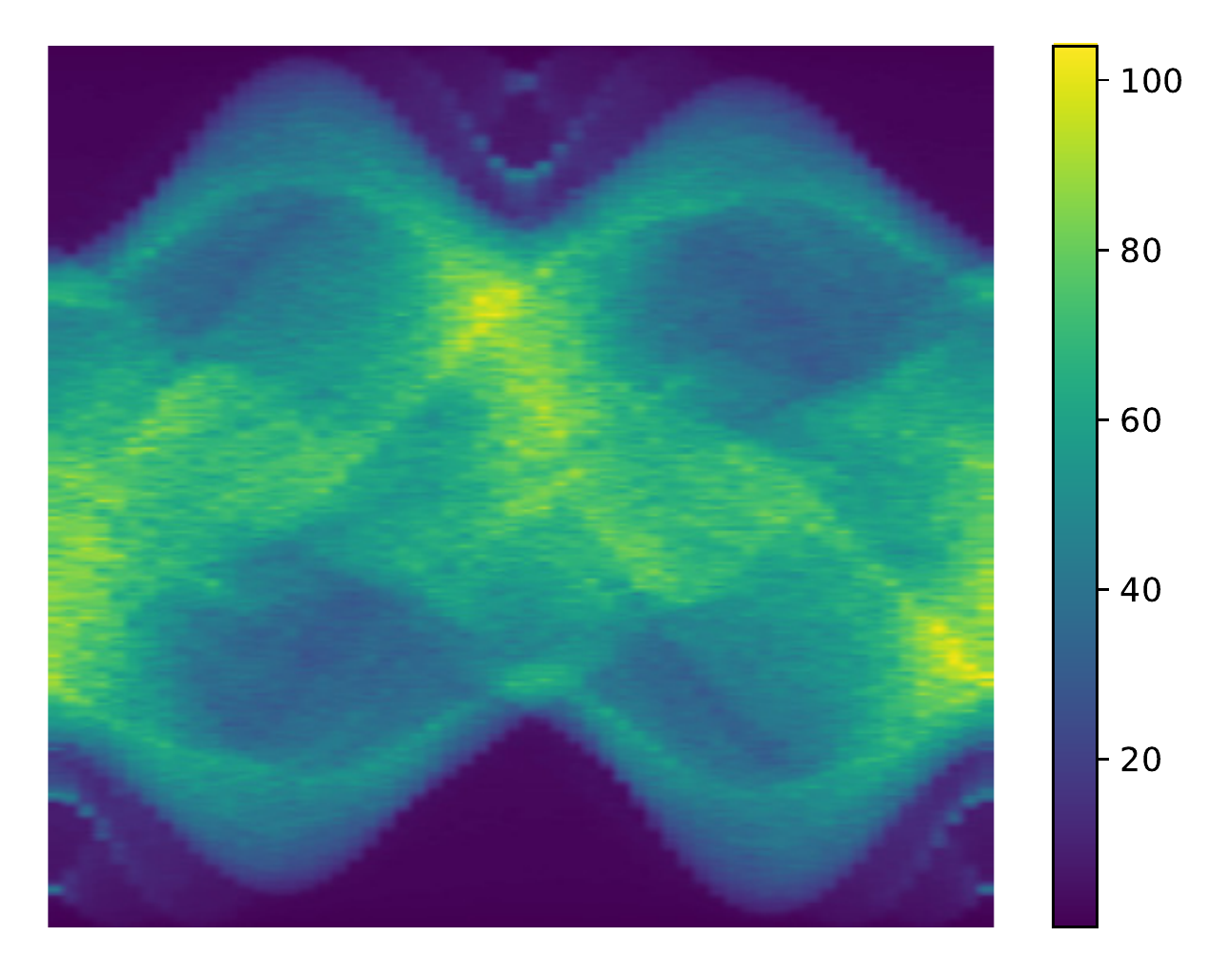}}}\\
	\subfigure[]{\scalebox{0.4}[0.4]{\includegraphics{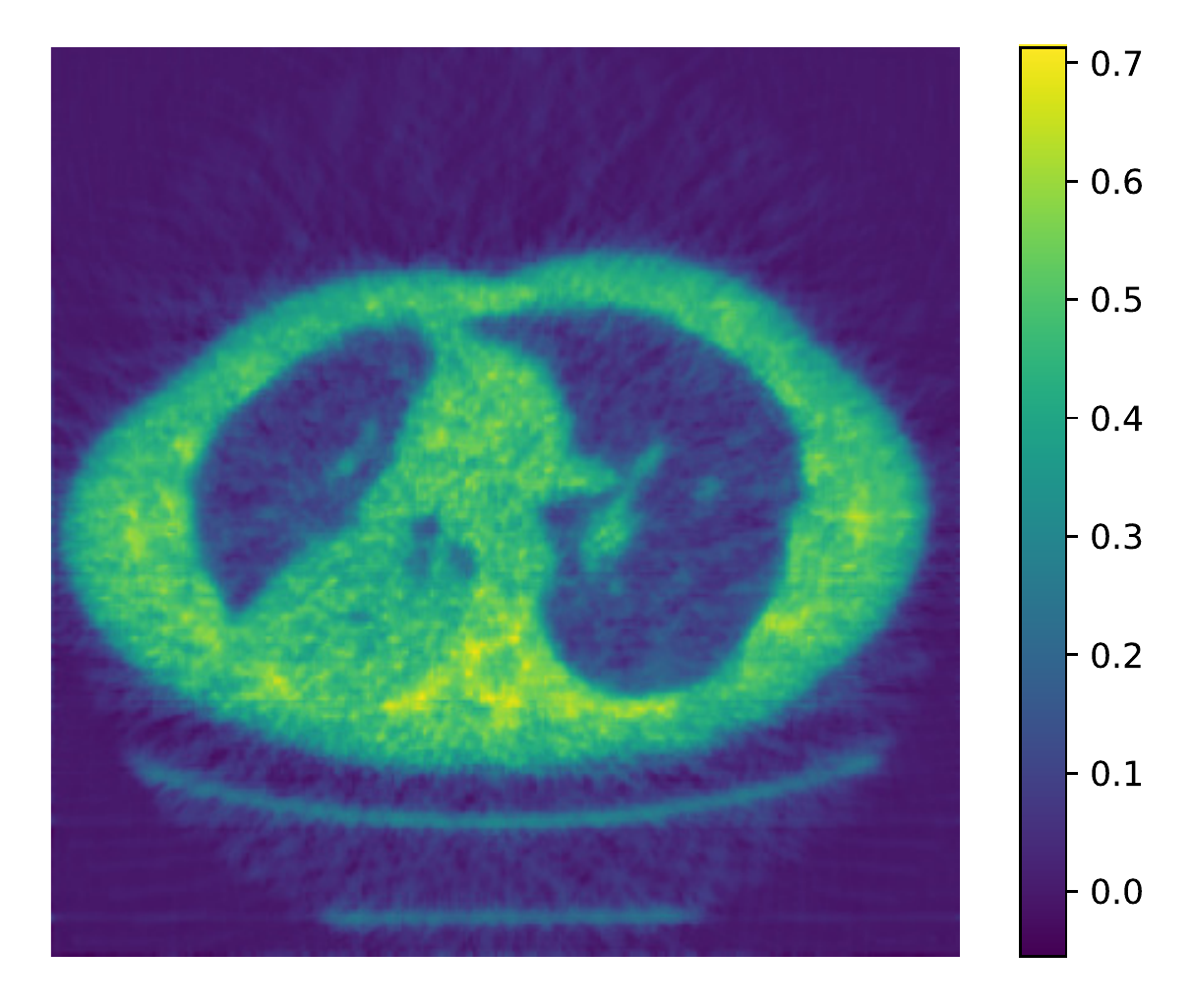}}}
	\subfigure[]{\scalebox{0.4}[0.4]{\includegraphics{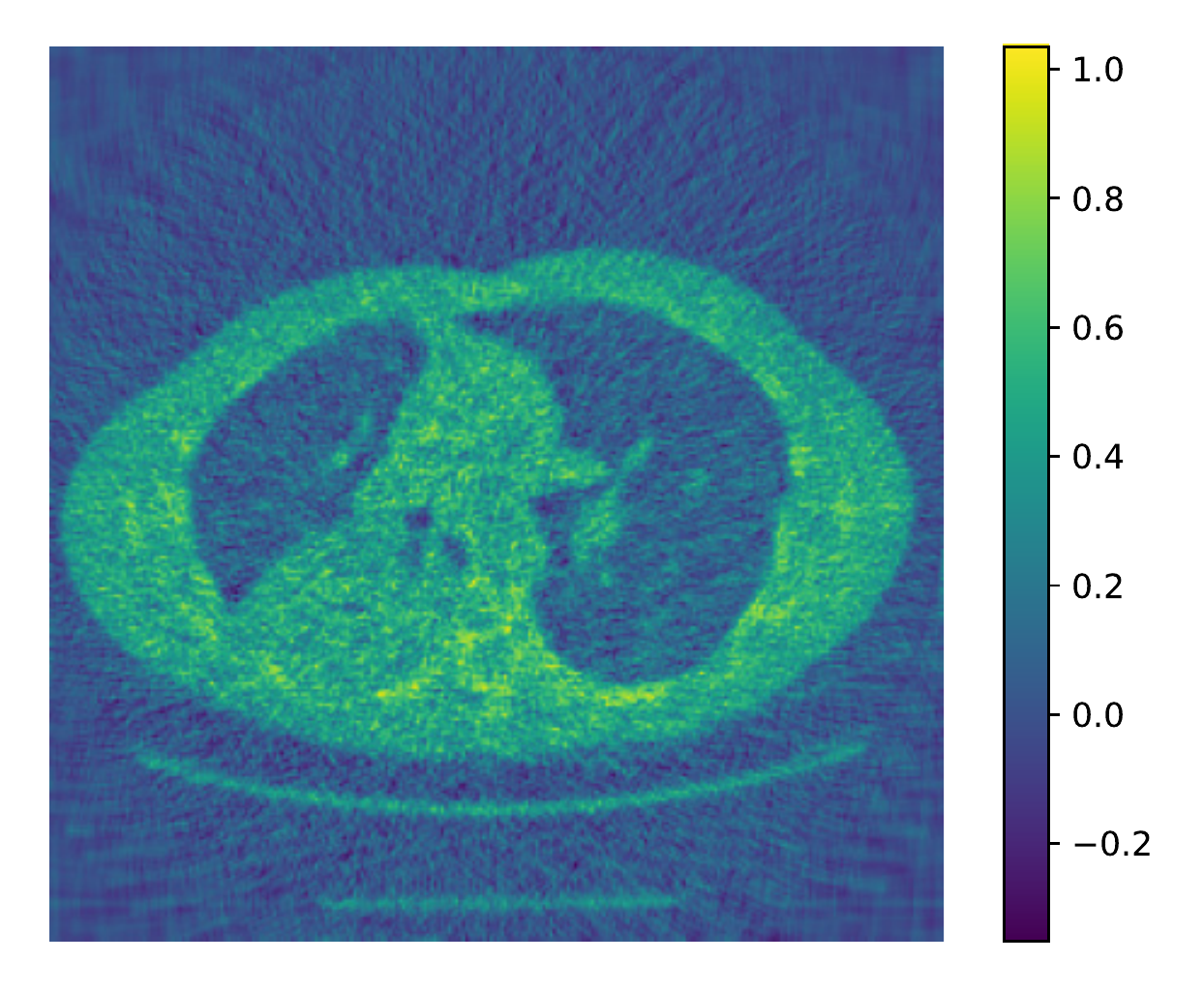}}}
	\caption{Example 3: myocardium imaging. (a) Attenuation image of myocardium; (b) measurement data with $5\%$ Gaussian noises; (c) reconstruction result by \ref{iterNETT3}; (d) reconstruction result by ART.} \label{Fig12}
\end{figure}

In addition, the values of PSNR and SSIM for these reconstruction results are listed in Table \ref{TabEX3}. It shows that the \ref{iterNETT3} algorithm provides much better solutions in both lung imaging and myocardium imaging, removing artifacts in the reconstructed images and achieving higher values of PSNR and SSIM. In practical applications, one can further improve the performance of \ref{iterNETT3} by re-training the convex neural network $\Phi^c_{\Theta}$ according to the types of attenuation images to be reconstructed, e.g. using images of lungs to train $\Phi^c_{\Theta}$ in lung imaging and using images of myocardia to train $\Phi^c_{\Theta}$ in myocardium imaging. In this example, we use the convex U-net trained on synthetic phantoms for the reconstruction of lung and myocardium images, which is partly due to the lack of appropriate training data, and partly to illustrate the generalization ability of the \ref{iterNETT3} algorithm.

	\begin{table}[H]
		\centering
		\begin{tabular}{ccc | cc} 
			\toprule
			\multirow{2}{*}{} & \multicolumn{2}{c}{\begin{tabular}[c]{@{}c@{}} lung imaging \end{tabular} }  & \multicolumn{2}{c}{\begin{tabular}[c]{@{}c@{}} myocardium imaging \end{tabular}}       \\ 
			\cmidrule(lr){2-3}\cmidrule(lr){4-5}
			\hspace{4pt}           &            iNETT  &  ART    \hspace{2.5pt}      &      \hspace{2.5pt}      iNETT  &  ART                                           \\ 
			\midrule
			PSNR      \hspace{4pt}        &            \textbf{20.02}   &  16.46    \hspace{2.5pt}      &      \hspace{2.5pt}        \textbf{19.95} & 16.08                                              \\
			SSIM      \hspace{4pt}        &             \textbf{0.70}    &  0.51       \hspace{2.5pt}      &      \hspace{2.5pt}         \textbf{0.69}  & 0.46                                                  \\
			\bottomrule
		\end{tabular}\caption{(Example 3) List of PSNR and SSIM for the reconstruction results.}
		\label{TabEX3}
	\end{table}

\section{Conclusion}\label{sec:conclusions}
We have proposed a non-stationary iterated network Tikhonov (iNETT) method for the solution of ill-posed inverse problems. Motivated by the network Tikhonov (NETT) method \cite{li2020nett}, we employ a data-driven regularization term including a neural network architecture, where the neural network is trained to penalize artifacts in the recovered solution. The iNETT method is then developed by combining the strategy of neural-network based regularization with the non-stationary iterated Tikhonov method. The main advantage of iNETT is that it avoids the difficult task of estimating the optimal regularization parameter, while keeps the virtue of the data-driven regularizer in NETT. We provide numerical examples to illustrate applications of iNETT in 2D computerized tomography. It shows that the iNETT method can achieve comparable (if not better) results as NETT without tuning the regularization parameter exhaustively.

To achieve the theoretical convergence of iNETT, we require that the neural-network based regularizer in iNETT is uniformly convex. As a result, we develop theories and algorithms for the construction of convex and uniformly convex neural networks. Given a general neural network, we prescribe sufficient conditions to achieve a related neural network which is component-wise convex or uniformly convex. Our formulation can embrace state-of-the-art convolutional neural networks. As a concrete example, we provide rigorous formulas for the U-net architecture, and explain the approaches to obtain convex and uniformly convex U-net architectures, which are successfully used in iNETT for the computations of 2D computerized tomography. The tool of convex and uniformly convex neural networks shall have many interesting applications in the future study.

For the iNETT method itself, there are several directions to further improve the algorithm. (1) Study more efficient neural network architectures with different data-fidelity norms, which can be more stable under random noises with non-Gaussian distributions, e.g. the $\ell^p$-$\ell^q$ norm regularization \cite{buccini2020modulus}. \correct{(2) Test alternative training strategies for the uniformly convex neural network regularizer, such as the adversarial strategy in \cite{lunz2018adversarial,mukherjee2020learned}.} (3) Learn the sequence $\{\alpha_n\}_n$ in iNETT, mimicking  the procedure developed in \cite{bertocchi2020deep}. (4) Devise nonlocal differential-like operators  in the regularization term to enforce some a-priori knowledge on the recovered solution \cite{gilboa2009nonlocal,bianchi2021graph,bianchi2022graph}. \correct{(5) Generalize iNETT to a fully learned network architecture, in the same spirit of \cite{sun2016deep,zhang2018ista}, e.g. learn the neural network regularizer $\Reg(\bx)$ in the optimization process.}

\section*{Acknowledgments}
\correct{Davide Bianchi is supported by NSFC (grant no. 12250410253). Wenbin Li is supported by Natural Science Foundation of Shenzhen (grant no. JCYJ20190806144005645) and NSFC (grant no. 41804096).}

\newpage
\printbibliography

@article{liwanfan22,
  title={A stochastic gradient descent approach with partitioned-truncated singular value decomposition for large-scale inverse problems of magnetic modulus data},
  author={Li, Wenbin and Wang, Kangzhi and Fan, Tingting},
  journal={Inverse Problems},
  volume={38},
  number={7},
  pages={075002},
  year={2022},
  publisher={IOP Publishing}
}

@article{gorbenher70,
  title={Algebraic reconstruction techniques ({ART}) for three-dimensional electron microscopy and {X}-ray photography},
  author={Gordon, Richard and Bender, Robert and Herman, Gabor T},
  fjournal={Journal of theoretical Biology},
  journal={J. Theoret. Biol.},
  volume={29},
  number={3},
  pages={471--481},
  year={1970},
  publisher={Academic Press},
  doi={10.1016/0022-5193(70)90109-8}
}

@InProceedings{safsha20,
	title = 	 {How Good is {SGD} with Random Shuffling?},
	author =       {Safran, Itay and Shamir, Ohad},
	booktitle = 	 {Proceedings of Thirty Third Conference on Learning Theory},
	pages = 	 {3250--3284},
	year = 	 {2020},
	editor = 	 {Abernethy, Jacob and Agarwal, Shivani},
	volume = 	 {125},
	series = 	 {Proceedings of Machine Learning Research},
	publisher =    {PMLR}
}

@misc{kinba14,
  title={Adam: A method for stochastic optimization},
  author={Kingma, Diederik P and Ba, Jimmy},
  eprint={1412.6980},
  year={2014},
  archivePrefix={arXiv},
  primaryClass={cs.LG}
}

@article{li2020nett,
	title={NETT: Solving inverse problems with deep neural networks},
	author={Li, Housen and Schwab, Johannes and Antholzer, Stephan and Haltmeier, Markus},
	fjournal={Inverse Problems},
	journal={Inverse Problems},
	volume={36},
	number={6},
	pages={065005},
	year={2020},
	publisher={IOP Publishing},
	doi = {10.1088/1361-6420/ab6d57},
}

@article{jin2014nonstationary,
	title={Nonstationary iterated Tikhonov regularization in Banach spaces with uniformly convex penalty terms},
	author={Jin, Qinian and Zhong, Min},
    fjournal={Numerische Mathematik},
    journal={Numer. Math.},
	volume={127},
	number={3},
	pages={485--513},
	year={2014},
	publisher={Springer},
	doi = {10.1007/s00211-013-0594-9},
}

@article{aggarwal2018modl,
	title={MoDL: Model-based deep learning architecture for inverse problems},
	author={Aggarwal, Hemant K and Mani, Merry P and Jacob, Mathews},
	fjournal={IEEE transactions on medical imaging},
	journal={IEEE Trans. Med. Imag.},
	volume={38},
	number={2},
	pages={394--405},
	year={2018},
	publisher={IEEE},
	doi = {10.1109/TMI.2018.2865356},
}

@article{sun2016deep,
	title={Deep ADMM-Net for compressive sensing MRI},
	author={Sun, Jian and Li, Huibin and Xu, Zongben and others},
	journal={Advances in neural information processing systems},
	volume={29},
	year={2016}
}

@book{scherzer2009variational,
	title={Variational methods in imaging},
	author={Scherzer, Otmar and Grasmair, Markus and Grossauer, Harald and Haltmeier, Markus and Lenzen, Frank},
	year={2009},
	publisher={Springer},
	doi = {10.1007/978-0-387-69277-7}
}

@article{jin2012nonstationary,
	title={Nonstationary iterated Tikhonov regularization for ill-posed problems in Banach spaces},
	author={Jin, Qinian and Stals, Linda},
	fjournal={Inverse Problems},
	journal={Inverse Problems},
	volume={28},
	number={10},
	pages={104011},
	year={2012},
	publisher={IOP Publishing},
	doi={10.1088/0266-5611/28/10/104011}
}

@book{engl1996regularization,
	title={Regularization of inverse problems},
	author={Engl, Heinz Werner and Hanke, Martin and Neubauer, Andreas},
	volume={375},
	year={1996},
	publisher={Springer Science \& Business Media}
}

@article{hanke1998nonstationary,
	title={Nonstationary iterated Tikhonov regularization},
	author={Hanke, Martin and Groetsch, Charles W},
	fjournal={Journal of Optimization Theory and Applications},
	journal={J. Optim. Theory Appl.},
	volume={98},
	number={1},
	pages={37--53},
	year={1998},
	publisher={Springer},
	doi={10.1023/A:1022680629327}
}

@article{engl1987choice,
	title={On the choice of the regularization parameter for iterated Tikhonov regularization of III-posed problems},
	author={Engl, Heinz W},
	fjournal={Journal of approximation theory},
	journal={J. Approx. Theory},
	volume={49},
	number={1},
	pages={55--63},
	year={1987},
	publisher={Elsevier},
	doi={10.1016/0021-9045(87)90113-4}
}

@article{buccini2017iterated,
	title={Iterated Tikhonov regularization with a general penalty term},
	author={Buccini, Alessandro and Donatelli, Marco and Reichel, Lothar},
	fjournal={Numerical Linear Algebra with Applications},
	journal={Numer. Linear Algebra Appl.},
	volume={24},
	number={4},
	pages={e2089},
	year={2017},
	publisher={Wiley Online Library},
	doi={10.1002/nla.2089}
}

@article{bianchi2017generalized,
	title={On generalized iterated Tikhonov regularization with operator-dependent seminorms},
	author={Bianchi, Davide and Donatelli, Marco},
	year={2017},
	fjournal ={Electronic Transactions on Numerical Analysis},
	journal={ETNA},
	volume={47},
	pages={73--99}
}

@article{bianchi2015iterated,
	title={Iterated fractional Tikhonov regularization},
	author={Bianchi, Davide and Buccini, Alessandro and Donatelli, Marco and Serra-Capizzano, Stefano},
	fjournal={Inverse Problems},
	journal={Inverse Problems},
	volume={31},
	number={5},
	pages={055005},
	year={2015},
	publisher={IOP Publishing},
	doi={10.1088/0266-5611/31/5/055005}
}

@article{hansen1992analysis,
	title={Analysis of discrete ill-posed problems by means of the L-curve},
	author={Hansen, Per Christian},
	fjournal={SIAM review},
	journal={SIAM Rev.},
	volume={34},
	number={4},
	pages={561--580},
	year={1992},
	publisher={SIAM},
	doi={10.1137/1034115}
}

@article{reichel2013old,
	title={Old and new parameter choice rules for discrete ill-posed problems},
	author={Reichel, Lothar and Rodriguez, Giuseppe},
	fjournal={Numerical Algorithms},
	journal={Numer. Algorithms},
	volume={63},
	number={1},
	pages={65--87},
	year={2013},
	publisher={Springer},
	doi={10.1007/s11075-012-9612-8}
}

@article{hanke1996limitations,
	title={Limitations of the L-curve method in ill-posed problems},
	author={Hanke, Martin},
	fjournal={BIT Numerical Mathematics},
	journal={BIT},
	volume={36},
	number={2},
	pages={287--301},
	year={1996},
	publisher={Springer},
	doi={10.1007/BF01731984}
}

@book{rockafellar2009variational,
	title={Variational analysis},
	author={Rockafellar, R Tyrrell and Wets, Roger J-B},
	series={Grundlehren der mathematischen Wissenschaften},
	volume={317},
	year={1998},
	edition={Corrected 3rd printing 2009},
	publisher={Springer, Berlin, Heidelberg},
	doi={10.1007/978-3-642-02431-3}
}

@article{chen2017low,
	title={Low-dose CT via convolutional neural network},
	author={Chen, Hu and Zhang, Yi and Zhang, Weihua and Liao, Peixi and Li, Ke and Zhou, Jiliu and Wang, Ge},
	fjournal={Biomedical optics express},
	journal={Biomed. Opt. Express},
	volume={8},
	issue={2},
	pages={679--694},
	year={2017},
	publisher={Optical Society of America},
	doi={10.1364/BOE.8.000679}
}

@article{donatelli2012nondecreasing,
	title={On nondecreasing sequences of regularization parameters for nonstationary iterated Tikhonov},
	author={Donatelli, Marco},
	fjournal={Numerical Algorithms},
	journal={Numer. Algor.},
	volume={60},
	pages={651--668},
	year={2012},
	doi={10.1007/s11075-012-9593-7},
	publisher={Springer}
}

@article{morotti2021green,
	title={A Green Prospective for Learned Post-Processing in Sparse-View Tomographic Reconstruction},
	author={Morotti, Elena and Evangelista, Davide and Loli Piccolomini, Elena},
	fjournal={Journal of Imaging},
	journal={J. Imaging},
	volume={7},
	issue={8},
	number={139},
	pages={1--14},
	year={2021},
	doi={10.3390/jimaging7080139},
	publisher={Multidisciplinary Digital Publishing Institute}
}

@article{bianchi2022graph,
	title={Graph Laplacian for image deblurring},
	author={Bianchi, Davide and Buccini, Alessandro and Donatelli, Marco and Randazzo, Emma},
	fjournal={Electronic Transaction on Numerical Analysis},
	volume={55},
	pages={169--186},
	journal={ETNA},
	year={2022},
	doi={10.1553/etna_vol55s169}
}

@article{higham2019deep,
	title={Deep Learning: An Introduction for Applied Mathematicians},
	author={Higham, Catherine F and Higham, Desmond J},
	fjournal={SIAM Review},
	journal={SIAM Rev.},
	volume={61},
	number={4},
	pages={860--891},
	year={2019},
	publisher={SIAM},
	doi={10.1137/18M1165748}
}

@inproceedings{he2016deep,
	title={Deep residual learning for image recognition},
	author={He, Kaiming and Zhang, Xiangyu and Ren, Shaoqing and Sun, Jian},
	fbooktitle={Proceedings of the IEEE conference on computer vision and pattern recognition},
	booktitle={Proc. IEEE Comput. Soc. Conf. Comput. Vis. Pattern Recognit. (CVPR)},
	pages={770--778},
	year={2016}
}

@inproceedings{huang2017densely,
	title={Densely connected convolutional networks},
	author={Huang, Gao and Liu, Zhuang and Van Der Maaten, Laurens and Weinberger, Kilian Q},
	fbooktitle={Proceedings of the IEEE conference on computer vision and pattern recognition},
	booktitle={Proc. IEEE Comput. Soc. Conf. Comput. Vis. Pattern Recognit. (CVPR)},
	pages={4700--4708},
	year={2017}
}

@inproceedings{ronneberger2015u,
	title={U-net: Convolutional networks for biomedical image segmentation},
	author={Ronneberger, Olaf and Fischer, Philipp and Brox, Thomas},
	fbooktitle={International Conference on Medical image computing and computer-assisted intervention},
	booktitle={Proc. Med. Image Comput. Comput. Assist. Interv.},
	pages={234--241},
	year={2015},
	organization={Springer},
	doi={10.1007/978-3-319-24574-4_28}
}

@book{zalinescu2002convex,
	title={Convex analysis in general vector spaces},
	author={Z\u{a}linescu, Constantin},
	year={2002},
	publisher={World scientific}
}

@article{zalinescu1983uniformly,
	title={On uniformly convex functions},
	author={Z\u{a}linescu, Constantin},
	fjournal={Journal of Mathematical Analysis and Applications},
	journal={J. Math. Anal. Appl.},
	volume={95},
	number={2},
	pages={344--374},
	year={1983},
	publisher={Elsevier},
	doi={10.1016/0022-247X(83)90112-9}
}

@InProceedings{amos2017input,
	title={Input Convex Neural Networks},
	author={Amos, Brandon and Xu, Lei and Kolter, J Zico},
	booktitle={Proceedings of the 34th International Conference on Machine Learning},
	pages={146--155},
	year={2017},
	editor={Precup, Doina and Teh, Yee Whye},
	volume={70},
	series={Proceedings of Machine Learning Research},
	publisher={PMLR}
}

@article{borwein2009uniformly,
	title={Uniformly convex functions on Banach spaces},
	author={Borwein, Jonathan and Guirao, A and H{\'a}jek, Petr and Vanderwerff, Jon},
	fjournal={Proceedings of the American Mathematical Society},
	journal={Proc. Amer. Math. Soc.},
	volume={137},
	number={3},
	pages={1081--1091},
	year={2009},
	doi={10.1090/S0002-9939-08-09630-5}
	}

@article{arridge2019solving,
	title={Solving inverse problems using data-driven models},
	author={Arridge, Simon and Maass, Peter and {\"O}ktem, Ozan and Sch{\"o}nlieb, Carola-Bibiane},
	fjournal={Acta Numerica},
	journal={Acta Numer.},
	volume={28},
	pages={1--174},
	year={2019},
	publisher={Cambridge University Press},
	doi={10.1017/S0962492919000059}
}

@article{antholzer2021discretization,
	title={Discretization of learned NETT regularization for solving inverse problems},
	author={Antholzer, Stephan and Haltmeier, Markus},
	fjournal={Journal of Imaging},
	journal={J. Imaging},
	volume={7},
	issue={11},
	number={239},
	pages={1--13},
	year={2021},
	publisher={Multidisciplinary Digital Publishing Institute},
	doi={10.3390/jimaging7110239}
}

@inproceedings{antholzer2019nett,
	title={NETT regularization for compressed sensing photoacoustic tomography},
	author={Antholzer, Stephan and Schwab, Johannes and Bauer-Marschallinger, Johnnes and Burgholzer, Peter and Haltmeier, Markus},
	booktitle={Photons Plus Ultrasound: Imaging and Sensing 2019},
	volume={10878},
	pages={272--282},
	year={2019},
	organization={SPIE},
	doi={10.1117/12.2508486}
}

@article{obmann2021augmented,
	title={Augmented NETT regularization of inverse problems},
	author={Obmann, Daniel and Nguyen, Linh and Schwab, Johannes and Haltmeier, Markus},
	fjournal={Journal of Physics Communications},
	journal={J. Phys. Commun.},
	volume={5},
	number={10},
	pages={105002},
	year={2021},
	publisher={IOP Publishing},
	doi={10.1088/2399-6528/ac26aa}
}

@inproceedings{sivaprasad2021curious,
	title={The curious case of convex neural networks},
	author={Sivaprasad, Sarath and Singh, Ankur and Manwani, Naresh and Gandhi, Vineet},
	booktitle={Joint European Conference on Machine Learning and Knowledge Discovery in Databases},
	pages={738--754},
	year={2021},
	organization={Springer},
	doi={10.1007/978-3-030-86486-6_45}
}

@article{ao2021data,
	title={A data and knowledge driven approach for SPECT using convolutional neural networks and iterative algorithms},
	author={Ao, Wenqi and Li, Wenbin and Qian, Jianliang},
	fjournal={Journal of Inverse and Ill-posed Problems},
	journal={J. Inverse Ill-Posed Probl.},
	volume={29},
	number={4},
	pages={543--555},
	year={2021},
	publisher={De Gruyter},
	doi={10.1515/jiip-2020-0056}
}

@book{andrews1977digital,
	title={Digital image restoration},
	author={Andrews, Harry C and Hunt, Boby Ray},
	year={1977},
	publisher={Prentice-Hall}
}

@inproceedings{ioffe2015batch,
	title={Batch normalization: Accelerating deep network training by reducing internal covariate shift},
	author={Ioffe, Sergey and Szegedy, Christian},
	booktitle={International conference on machine learning},
	pages={448--456},
	year={2015},
	organization={PMLR}
}

@book{lindenstrauss2013classical,
	title={Classical Banach spaces II: function spaces},
	author={Lindenstrauss, Joram and Tzafriri, Lior},
	volume={97},
	year={2013},
	publisher={Springer Science \& Business Media}
}

@misc{tan2022data,
	title={Data-Driven Mirror Descent with Input-Convex Neural Networks},
	author={Tan, Hong Ye and Mukherjee, Subhadip and Tang, Junqi and Sch{\"o}nlieb, Carola-Bibiane},
	eprint={2206.06733},
	year={2022},
	archivePrefix={arXiv},
	primaryClass={math.OC}
}

@article{osher2005iterative,
	title={An iterative regularization method for total variation-based image restoration},
	author={Osher, Stanley and Burger, Martin and Goldfarb, Donald and Xu, Jinjun and Yin, Wotao},
	fjournal={Multiscale Modeling \& Simulation},
	journal={Multiscale Model. Simul.},
	volume={4},
	number={2},
	pages={460--489},
	year={2005},
    doi={10.1137/040605412}
}

@article{bachmayr2009iterative,
	title={Iterative total variation schemes for nonlinear inverse problems},
	author={Bachmayr, Markus and Burger, Martin},
	fjournal={Inverse Problems},
	journal={Inverse Problems},
	volume={25},
	pages={105004},
	year={2009},
	doi={10.1088/0266-5611/25/10/105004},
	publisher={IOP Publishing}
}

@article{bertocchi2020deep,
	title={Deep unfolding of a proximal interior point method for image restoration},
	author={Bertocchi, Carla and Chouzenoux, Emilie and Corbineau, Marie-Caroline and Pesquet, Jean-Christophe and Prato, Marco},
	fjournal={Inverse Problems},
	journal={Inverse Problems},
	volume={36},
	number={3},
	pages={034005},
	year={2020},
	doi={10.1088/1361-6420/ab460a},
	publisher={IOP Publishing}
}

@misc{mukherjee2020learned,
	title={Learned convex regularizers for inverse problems},
	author={Mukherjee, Subhadip and Dittmer, S{\"o}ren and Shumaylov, Zakhar and Lunz, Sebastian and {\"O}ktem, Ozan and Sch{\"o}nlieb, Carola-Bibiane},
	eprint={2008.02839},
	year={2020},
	archivePrefix={arXiv},
	primaryClass={cs.LG}
}

@article{heinrich2018residual,
	title={Residual U-net convolutional neural network architecture for low-dose CT denoising},
	author={Heinrich, Mattias P and Stille, Maik and Buzug, Thorsten M},
	fjournal={Current Directions in Biomedical Engineering},
	journal={Curr. Dir. Biomed. Eng.},
	volume={4},
	number={1},
	pages={297--300},
	year={2018},
	publisher={De Gruyter},
	doi={10.1515/cdbme-2018-0072}
}

@article{wei2018deep,
	title={Deep-learning schemes for full-wave nonlinear inverse scattering problems},
	author={Wei, Zhun and Chen, Xudong},
	fjournal={IEEE Transactions on Geoscience and Remote Sensing},
	journal={IEEE Trans. Geosci. Remote Sensing},
	volume={57},
	number={4},
	pages={1849--1860},
	year={2018},
	publisher={IEEE},
	doi={10.1109/TGRS.2018.2869221}
}

@article{antholzer2019deep,
	title={Deep learning for photoacoustic tomography from sparse data},
	author={Antholzer, Stephan and Haltmeier, Markus and Schwab, Johannes},
	fjournal={Inverse problems in science and engineering},
	journal={Inverse Probl. Sci. Eng.},
	volume={27},
	number={7},
	pages={987--1005},
	year={2019},
	publisher={Taylor \& Francis},
	doi={10.1080/17415977.2018.1518444}
}

@article{obmann2020deep,
	title={Deep synthesis network for regularizing inverse problems},
	author={Obmann, Daniel and Schwab, Johannes and Haltmeier, Markus},
	fjournal={Inverse Problems},
	journal={Inverse Problems},
	volume={37},
	number={1},
	pages={015005},
	year={2020},
	publisher={IOP Publishing},
	doi={10.1088/1361-6420/abc7cd}
}

@misc{dumoulin2016guide,
	title={A guide to convolution arithmetic for deep learning},
	author={Dumoulin, Vincent and Visin, Francesco},
	eprint={1603.07285},
	archivePrefix={arXiv},
	primaryClass={stat.ML},
	year={2016}
}

@article{buccini2020modulus,
	title={Modulus-based iterative methods for constrained $\ell^p$--$\ell^q$ minimization},
	author={Buccini, Alessandro and Pasha, Mirjeta and Reichel, Lothar},
	fjournal={Inverse Problems},
	journal={Inverse Problems},
	volume={36},
	number={8},
	pages={084001},
	year={2020},
	publisher={IOP Publishing},
	doi={10.1088/1361-6420/ab9f86}
}

@inproceedings{bianchi2021graph,
	title={Graph approximation and generalized Tikhonov regularization for signal deblurring},
	author={Bianchi, Davide and Donatelli, Marco},
	booktitle={2021 21st International Conference on Computational Science and Its Applications (ICCSA)},
	pages={93--100},
	year={2021},
	organization={IEEE},
	doi={10.1109/ICCSA54496.2021.00023}
}

@article{gilboa2009nonlocal,
	title={Nonlocal operators with applications to image processing},
	author={Gilboa, Guy and Osher, Stanley},
	fjournal={Multiscale Modeling \& Simulation},
	journal={Multiscale Model. Simul.},
	volume={7},
	number={3},
	pages={1005--1028},
	year={2009},
	publisher={SIAM},
	doi={10.1137/070698592}
}

@article{cai2016regularization,
	title={Regularization preconditioners for frame-based image deblurring with reduced boundary artifacts},
	author={Cai, Yuantao and Donatelli, Marco and Bianchi, Davide and Huang, Ting-Zhu},
	fjournal={SIAM Journal on Scientific Computing},
	journal={SIAM J. Sci. Comput.},
	volume={38},
	number={1},
	pages={B164--B189},
	year={2016},
	publisher={SIAM},
	doi={10.1137/140976261}
}

@inproceedings{zhang2018ista,
	title={ISTA-Net: Interpretable optimization-inspired deep network for image compressive sensing},
	author={Zhang, Jian and Ghanem, Bernard},
	booktitle={Proceedings of the IEEE conference on computer vision and pattern recognition},
	pages={1828--1837},
	year={2018}
}

@article{lunz2018adversarial,
	title={Adversarial regularizers in inverse problems},
	author={Lunz, Sebastian and {\"O}ktem, Ozan and Sch{\"o}nlieb, Carola-Bibiane},
	journal={Advances in neural information processing systems},
	volume={31},
	year={2018}
}
\end{document}